\newcommand\second{2\textsuperscript{nd}}
\newcommand\quor {\quad \lor  \quad}
\newcommand\alter[1]{\p{-1}^{#1}}
\newcommand\spaceas[3][l]{
    \makebox[\widthof{#3}][#1]{#2}
}
\newcommand\spacemath[3][r]{
    \spaceas[#1]{$#2$}{$#3$}
}
\newcommand\spaceby[3][=]{
    \spacemath[c]
    {\by[#1]{#2}}
    {\by[#1]{#3}}
}
\newcommand\empar[2][{}]{%
\renewcommand\i{\imath}
\renewcommand\j{\jmath}
\newcommand\textif{\text{if }}
\newcommand\textelse{\text{else}}
\newcommand\even{\mathrm{even}}
\newcommand\odd {\mathrm{odd}}
\newcommand\iseven[1][{}]{
\newcommand\isodd [1][{}]{
\newcommand\abc[1][2]{\mathcal A_{#1}}
\newcommand\zero{\bm 0}
\newcommand\one{\bm 1}
\newcommand\cp[3]{#1 \p{#2 \p #3}}
\newcommand\slice[3]{#1_{\hopen{#2, #3}}}
\newcommand\lsq{l^2}
\newcommand\rsq{r^2}
\newcommand\sft{sft}
\newcommand\crlzero[1]{\crl \zero ^ {#1}}
\newcommand\crlone [1]{\crl \one  ^ {#1}}
\newcommand\crlT   [1]{\crl T     ^ {#1}}
\newcommand\crlCT  [1]{\crl {CT}  ^ {#1}}
\newcommand\graphicheight{%
    0.33\textheight
}
\newcommand\blue[1]{{\color{blue}{#1}}}
\newcommand\red[1]{{\color{red}{#1}}}
\newcommand\green[1]{{\color{green}{#1}}}
\theoremstyle{plain}
\newtheorem{thrm}{Theorem}[section]
\newtheorem{lemma}[thrm]{Lemma}
\newtheorem{corll}[thrm]{Corollary}
\newtheorem{observ}[thrm]{Observation}
\newtheorem{fact}[thrm]{Fact}
\theoremstyle{definition}
\newtheorem*{defn}{Definition}
\newtheorem*{notation}{Notation}
\theoremstyle{remark}
\newtheorem*{remark}{Remark}
\newtheorem*{ex}{Example}
\newtheorem*{countex}{Counterexample}
\numberwithin{equation}{section}
\title{An alternating colouring function on strings}
\author{Jonathan Garbe\\Lund University}
\date{2024}
\begin{document}
    \pagenumbering{roman}
    	\maketitle
    	\begin{abstract}

An alternating colouring function is defined on strings over the alphabet $\crl{\zero, \one}$. It divides the strings in colourable and non-colourable ones. The points in the subshift of finite type defined by forbidding all non-colourable strings of a certain length alternate between states of one colour and states of the other colour. In other words, the points in the \second{} power shifts all have the same colour. 

The number $K_n$ of non-colourable strings of length $n \ge 2$ is shown to be $2 \cdot \p{J_{n-2} + 1}$ where $J$ is the sequence of Jacobsthal numbers. The number of sources and sinks in the de Bruijn graph of dimension $n \ge 3$ with non-colourable edges removed is shown each to be $K_n - 4$. 
    	\end{abstract} 
    	\clearpage
        \tableofcontents
        \addsec{Preface}

This paper is an extract from my master thesis (\cite{20garbe}) that I wrote during the spring semester 2020. As many will remember, that was when the \textsc{covid-19} pandemic hit the world. I was lucky that all I had left to complete my major was the thesis while fellow students struggled with improvised online teaching as even Swedish universities closed their doors to mitigate the spread of the virus. 

I started with a research question in the area of dynamical systems that I tried to tackle by looking at de Bruijn sequences. (They are named after Nicolaas Govert de Bruijn who \citeyear{46debruijn} published his famous result regarding their number.) After a time however I noticed some symmetries of de Bruijn graphs that I tried to formalise. In particular, I noticed that by deleting palindromes I could categorise the remaining edges such that in any path an edge of one category would follow an edge of the other. 

In the following weeks I was able to formalise that notion by defining the function~$\psi$. Theorem \ref{thrm:psilllpsirrr} shows it actually has the property I was looking for. It can therefore be seen as the most important result, but it is not surprising as $\psi$ was made to fulfil that property. However, observation \ref{observ:coliffpalin} came as a disappointment: The set of strings that $\psi$ does not assign a colour to is identical with the set of palindromes only up to a string length of $5$ and theorem \ref{thrm:ximne0} shows that the definition of $\psi$ cannot easily be improved. Corollary \ref{corll:Kn2n} instead shows that in the limit $\frac{1}{6}$ of all strings are not colourable. Surprisingly however, the number of non-colourable strings of certain length turns out to be an integer sequence so far unknown even to the On-Line Encyclopedia of Integer Sequences \parencite{oeis} that had been a great help in finding that sequence. 

While for strings of even length the definition of $\psi$ is rather straight-forward, for strings of odd length the definition first requires two other functions $\phi$ and $\xi$. Those two functions feature some remarkable relations. Theorem \ref{thrm:phi0xi0}, which establishes a relation between $\phi$ and $\xi$ being equal to zero, to me is the most surprising result of my master thesis. 

For personal reasons some time has passed between my master thesis and this paper. I beg pardon for the delay. I am grateful for my supervisor Jörg Schmeling for having introduced me into the subject and especially for my examiner Tomas Persson who has put more effort in supporting the production of my thesis and this paper than I would have dared to ask for.\\*[1em]

Cologne, Autumn 2024

\emph{Jonathan Garbe}
    \pagenumbering{arabic}
        \section{Notation}

\begin{notation}
    Define \empar[$l$, $r$, $m$, $R$, $C$, $T$, $CT$]{$l, r : \abc^{\ge 1} \to \abc^*, m : \abc^{\ge 2} \to \abc^*, R, C : \abc^* \to \abc^*, T, CT : \N \to \abc^*$} by
    \begin{align*}
        l \p w &= w_{[0, \# w - 1)}
    \\
        r \p w &= w_{[1, \# w)}
    \\
        m &= l \circ r
    \\ 
        R \p{w_0 \dots w_{\#w-1}} &= w_{\#w-1} \dots w_0
    \\
        C \p{w_0 \dots w_{\#w-1}} 
        &= C \p{w_0} \dots C \p{w_{\#w-1}}
        \text{, where } C \p \zero = \one \land C \p \one = \zero
    \\ 
        T^n &= 
        \begin{cases}
            \upepsilon & \textif n = 0
        \\
            T^{n-1} \zero & \textif n \text{ odd}
        \\
            T^{n-1} \one & \textif n \text{ even} > 0
        \end{cases}            
    \\
        CT^n &= C \p {T^n}.
    \end{align*}
\end{notation}  

\begin{remark}
    $l \p w, r \p w$ are prefix and suffix with length $\# w - 1$ of $w$. $m \p w$ is the proper infix of length $\# w - 2$. $R \p w$ is the reverse of $w$ (meaning that $w$ is a palindrome if and only if $R \p w = w$) while $C \p w$ is its complement. $T^n, CT^n$ are the strings of length $n$ starting with $\zero, \one$ respectively and alternating between $\zero$ and~$\one$.
\end{remark}

\begin{ex}
    \begin{align*}
        l \p{\bm{0010110}} &= \bm{001011}
    \\
        r \p{\bm{0010110}} &= \bm{010110}
    \\ 
        m \p{\bm{0010110}} &= \bm{01011}
    \\         
        R \p{\bm{0010110}} &= \bm{0110100}
    \\
        C \p{\bm{0010110}} &= \bm{1101001}
    \\ 
        T^7 &= \bm{0101010}
    \\ 
        CT^7 &= \bm{1010101}
    \end{align*}
\end{ex}

\begin{observ}
    \begin{align}
        l \circ r &= r \circ l
        \label{eq:lrrl}
    \\ 
        l \circ m = m \circ l &\land 
        r \circ m = m \circ r
        \label{eq:lmmlrmmr}
    \\
        R \circ l = r \circ R &\land 
        R \circ r = l \circ R
        \label{eq:RlrRRrlR}
    \\ 
        R \circ m &= m \circ R
        \label{eq:RmmR}
    \\             
        C \circ l = l \circ C &\land 
        C \circ r = r \circ C
        \label{eq:CllCCrrC}
    \\ 
        C \circ m &= m \circ C
        \label{eq:CmmC}
    \\ 
        R \circ C &= C \circ R
        \label{eq:RCCR}
    \end{align}
\end{observ}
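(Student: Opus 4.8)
The plan is to verify each identity listed in the observation by evaluating both sides on an arbitrary string $w = w_0 \dots w_{n-1}$, $n = \# w$, and comparing the outputs letter by letter; throughout, each identity is read on the common domain on which all the composite maps involved are defined (so $\# w \ge 2$, respectively $\ge 3$, according to the depth of the composition). It is convenient to split the list into the four atomic identities \eqref{eq:lrrl}, \eqref{eq:RlrRRrlR}, \eqref{eq:CllCCrrC} and \eqref{eq:RCCR}, which I would prove by direct index computation, and the three identities involving $m$ — \eqref{eq:lmmlrmmr}, \eqref{eq:RmmR} and \eqref{eq:CmmC} — which I would then deduce formally from $m = l \circ r$.

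First I would record a small slicing fact: for $0 \le a \le b \le n$ the slice $v = \slice w a b$ satisfies $\# v = b - a$ and $v_k = w_{a+k}$, so that $\slice v c d = \slice w {a+c} {a+d}$ whenever $0 \le c \le d \le b - a$. Since $l \p w = \slice w 0 {n-1}$ and $r \p w = \slice w 1 n$, applying this twice yields $l \p{r \p w} = \slice w 1 {n-1} = r \p{l \p w}$, which is \eqref{eq:lrrl}; I would also observe that this common value is by definition $m \p w$, which is handy below.

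Next, for the identities involving $R$ and $C$ I would use that $R$ reverses the index order — it carries the letter in position $j$ to position $n-1-j$ — while $C$ replaces each letter $w_j$ by $C \p{w_j}$ without touching positions. Deleting the last letter of $w$ then coincides with deleting the first letter of $R \p w$, and vice versa, which gives $R \circ l = r \circ R$ and $R \circ r = l \circ R$, that is \eqref{eq:RlrRRrlR}; and since $C$ acts letterwise it commutes with deletion from either end and with reversal, that is \eqref{eq:CllCCrrC} and \eqref{eq:RCCR}. Each of these is a one-line comparison of indexed letters.

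Finally, the identities involving $m$ come out purely algebraically. From $m = l \circ r$ and \eqref{eq:lrrl} one gets $l \circ m = l \circ l \circ r = l \circ r \circ l = m \circ l$ and $r \circ m = r \circ l \circ r = l \circ r \circ r = m \circ r$, which is \eqref{eq:lmmlrmmr}; likewise
\[
    R \circ m = R \circ l \circ r = r \circ R \circ r = r \circ l \circ R = l \circ r \circ R = m \circ R
\]
by \eqref{eq:RlrRRrlR} and \eqref{eq:lrrl}, which gives \eqref{eq:RmmR}, and the same manipulation with \eqref{eq:CllCCrrC} in place of \eqref{eq:RlrRRrlR} gives \eqref{eq:CmmC}. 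I do not expect a genuine obstacle anywhere in this: the whole argument is elementary index bookkeeping, so the only thing that needs real care is keeping the domains of definition straight and getting the index shifts right — and, should one prefer, all of it can equally be carried out by a routine induction on $\# w$.
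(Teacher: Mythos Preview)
Your verification is correct. The paper states this as an \emph{observation} and gives no proof at all --- the identities are treated as self-evident from the definitions --- so there is nothing to compare against beyond noting that your index-bookkeeping argument is a valid and complete elaboration of what the paper leaves implicit.
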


\section{The function \texorpdfstring{$\xi$}{xi}} 
\label{section:xi}

\begin{defn}
    \empar[the function $\xi$]{$\xi$}$: \abc^* \to  \crl{-1, 0, 1}$ is defined recursively by \nopagebreak
    \begin{align*}
        \xi\p w &= 
        \begin{cases}
            0  & \textif w = \upepsilon
        \\
            1  & 
            \textif w \in \crl T ^ {\even \ge 0}
        \\ 
            -1  & 
            \textif w \in \crl {CT} ^ {\even \ge 0}
        \\
            \sgn \p{\xi \p{l \p w} + \xi \p{r \p w}} & \textelse.
        \end{cases}
    \end{align*}
\end{defn}

\begin{table}
    \centering
    \begin{tabular}{c|cccccccc}
        $w$ && 
        $\upepsilon$ & 
        $\zero$ & $\one$ & 
        $\bm{00}$ & $\bm{01}$ & $\bm{10}$ & $\bm{11}$ 
    \\ \hline
        $\xi \p w$ && 
        0 & 
        0 & 0 & 
        0 & 1 & -1 & 0 
    \\ \hline \hline
        $w$ & 
        $\bm{000}$ & $\bm{001}$ & $\bm{010}$ & $\bm{011}$ & $\bm{100}$ & $\bm{101}$ & $\bm{110}$ & $\bm{111}$         
    \\ \hline
        $\xi \p w$ & 
        0 & 1 & 0 & 1 & -1 & 0 & -1 & 0    
    \end{tabular}
    \caption{$\xi \p w$ for $w \in \abc^{\le3}$}
    \label{tab:xi}
\hrulefill\end{table}

\begin{remark}
    The function $\xi$ tells on which side of the vertical axis a vertex appears in a de Bruijn graph when drawn as is done in the examples given in this thesis. In figures~\ref{fig:deBruijn1xi} to \ref{fig:deBruijn4xi} the background is coloured according to the value of $\xi$ evaluated on the vertices. The vertices $w$ for which $\xi \p w = 0$ are found in the centre. The letter $\xi$ was chosen with the word \emph{x-axis} in mind. 
\end{remark}

\begin{figure}
    \centering
    \includegraphics
    [height=\graphicheight]
    {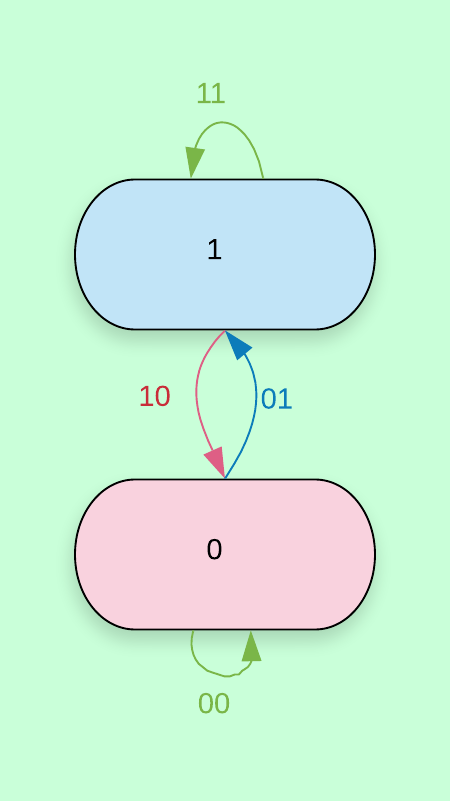}
    \caption{The $1$-dimensional de Bruijn graph. The vertex $\zero$ and the edge $\bm{10}$ are coloured red indicating that $\phi \p \zero = \phi \p{\bm{10}} = -1$ while the vertex $\one$ and the edge $\bm{01}$ are coloured blue indicating $\phi \p \one = \phi \p{\bm{01}} = 1$. The edges $\bm{00}, \bm{11}$ and the background are coloured green because $\xi \p \zero = \xi \p \one = \phi \p{\bm{00}} = \phi \p{\bm{11}} = 0$.}
    \label{fig:deBruijn1xi}
    \hrulefill
\end{figure}
\begin{figure}
    \centering
    \includegraphics
    [height=\textwidth]
    {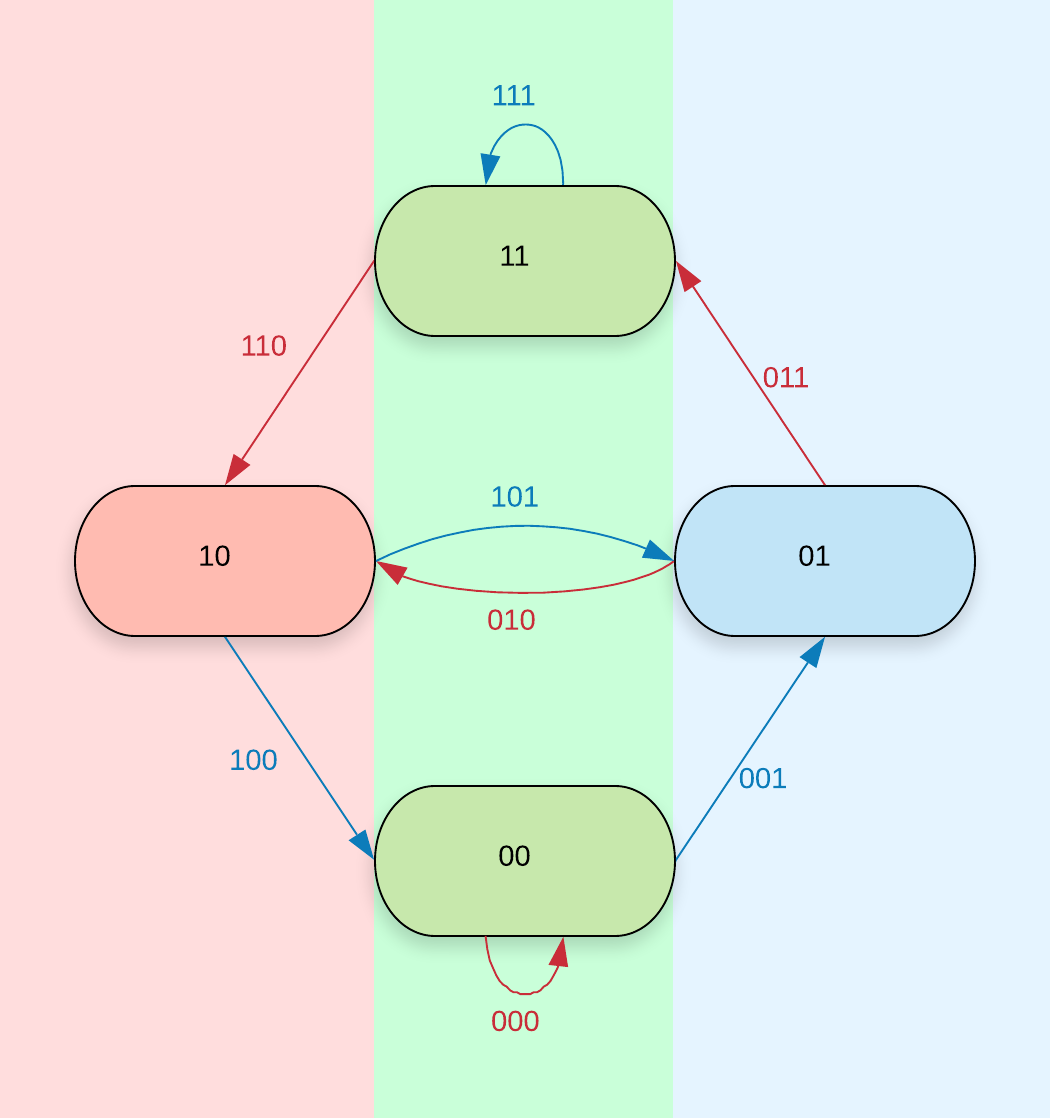}
    \caption{The $2$-dimensional de Bruijn graph. The vertices and edges are coloured red if $\phi = -1$, green if $\phi = 0$ and blue if $\phi = 1$. The background of the vertex $\bm{10}$ is painted red because $\xi \p{\bm{10}} = -1$. In the centre the background is green because $\xi \p{\bm{00}} = \xi \p{\bm{11}} = 0$ and for the vertex $\bm{01}$ it is blue because $\xi \p{\bm{01}} = 1$.}
    \label{fig:deBruijn2xi}
    \hrulefill 
\end{figure}
\begin{figure}
    \centering
    \includegraphics
    [height=\textwidth]
    {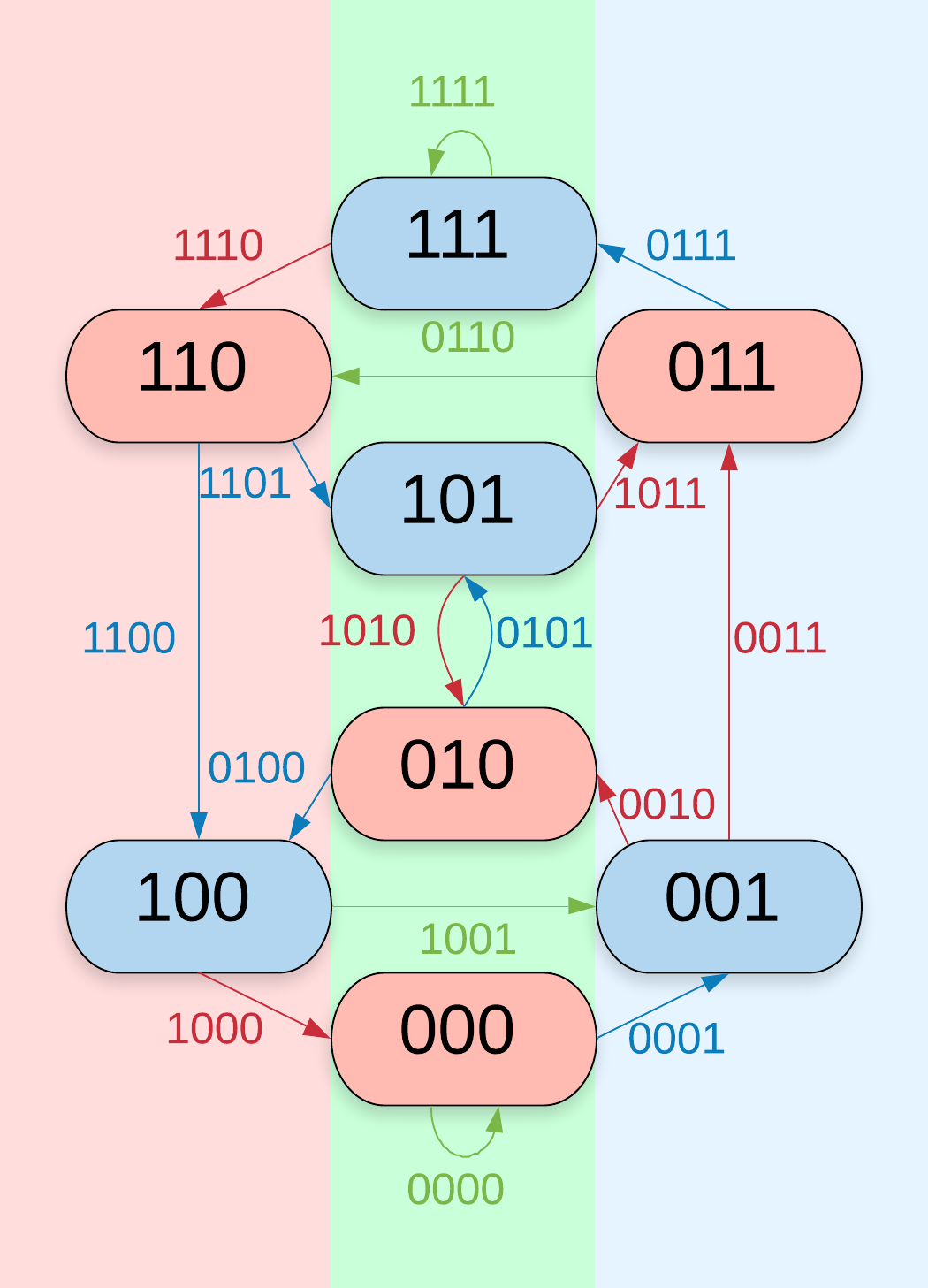}
    \caption{The $3$-dimensional de Bruijn graph. The vertices and edges are coloured red if $\phi = -1$, green if $\phi = 0$ and blue if $\phi = 1$. The background of the vertices is painted red where $\xi = -1$, green where $\xi = 0$ and blue where $\xi = 1$. Note that since the vertices have odd length, none of them is green.}
    \label{fig:deBruijn3xi}
    \hrulefill
\end{figure}
\begin{figure}
    \centering
    \includegraphics
    [height=\textwidth]
    {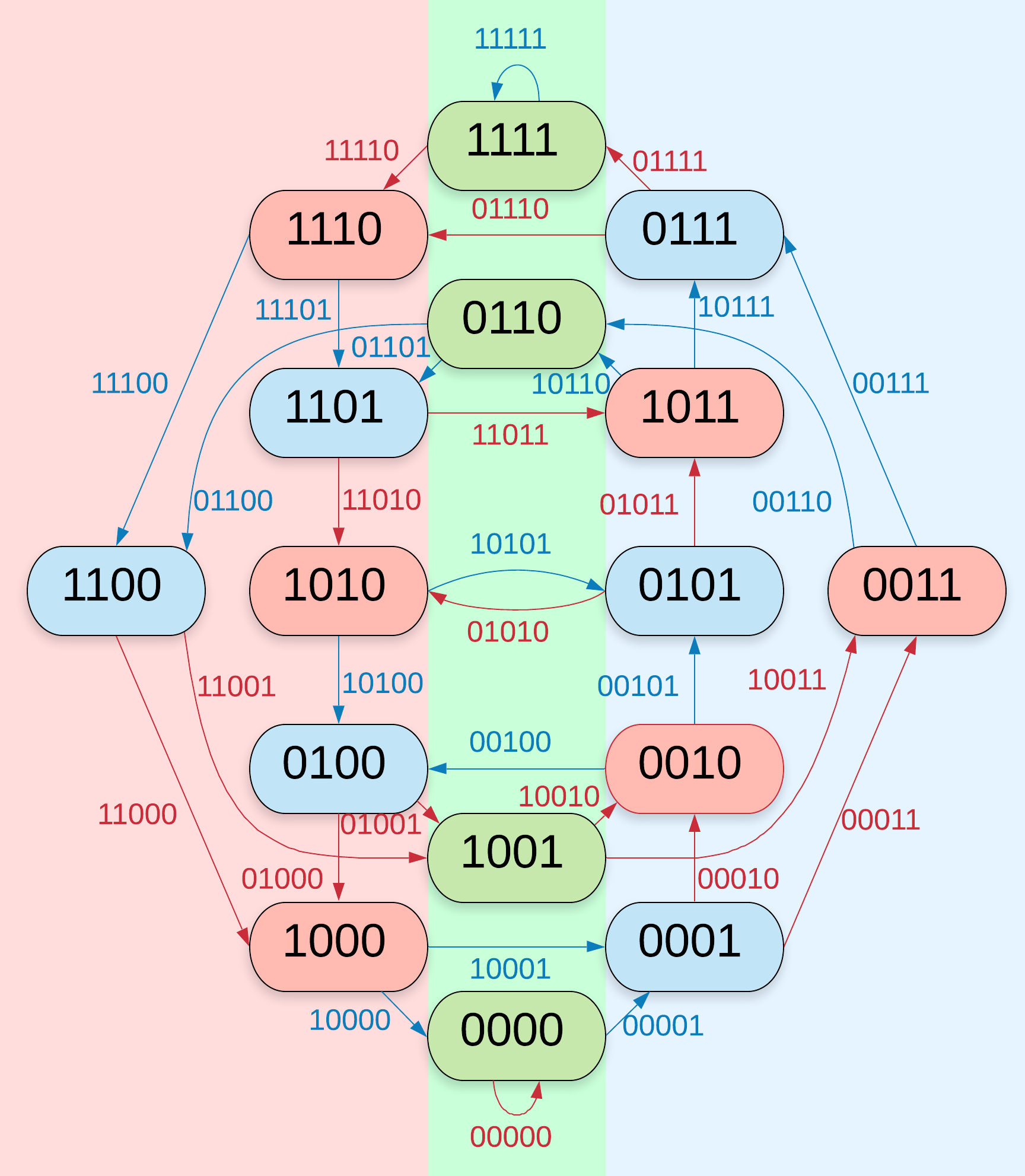}
    \caption{The $4$-dimensional de Bruijn graph. The vertices and edges are coloured red if $\phi = -1$, green if $\phi = 0$ and blue if $\phi = 1$. The background of the vertices is painted red where $\xi = -1$, green where $\xi = 0$ and blue where $\xi = 1$. Note that since the edges have odd length, none of them is green while the green vertices are exactly those that also have green background.}
    \label{fig:deBruijn4xi}
    \hrulefill
\end{figure}

\begin{lemma} \label{lemma:xiCxiR}
    \begin{align}
        \xi \circ C = \xi \circ R = -\xi 
        \label{eq:xiCxiR}
    \end{align}
\end{lemma}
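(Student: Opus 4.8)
The plan is to prove the two identities $\xi \circ C = -\xi$ and $\xi \circ R = -\xi$ together, by strong induction on $\# w$. The base case $w = \upepsilon$ is immediate: $C \p \upepsilon = R \p \upepsilon = \upepsilon$ and $\xi \p \upepsilon = 0 = -0$.

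Before the inductive step I would record how $C$ and $R$ interact with the three exceptional cases in the definition of $\xi$. First, $C$ and $R$ are length-preserving involutions on $\abc^*$ fixing $\upepsilon$. Second — and this is the only mildly computational point — each of $C$ and $R$ interchanges the sets $\crlT{\even \ge 0}$ and $\crlCT{\even \ge 0}$: for $C$ this follows at once from $CT^n = C\p{T^n}$ together with $C \circ C = \mathrm{id}$, and for $R$ one checks directly that $R\p{T^n} = CT^n$ whenever $n$ is even (reversing a $\zero$-initial alternating string of even length yields the $\one$-initial one of the same length), whence also $R\p{CT^n} = T^n$. Combining these, for $w \ne \upepsilon$ we get $w \in \crlT{\even \ge 0}$ iff $C \p w \in \crlCT{\even \ge 0}$ iff $R \p w \in \crlCT{\even \ge 0}$, and symmetrically with the roles of $T$ and $CT$ exchanged; in particular $w$ lies in the ``else'' branch of the definition of $\xi$ exactly when $C \p w$ does, and exactly when $R \p w$ does.

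For the inductive step, fix $w$ with $\# w = n \ge 1$ and assume the statement for all shorter strings. If $w \in \crlT{\even \ge 0}$ then $\xi \p w = 1$, while $C \p w, R \p w \in \crlCT{\even \ge 0}$, so $\xi\p{C \p w} = \xi\p{R \p w} = -1 = -\xi \p w$; the case $w \in \crlCT{\even \ge 0}$ is symmetric. Otherwise $w$ is in the ``else'' branch, hence so are $C \p w$ and $R \p w$, and $\xi \p w = \sgn\p{\xi\p{l \p w} + \xi\p{r \p w}}$. For $C$, equation \eqref{eq:CllCCrrC} gives $l\p{C \p w} = C\p{l \p w}$ and $r\p{C \p w} = C\p{r \p w}$, and since $l \p w$ and $r \p w$ have length $n - 1$ the induction hypothesis yields
\[
    \xi\p{C \p w}
    = \sgn\p{\xi\p{C\p{l\p w}} + \xi\p{C\p{r\p w}}}
    = \sgn\p{-\xi\p{l\p w} - \xi\p{r\p w}}
    = -\xi \p w .
\]
For $R$, equation \eqref{eq:RlrRRrlR} gives $l\p{R \p w} = R\p{r \p w}$ and $r\p{R \p w} = R\p{l \p w}$, and the induction hypothesis again gives $\xi\p{R \p w} = \sgn\p{-\xi\p{r\p w} - \xi\p{l\p w}} = -\xi \p w$. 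This closes the induction.

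I do not expect a genuine obstacle here: this is a routine structural induction, well-founded because the recursion in the definition of $\xi$ strictly decreases length at every ``else'' step. The one point that needs a little care is the bookkeeping of the exceptional cases — in particular verifying that reversal and complementation each map the ``else'' branch onto itself, which reduces to the concrete identity $R\p{T^n} = CT^n$ for even $n$. It is also worth noting that the two identities cannot be deduced from one another by a purely formal manipulation using $R \circ C = C \circ R$ (equation \eqref{eq:RCCR}), so both have to be carried through the induction; fortunately they go through in parallel at no extra cost.
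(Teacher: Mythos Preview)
Your proof is correct and follows essentially the same route as the paper: induction on $\# w$, the base case $\upepsilon$, the exceptional cases $w \in \crlT{\even} \cup \crlCT{\even}$, and then the recursive ``else'' branch using \eqref{eq:CllCCrrC} and \eqref{eq:RlrRRrlR} together with the induction hypothesis. The only difference is that you spell out explicitly why $C$ and $R$ each swap $\crlT{\even}$ with $\crlCT{\even}$ (and hence preserve the ``else'' branch), whereas the paper leaves this implicit in the phrase ``follows directly from the definition of $\xi$''.
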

\begin{proof}
    By induction on $\# w$. 
    \begin{align*}
        \xi \p{C \p \upepsilon} &= - \xi \p \upepsilon = \xi \p{R \p \upepsilon} = 0
    \end{align*}
    Now assume, the statement holds for $\# w < n$. Let $\# w = n$. 
    
    If $w \in \crl{T}^\even \cup \crl{CT}^\even$, the statement follows directly from the definition of $\xi$. Otherwise, 
    \begin{align*}
        \begin{split}
            \p{\xi \circ C} \p w 
            \spaceby{}{\eqref{eq:xiCxiR}}&
            \sgn \p{
                \p{\xi \circ l \circ C} \p w + 
                \p{\xi \circ r \circ C} \p w
            } 
        \\ 
            \by{\eqref{eq:CllCCrrC}}&
            \sgn \p{
                \p{\xi \circ C \circ l} \p w + 
                \p{\xi \circ C \circ r} \p w
            } 
        \\ 
            \by{\eqref{eq:xiCxiR}}&
            -\sgn \p{
                \p{\xi \circ l} \p w + 
                \p{\xi \circ r} \p w
            } 
        \\             
           \spaceby{}{\eqref{eq:xiCxiR}}&
           -\xi \p w            
        \end{split}
    \\
        \begin{split}
            \p{\xi \circ R} \p w 
            \spaceby{}{\eqref{eq:xiCxiR}}&
            \sgn \p{
                \p{\xi \circ l \circ R} \p w + 
                \p{\xi \circ r \circ R} \p w
            } 
        \\ 
            \by{\eqref{eq:RlrRRrlR}}&
            \sgn \p{
                \p{\xi \circ R \circ r} \p w + 
                \p{\xi \circ R \circ l} \p w
            } 
        \\ 
            \by{\eqref{eq:xiCxiR}}&
            -\sgn \p{
                \p{\xi \circ r} \p w + 
                \p{\xi \circ l} \p w
            } 
        \\            
           \spaceby{}{\eqref{eq:xiCxiR}}&
           -\xi \p w.          
        \end{split}
    \end{align*}
\end{proof}

\begin{corll} \label{corll:xipalindrome}
    Let $w \in \abc^*$ be a palindrome. Then 
    \begin{align}
        \xi \p w = 0
        \label{eq:xipalindrome}.        
    \end{align}
\end{corll}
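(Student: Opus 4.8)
The plan is to derive this directly from Lemma \ref{lemma:xiCxiR}, so the argument will be very short. The essential observation is that the defining property of a palindrome, as recalled in the Notation section, is precisely that $R \p w = w$. Applying $\xi$ to both sides of this identity and using the middle equality of \eqref{eq:xiCxiR}, namely $\xi \circ R = -\xi$, I would obtain
\begin{align*}
    \xi \p w = \p{\xi \circ R} \p w = -\xi \p w.
\end{align*}

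From $\xi \p w = -\xi \p w$ it follows that $2 \, \xi \p w = 0$, hence $\xi \p w = 0$; equivalently, one may simply note that $0$ is the only element of the codomain $\crl{-1, 0, 1}$ of $\xi$ that is fixed by negation. This establishes \eqref{eq:xipalindrome}.

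I do not anticipate any obstacle here: the corollary is an immediate consequence of the antisymmetry of $\xi$ under reversal already proved in Lemma \ref{lemma:xiCxiR}, and no induction, case analysis, or appeal to the recursive definition of $\xi$ is needed beyond what went into that lemma. The only thing to be careful about is to invoke the correct half of \eqref{eq:xiCxiR} (the $R$"~part rather than the $C$"~part), since it is the reversal symmetry, not the complementation symmetry, that is relevant for palindromes.
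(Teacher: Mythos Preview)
Your argument is correct and is essentially identical to the paper's own proof: both use that $R \p w = w$ for a palindrome together with $\xi \circ R = -\xi$ from \eqref{eq:xiCxiR} to conclude $\xi \p w = -\xi \p w$ and hence $\xi \p w = 0$. The only difference is that you spell out the final step slightly more explicitly.
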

\begin{proof}
    Since for a palindrome $w$, $R \p w = w$, equation \eqref{eq:xiCxiR} becomes 
    \begin{align*}
        \xi \p w &= -\xi \p w
    \end{align*}
    which establishes the statement. 
\end{proof}

\begin{countex}
    $\bm{001101}$ is an example for a string that is not a palindrome although $$\xi \p{\bm{001101}} = 0.$$
\end{countex}

\begin{corll} \label{corll:xiTodd}
    Let $k$ be odd. Then 
    \begin{align}
        \xi \p{T^k} = 
        \xi \p{CT^k} = 
        0.
        \label{eq:xiTodd}
    \end{align}
\end{corll}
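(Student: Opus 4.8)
The plan is to deduce this immediately from Corollary~\ref{corll:xipalindrome} by first checking that, for odd $k$, both $T^k$ and $CT^k$ are palindromes. The point is a parity count: the $i$-th letter of $T^k$ is $\zero$ precisely when $i$ is even, and $R$ carries the letter in position $i$ to position $k-1-i$. When $k$ is odd, $k-1$ is even, so $k-1-i$ has the same parity as $i$; hence $R\p{T^k} = T^k$, and likewise $R\p{CT^k} = CT^k$. (One could equivalently record the slightly more general bookkeeping that $R\p{T^k}=T^k$ and $R\p{CT^k}=CT^k$ when $k$ is odd, whereas $R\p{T^k}=CT^k$ and $R\p{CT^k}=T^k$ when $k$ is even — the latter being consistent with $\xi\circ R=-\xi$ from Lemma~\ref{lemma:xiCxiR} since $\xi\p{T^{\even}}=1=-\xi\p{CT^{\even}}$.) With $T^k$ and $CT^k$ identified as palindromes, Corollary~\ref{corll:xipalindrome} gives $\xi\p{T^k}=\xi\p{CT^k}=0$ at once.

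Alternatively I would give a fully self-contained argument straight from the recursion, avoiding even the word ``palindrome'': for odd $k$ the string $T^k$ has odd length and begins with $\zero$, so it lies in neither $\crl T^{\even \ge 0}$ nor $\crl {CT}^{\even \ge 0}$, whence $\xi\p{T^k}=\sgn\p{\xi\p{l\p{T^k}}+\xi\p{r\p{T^k}}}$. Dropping the last letter of an alternating string of odd length starting with $\zero$ leaves an alternating string of even length starting with $\zero$, i.e.\ $l\p{T^k}=T^{k-1}$; dropping the first letter leaves one of even length starting with $\one$, i.e.\ $r\p{T^k}=CT^{k-1}$. Since $k-1$ is even, the definition of $\xi$ gives $\xi\p{T^{k-1}}=1$ and $\xi\p{CT^{k-1}}=-1$, so $\xi\p{T^k}=\sgn\p{1+\p{-1}}=0$; the computation for $CT^k$ is the mirror image, using $l\p{CT^k}=CT^{k-1}$ and $r\p{CT^k}=T^{k-1}$ (and Lemma~\ref{lemma:xiCxiR} or the definition directly).

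There is no real obstacle here; the only thing requiring a touch of care is the bookkeeping of which alternating string one lands on after applying $l$, $r$, $R$ or $C$ to $T^k$ or $CT^k$, and that is a routine parity observation from the description of $T$ and $CT$ in the opening notation. I would therefore expect the shortest acceptable proof to be one or two lines: $T^k$ and $CT^k$ are palindromes for odd $k$, so the claim follows from Corollary~\ref{corll:xipalindrome}.
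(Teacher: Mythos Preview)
Your proposal is correct and matches the paper's own proof exactly: the paper simply notes that $T^k$ and $CT^k$ are palindromes for odd $k$ and invokes Corollary~\ref{corll:xipalindrome}. Your additional parity bookkeeping and the alternative direct recursion argument are sound but more than the paper provides.
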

\begin{proof}
    Since $T^k, CT^k$ are palindromes the statement follows directly from corollary ~\ref{corll:xipalindrome}.
\end{proof}

\begin{observ} \label{observ:xi0} 
    Let $w \in \abc^{\ge 1}$.
    \begin{align}
        \xi \p w = 0 &\iff
        w \notin \crl{T}^\even \cup \crl{CT}^\even \land
        \xi \p{l \p w} = -\xi \p{r \p w}
        \label{eq:xi0}
    \end{align}
\end{observ}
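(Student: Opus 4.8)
The plan is to prove the biconditional \eqref{eq:xi0} by a direct case analysis on the definition of $\xi$, splitting on whether $w$ lies in the distinguished set $\crl{T}^\even \cup \crl{CT}^\even$ or not. The only thing to keep in mind is that $w \in \abc^{\ge 1}$ rules out $w = \upepsilon$, so that membership in that distinguished set is exactly the condition under which the recursive (``else'') clause in the definition of $\xi$ fails to apply; thus the three non-recursive clauses cover precisely the strings in $\crl{T}^\even \cup \crl{CT}^\even$.

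First I would handle the case $w \in \crl{T}^\even \cup \crl{CT}^\even$. Here the definition gives $\xi \p w \in \crl{-1, 1}$, so $\xi \p w \ne 0$ and the left-hand side of \eqref{eq:xi0} is false; on the right-hand side the first conjunct $w \notin \crl{T}^\even \cup \crl{CT}^\even$ is false, so the conjunction is false as well, and the biconditional holds vacuously.

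Second I would handle the case $w \notin \crl{T}^\even \cup \crl{CT}^\even$. Since also $w \ne \upepsilon$, the definition of $\xi$ reduces to $\xi \p w = \sgn \p{\xi \p{l \p w} + \xi \p{r \p w}}$. Because $\sgn$ vanishes exactly at $0$, we get $\xi \p w = 0 \iff \xi \p{l \p w} + \xi \p{r \p w} = 0 \iff \xi \p{l \p w} = -\xi \p{r \p w}$. As the first conjunct of the right-hand side of \eqref{eq:xi0} holds throughout this case, the right-hand side is equivalent to $\xi \p{l \p w} = -\xi \p{r \p w}$, and the biconditional holds here too.

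I do not expect a real obstacle: the statement is essentially a restatement of the recursion, and the subtle point is only the bookkeeping that $w \ne \upepsilon$ makes the side condition $w \notin \crl{T}^\even \cup \crl{CT}^\even$ line up precisely with the applicability of the recursive clause. One may additionally remark that the side condition cannot be dropped, since for $w \in \crl{T}^\even \cup \crl{CT}^\even$ one still has $\xi \p{l \p w} = -\xi \p{r \p w}$ (both values being $0$ by Corollary~\ref{corll:xiTodd}, as $l \p w, r \p w$ then have odd length), even though $\xi \p w \ne 0$.
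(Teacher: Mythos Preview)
Your proof is correct. In the paper this statement is recorded as an observation without proof, precisely because it is an immediate unpacking of the recursive definition of $\xi$; your case split on membership in $\crl{T}^\even \cup \crl{CT}^\even$ and the use of $\sgn\p x = 0 \iff x = 0$ is exactly the intended reading, and your closing remark (invoking Corollary~\ref{corll:xiTodd}) nicely explains why the side condition cannot be dropped.
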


\begin{corll}
    Let $w \in \abc^{\ge 1}$ such that $\xi \p{l \p w}, \xi \p{r \p w} \ne 0$. Then
    \begin{align}
        \xi \p w = 0 \iff
        \xi \p{l \p w} \ne \xi \p{r \p w}
        \label{eq:xi0*}
    \end{align}
\end{corll}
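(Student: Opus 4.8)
The plan is to read this off from Observation~\ref{observ:xi0}. By~\eqref{eq:xi0}, $\xi \p w = 0$ holds precisely when $w \notin \crl{T}^\even \cup \crl{CT}^\even$ and $\xi \p{l \p w} = -\xi \p{r \p w}$. The task therefore splits into two pieces: first, show that the hypothesis $\xi \p{l \p w}, \xi \p{r \p w} \ne 0$ forces $w \notin \crl{T}^\even \cup \crl{CT}^\even$, so that that condition drops out; second, note that among values in $\crl{-1, 1}$, being negatives of one another is the same as being distinct.

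For the first piece I would argue by contraposition. Suppose $w \in \crl{T}^\even \cup \crl{CT}^\even$; since $w \in \abc^{\ge 1}$ it is nonempty, hence $w = T^{2j}$ or $w = CT^{2j}$ for some $j \ge 1$. In either case $l \p w$ and $r \p w$ are the two alternating strings of odd length $2j - 1$, namely $T^{2j-1}$ and $CT^{2j-1}$ in one order or the other, so Corollary~\ref{corll:xiTodd} gives $\xi \p{l \p w} = \xi \p{r \p w} = 0$, contradicting the hypothesis. Thus $w \notin \crl{T}^\even \cup \crl{CT}^\even$, and~\eqref{eq:xi0} simplifies to $\xi \p w = 0 \iff \xi \p{l \p w} = -\xi \p{r \p w}$.

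For the second piece, $\xi$ only ever returns values in $\crl{-1, 0, 1}$, and by hypothesis $\xi \p{l \p w}$ and $\xi \p{r \p w}$ are nonzero, so both lie in $\crl{-1, 1}$. For two such numbers $a, b$ one has $a = -b$ if and only if $a \ne b$. Substituting this equivalence into the simplified form of~\eqref{eq:xi0} yields exactly~\eqref{eq:xi0*}.

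I do not anticipate a real obstacle: the statement is essentially a specialisation of Observation~\ref{observ:xi0}. The only step needing a moment's care is the contraposition argument ruling out the alternating even-length strings, where one must spell out that deleting either end letter of $T^{2j}$ or $CT^{2j}$ leaves an odd-length alternating string, so that Corollary~\ref{corll:xiTodd} applies.
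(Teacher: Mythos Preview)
Your proof is correct and follows the same route as the paper: use Corollary~\ref{corll:xiTodd} to rule out $w \in \crl{T}^\even \cup \crl{CT}^\even$ under the hypothesis, then invoke Observation~\ref{observ:xi0}. You simply spell out in more detail the contraposition step and the passage from $\xi \p{l \p w} = -\xi \p{r \p w}$ to $\xi \p{l \p w} \ne \xi \p{r \p w}$, both of which the paper leaves implicit.
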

\begin{proof}
    From corollary ~\ref{corll:xiTodd} it follows that $w \notin \crl{T}^\even \cup \crl{CT}^\even$, so the statement follows from observation \ref{observ:xi0}.
\end{proof}

\begin{remark}
    When $\xi \p{l \p w}, \xi \p{r \p w} \ne 0$, it is equivalent to say $\xi \p{l \p w} = -\xi \p{r \p w}$ or $\xi \p{l \p w} \ne \xi \p{r \p w}$. For aesthetic reasons, the latter formulation will be preferred, also in future occasions. 
\end{remark}

\begin{lemma} \label{lemma:xixil} \label{lemma:xixir}
    Let $w \in \abc^{\ge 1}$ such that $\xi \p{w} \ne 0$. Then 
    \begin{align}
        \xi \p{l \p{w}} \ne 0 &\implies
        \xi \p{w} = \xi \p{l \p w}
        \label{eq:xixir}
    \\ 
        \xi \p{r \p{w}} \ne 0 &\implies
        \xi \p{w} = \xi \p{r \p w}
        \label{eq:xixil}.
    \end{align}
\end{lemma}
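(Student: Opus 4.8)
The plan is a short case distinction according to which clause of the definition of $\xi$ applies to $w$, exploiting that $\xi$ takes only the three values $-1, 0, 1$. Consider first the case $w \in \crl{T}^\even \cup \crl{CT}^\even$. Since $w \in \abc^{\ge 1}$ we have $w \ne \upepsilon$, so $w$ has even length $\ge 2$, and hence $l \p w$ and $r \p w$ are alternating strings of odd length, i.e.\ members of $\crl{T}^\odd \cup \crl{CT}^\odd$; this follows from the recursive definition of $T$ together with \eqref{eq:CllCCrrC}, or simply from the fact that odd-length alternating strings are palindromes. By corollary~\ref{corll:xiTodd} (or corollary~\ref{corll:xipalindrome}) both $\xi \p{l \p w}$ and $\xi \p{r \p w}$ equal $0$, so the hypotheses of both implications fail and there is nothing to prove.

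In the remaining case the last clause of the definition gives $\xi \p w = \sgn \p{\xi \p{l \p w} + \xi \p{r \p w}}$, and $\xi \p w \ne 0$ forces $\xi \p{l \p w} + \xi \p{r \p w} \ne 0$. I would prove \eqref{eq:xixir}; the proof of \eqref{eq:xixil} is identical with $l$ and $r$ interchanged, the defining expression being symmetric in $l \p w$ and $r \p w$. Assume $\xi \p{l \p w} \ne 0$, i.e.\ $\xi \p{l \p w} \in \crl{-1, 1}$. If $\xi \p{r \p w} = 0$, then $\xi \p w = \sgn \p{\xi \p{l \p w}} = \xi \p{l \p w}$. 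If $\xi \p{r \p w} \ne 0$, then $\xi \p{l \p w}$ and $\xi \p{r \p w}$ both lie in $\crl{-1, 1}$, and were they different their sum would vanish, contradicting $\xi \p w \ne 0$; hence they coincide and $\xi \p w = \sgn \p{2\, \xi \p{l \p w}} = \xi \p{l \p w}$. In either sub-case $\xi \p w = \xi \p{l \p w}$, as required.

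I do not expect a real obstacle. The two points needing a little care are that the hypothesis $w \in \abc^{\ge 1}$ is precisely what rules out $w = \upepsilon$ in the first case (so that $l \p w$ and $r \p w$ really are odd-length alternating strings), and that the collapse of the ``else'' branch genuinely relies on $\xi$ being three-valued, so that two nonzero values either agree or cancel. One could alternatively extract the statement from observation~\ref{observ:xi0} together with \eqref{eq:xi0*}, but the direct computation above is shorter.
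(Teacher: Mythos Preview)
Your proof is correct and follows essentially the same route as the paper's. The paper is slightly terser: rather than splitting into the case $w \in \crl T^{\even} \cup \crl{CT}^{\even}$ first, it observes directly that the hypothesis $\xi\p{l\p w} \ne 0$ (or $\xi\p{r\p w} \ne 0$) already rules out that case via \eqref{eq:xiTodd}, and then notes in one line that if one summand in $\sgn\p{\xi\p{l\p w}+\xi\p{r\p w}}$ is nonzero the result is either that summand or zero---which is exactly your sub-case analysis compressed.
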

\begin{proof}
    From $\xi \p{l \p w} \ne 0 \lor \cp \xi r w \ne 0$ it follows that $w \by[\notin]{\eqref{eq:xiTodd}} \crl T ^ \even \cup \crl{CT} ^ \even$, so 
    \begin{align*}
        \xi \p w = 
        \sgn \p{\cp \xi l w + \cp \xi r w}.
    \end{align*}
    Now if one of the summands is non-zero, $\xi \p w$ must either equal that summand or be zero. 
\end{proof}

\begin{lemma} \label{lemma:xi0i1j}
    Let $\i, \j \ge 1$. Then 
    \begin{align}
        \xi \p{\zero^\i \one^\j} = 1.
        \label{eq:xi0i1j}
    \end{align}
\end{lemma}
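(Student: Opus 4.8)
The plan is to prove \eqref{eq:xi0i1j} by induction on the length $\#w = \i + \j$. Before starting the induction I would record the elementary fact that $\xi \p{\zero^k} = \xi \p{\one^k} = 0$ for every $k \ge 0$: each of these strings is a palindrome, so this is immediate from corollary~\ref{corll:xipalindrome} (together with $\xi \p \upepsilon = 0$ by definition). These are exactly the strings that will surface as $l \p w$ or $r \p w$ at the boundary of the induction, when one of the two exponents drops to~$0$. It is also convenient to note that \eqref{eq:xi0i1j} is symmetric under $\i \leftrightarrow \j$: indeed $R \p{C \p{\zero^\i \one^\j}} = \zero^\j \one^\i$ and $\xi \circ R \circ C = \xi$ by \eqref{eq:xiCxiR} applied twice, so one of the two boundary subcases below may simply be quoted from the other.

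The base case is $\#w = 2$, where the only string of the required form is $\bm{01} = T^2 \in \crl{T}^\even$, hence $\xi \p{\bm{01}} = 1$ directly from the definition of $\xi$.

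For the inductive step, let $w = \zero^\i \one^\j$ with $\#w = \i + \j \ge 3$. Then $\i$ and $\j$ are not both equal to $1$, so $w$ has two equal adjacent letters, is not alternating, and therefore $w \notin \crl{T}^\even \cup \crl{CT}^\even$; the recursive clause of the definition then gives $\xi \p w = \sgn \p{\xi \p{l \p w} + \xi \p{r \p w}}$, where $l \p w = \zero^\i \one^{\j - 1}$ and $r \p w = \zero^{\i - 1} \one^\j$. I would then split into three subcases. If $\i \ge 2$ and $\j \ge 2$, both $l \p w$ and $r \p w$ are shorter strings of the same shape with both exponents $\ge 1$, so the induction hypothesis gives $\xi \p{l \p w} = \xi \p{r \p w} = 1$ and hence $\xi \p w = \sgn \p{2} = 1$. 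If $\i = 1$, so that $\j \ge 2$, then $r \p w = \one^\j$ has $\xi \p{r \p w} = 0$ by the palindrome fact above, while $l \p w = \zero \one^{\j - 1}$ still has both exponents $\ge 1$, so $\xi \p{l \p w} = 1$ by the hypothesis and $\xi \p w = \sgn \p{1 + 0} = 1$. The remaining subcase $\j = 1$ follows from the subcase $\i = 1$ by the $\i \leftrightarrow \j$ symmetry (or directly: $l \p w = \zero^\i$ contributes $0$ and $r \p w = \zero^{\i - 1} \one$ contributes $1$).

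I do not expect a genuine obstacle: the lemma is essentially tailor-made for this structural induction along $l$ and $r$. The only point demanding care is the bookkeeping at the edges of the case split — making sure the induction hypothesis is invoked only on strings that are genuinely of the form $\zero^{\i'} \one^{\j'}$ with $\i', \j' \ge 1$, and otherwise falling back on the values $\xi \p{\zero^k} = \xi \p{\one^k} = 0$. Exploiting the symmetry from the outset trims the casework, though the argument is light in any presentation.
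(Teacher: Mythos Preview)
Your proof is correct and follows the same induction on $\i + \j$ as the paper, using the recursive clause and the fact that the summands $\xi\p{\zero^\i\one^{\j-1}}$ and $\xi\p{\zero^{\i-1}\one^\j}$ are each either $1$ (by the hypothesis) or $0$ (when an exponent hits zero). The paper compresses your three subcases into the single remark that ``at least one of the summands is $1$ while the other cannot be negative either''; your explicit verification that $w \notin \crl{T}^\even \cup \crl{CT}^\even$ and the $\i \leftrightarrow \j$ symmetry observation are welcome clarifications but not substantive departures.
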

\begin{proof}
    Induction on $\i + \j$. $\xi \p{\bm{01}} = 1$, which establishes the statement for $\i + \j = 2$. Now assume the statement holds for $\i + \j < n \in \N$. Let $\i + \j = n$. 
    \begin{align*}
        \xi \p{\zero^\i \one^\j} = 
        \sgn \p{
            \xi \p{\zero^\i \one^{\j-1}} + 
            \xi \p{\zero^{\i-1} \one^\j}
        },
    \end{align*}
    where by the induction hypothesis at least one of the summands is 1 while the other cannot be negative either. The statement follows. 
\end{proof}

\begin{corll} \label{corll:xi1j0i}
    \begin{align}
        \xi \p{\one^\j \zero^\i} = -1
        \label{eq:xi1j0i}
    \end{align}
\end{corll}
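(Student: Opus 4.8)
The plan is to avoid a fresh induction and instead obtain the statement from Lemma~\ref{lemma:xi0i1j} together with the antisymmetry of $\xi$ under reversal (equivalently under complementation) recorded in Lemma~\ref{lemma:xiCxiR}. The only genuinely new ingredient needed is a purely combinatorial identity: reversing the string $\zero^\i \one^\j$ — a block of $\i$ zeros followed by a block of $\j$ ones — produces the string $\one^\j \zero^\i$, i.e. $R \p{\zero^\i \one^\j} = \one^\j \zero^\i$. This is immediate from the definition of $R$.

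With this identity the computation is a one-liner: for $\i, \j \ge 1$,
\begin{align*}
    \xi \p{\one^\j \zero^\i}
    = \xi \p{R \p{\zero^\i \one^\j}}
    = -\xi \p{\zero^\i \one^\j}
    = -1,
\end{align*}
where the first equality is the reversal identity above, the second equality uses $\xi \circ R = -\xi$ from \eqref{eq:xiCxiR}, and the last equality is Lemma~\ref{lemma:xi0i1j}, which applies precisely because $\i, \j \ge 1$. An equally short alternative uses the complement in place of the reverse: since $C \p{\zero^\j \one^\i} = \one^\j \zero^\i$ and $\xi \circ C = -\xi$, one gets $\xi \p{\one^\j \zero^\i} = -\xi \p{\zero^\j \one^\i} = -1$ by Lemma~\ref{lemma:xi0i1j}, now with the roles of $\i$ and $\j$ interchanged.

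I expect no real obstacle here. The only point requiring a moment's care is the index bookkeeping in the reversal (respectively complement) identity, together with the observation that Lemma~\ref{lemma:xi0i1j} is stated for \emph{all} $\i, \j \ge 1$, so permuting the two exponents causes no trouble. For completeness one could instead mirror the induction on $\i + \j$ used in the proof of Lemma~\ref{lemma:xi0i1j}, but that would be strictly more work than invoking the symmetry already available.
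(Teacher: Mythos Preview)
Your proof is correct and is essentially identical to the paper's own argument: the paper writes the one-line chain $\xi \p{\one^\j \zero^\i} \by{\eqref{eq:xiCxiR}} -\xi \p{\zero^\i \one^\j} \by{\eqref{eq:xi0i1j}} -1$, using the reversal (or complement) antisymmetry from Lemma~\ref{lemma:xiCxiR} together with Lemma~\ref{lemma:xi0i1j}. Your discussion of the reversal identity $R\p{\zero^\i\one^\j}=\one^\j\zero^\i$ and the complement variant is just an explicit unpacking of that same step.
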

\begin{proof}
    \begin{align*}
        \xi \p{\one^\j \zero^\i} 
        \by{\eqref{eq:xiCxiR}}
        -\xi \p{\zero^\i \one^\j} 
        \by{\eqref{eq:xi0i1j}}
        -1
    \end{align*}
\end{proof}

\begin{lemma} \label{lemma:xiTk0}
    Let $k \ge 2$. Then 
    \begin{align}
        \xi \p{T^k \zero} &= 
        \begin{cases}
             0 & \textif k \iseven
        \\ 
            -1 & \textif k \isodd
        \end{cases}
        \label{eq:xiTk0}
    \\
        \xi \p{T^k \one} &= 1
        \label{eq:xiTk1}
    \\            
        \xi \p{CT^k \zero} &= -1
        \label{eq:xiCTk0}
    \\
        \xi \p{CT^k \one} &= 
        \begin{cases}
            0 & \textif k \iseven
        \\ 
            1 & \textif k \isodd[.]
        \label{eq:xiCTk1}
        \end{cases}
    \end{align}
\end{lemma}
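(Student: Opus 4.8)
The plan is to halve the work with the complement symmetry of $\xi$ and then prove the two surviving identities by induction on $k$. Since $C\p{T^k\zero} = CT^k\one$ and $C\p{T^k\one} = CT^k\zero$, Lemma~\ref{lemma:xiCxiR} yields $\xi\p{CT^k\one} = -\xi\p{T^k\zero}$ and $\xi\p{CT^k\zero} = -\xi\p{T^k\one}$, so \eqref{eq:xiCTk0} follows from \eqref{eq:xiTk1} and \eqref{eq:xiCTk1} from \eqref{eq:xiTk0}. It therefore suffices to prove \eqref{eq:xiTk0} and \eqref{eq:xiTk1} together, which I would do by induction on $k \ge 2$. The base case $k = 2$ is read off Table~\ref{tab:xi}, since $T^2\zero = \bm{010}$ and $T^2\one = \bm{011}$ with $\xi\p{\bm{010}} = 0$ and $\xi\p{\bm{011}} = 1$.

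For the inductive step fix $k \ge 3$ and assume \eqref{eq:xiTk0} and \eqref{eq:xiTk1} — hence also \eqref{eq:xiCTk0} and \eqref{eq:xiCTk1} by the symmetry above — at $k-1$; this index is $\ge 2$ in both parity cases below. I use repeatedly that $l\p{T^k x} = T^k$ and $r\p{T^k x} = CT^{k-1} x$ for a letter $x$, the latter because deleting the leading $\zero$ of $T^k$ turns it into $CT^{k-1}$. If $k$ is even, then $T^k\zero = T^{k+1}$ with $k+1$ odd, so $\xi\p{T^k\zero} = 0$ by Corollary~\ref{corll:xiTodd}; and $T^k\one$ has odd length $k+1$ and so lies in neither $\crl{T}^{\even}$ nor $\crl{CT}^{\even}$, whence the recursive clause gives $\xi\p{T^k\one} = \sgn\p{\xi\p{T^k} + \xi\p{CT^{k-1}\one}} = \sgn\p{1 + 1} = 1$, using $\xi\p{T^k} = 1$ and the hypothesis \eqref{eq:xiCTk1} at the odd index $k-1$. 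If $k$ is odd, then $T^k\one = T^{k+1}$ with $k+1$ even and $\ge 4$, so $\xi\p{T^k\one} = 1$; and $T^k\zero$, though of even length $k+1$, is neither $T^{k+1}$ (wrong last letter) nor $CT^{k+1}$ (wrong first letter), so the recursive clause gives $\xi\p{T^k\zero} = \sgn\p{\xi\p{T^k} + \xi\p{CT^{k-1}\zero}} = \sgn\p{0 + \p{-1}} = -1$, using $\xi\p{T^k} = 0$ (Corollary~\ref{corll:xiTodd}, $k$ odd) and the hypothesis \eqref{eq:xiCTk0} at the even index $k-1$.

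This is mostly bookkeeping, and I expect no real obstacle. The one thing to be careful about is getting the images $l\p{T^k x}$ and $r\p{T^k x}$ right as alternating strings with the correct parities, and — in the two even-length subcases — verifying the side condition that the word under consideration is not itself of the form $T^{\even}$ or $CT^{\even}$, since that is precisely what allows the recursive clause in the definition of $\xi$ to be invoked rather than one of the base clauses.
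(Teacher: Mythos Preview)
Your proof is correct and follows the same inductive scheme as the paper: induction on $k$ with a parity split, unfolding the recursion via $l\p{T^k x} = T^k$ and $r\p{T^k x} = CT^{k-1}x$. The only difference is that you invoke Lemma~\ref{lemma:xiCxiR} up front to deduce \eqref{eq:xiCTk0} and \eqref{eq:xiCTk1} from \eqref{eq:xiTk1} and \eqref{eq:xiTk0}, whereas the paper carries all four identities through the induction in parallel; your reduction halves the case work but is not a different idea.
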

\begin{proof}
    By induction on $k$. Note first, that the statement holds for $k = 2$. Assume then that it holds for a certain $k \ge 2$. 
    
    Case $k$ is even: 
    \begin{align*}
        \xi \p{T^{k+1} \zero} &= 
        \sgn \p{\xi \p{T^{k+1}} + \xi{\p{CT^k \zero}}}
        \by{\eqref{eq:xiTodd}, \eqref{eq:xiCTk0}}
        \sgn \p{0 - 1} = 
        -1
    \\
        \xi \p{T^{k+1} \one} &= 
        \xi \p{T^{k+2}} = 
        1
    \\
        \xi \p{CT^{k+1} \zero} &= 
        \xi \p{CT^{k+2}} = 
        -1
    \\
        \xi \p{CT^{k+1} \one} &= 
        \sgn \p{\xi \p{CT^{k+1}} + \xi{\p{T^k \one}}}
        \by{\eqref{eq:xiTodd}, \eqref{eq:xiTk1}}
        \sgn \p{0 + 1} = 
        1
    \end{align*}
    
    Case $k$ is odd: 
    \begin{align*}
        \xi \p{T^{k+1} \zero} &= 
        \xi \p{T^{k+2}} 
        \by{\eqref{eq:xiTodd}}
        0
    \\
        \xi \p{T^{k+1} \one} &= 
        \sgn \p{\xi \p{T^{k+1}} + \xi{\p{CT^k \one}}}
        \by{\eqref{eq:xiCTk1}}
        \sgn \p{1 + 1} = 
        1
    \\
        \xi \p{CT^{k+1} \zero} &= 
        \sgn \p{\xi \p{CT^{k+1}} + \xi{\p{T^k \zero}}}
        \by{\eqref{eq:xiTk0}}
        \sgn \p{-1 - 1} = 
        -1
    \\
        \xi \p{CT^{k+1} \one} &= 
        \xi \p{CT^{k+2}} 
        \by{\eqref{eq:xiTodd}}
        0
    \end{align*}
\end{proof}

\begin{lemma} \label{lemma:xilr0}
    \label{lemma:inequalityproof}
    Let $w \in \abc^{\ge 1}$ such that $\xi \p{l \p w} = \xi \p{r \p w} = 0$. Then 
    \begin{align}
        \xi \p w = \spacemath 0 {-1} &\implies 
        w \in \crl \zero ^* \cup \crl \one ^*
        \label{eq:xilr0xi0}
    \\ 
        \xi \p w = \spacemath 1 {-1} &\implies 
        w \in \crl{ T}^\even
        \label{eq:xilr0xi1}
    \\ 
        \xi \p w = -1 &\implies 
        w \in \crl{CT}^\even  
        \label{eq:xilr0xim1}.
    \end{align}
\end{lemma}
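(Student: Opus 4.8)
The plan is to prove the three implications simultaneously by strong induction on $\# w$. For $\# w = 1$ the only strings are $\zero$ and $\one$, which lie in $\crl \zero ^* \cup \crl \one ^*$ and satisfy $\xi = 0$, so \eqref{eq:xilr0xi0} holds while \eqref{eq:xilr0xi1} and \eqref{eq:xilr0xim1} are vacuous; for $\# w = 2$ the hypothesis is automatic and the claim is read off table \ref{tab:xi}. So let $\# w = n \ge 3$, assume $\xi \p{l \p w} = \xi \p{r \p w} = 0$, and suppose the lemma holds for all shorter strings. Then \eqref{eq:xilr0xi1} and \eqref{eq:xilr0xim1} are immediate from the definition of $\xi$: since $w \ne \upepsilon$ and the final branch of that definition would evaluate to $\sgn \p{0+0} = 0$, the value $\xi \p w = 1$ can only come from the branch $w \in \crl T ^ \even$ and $\xi \p w = -1$ only from $w \in \crl{CT} ^ \even$. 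It remains to prove \eqref{eq:xilr0xi0}, so assume from now on that $\xi \p w = 0$.

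Since $r \circ l = l \circ r = m$, observation \ref{observ:xi0} applied to the nonempty strings $l \p w$ and $r \p w$ (each of length $n - 1 \ge 2$) gives $\xi \p{l \p{l \p w}} = -\xi \p{m \p w} = \xi \p{r \p{r \p w}}$. I now split into two cases according to the value of $\xi \p{m \p w}$.

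Suppose first that $\xi \p{m \p w} = 0$. Then the left shortening $l \p{l \p w}$ and the right shortening $m \p w$ of $l \p w$ both have $\xi$-value $0$, and $\xi \p{l \p w} = 0$, so the induction hypothesis \eqref{eq:xilr0xi0} gives $l \p w \in \crl \zero ^* \cup \crl \one ^*$; the same argument applied to $r \p w$ gives $r \p w \in \crl \zero ^* \cup \crl \one ^*$. As $n \ge 3$, the shared infix $m \p w$ is nonempty, so $l \p w$ and $r \p w$, being powers of a single letter, must be powers of one and the same letter; therefore $w \in \crl \zero ^* \cup \crl \one ^*$.

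It remains to rule out the case $\xi \p{m \p w} \ne 0$, which I expect to be the main obstacle. By \eqref{eq:xiCxiR} together with $C \circ l = l \circ C$, $C \circ r = r \circ C$ and $C \circ m = m \circ C$, the hypothesis and the conclusion are both preserved when $w$ is replaced by $C \p w$, so we may assume $\xi \p{m \p w} = 1$; then $\xi \p{l \p{l \p w}} = \xi \p{r \p{r \p w}} = -1$, and in particular $n \ge 4$. Because $r \p{l \p{l \p w}} = l \p{m \p w}$, equation \eqref{eq:xixil} applied to $l \p{l \p w}$ shows that $\xi \p{l \p{m \p w}}$ equals $-1$ if it is nonzero, while \eqref{eq:xixir} applied to $m \p w$ shows that it equals $1$ if nonzero; hence $\xi \p{l \p{m \p w}} = 0$, and symmetrically (using \eqref{eq:xixir} on $r \p{r \p w}$, \eqref{eq:xixil} on $m \p w$, and $l \p{r \p{r \p w}} = r \p{m \p w}$) $\xi \p{r \p{m \p w}} = 0$. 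The induction hypothesis \eqref{eq:xilr0xi1} applied to $m \p w$ then forces $m \p w = T ^ {2j}$ for some $j \ge 1$, so that $n = 2j + 2$, $l \p w = w_0\, T ^ {2j}$ and $r \p w = T ^ {2j}\, w_{n-1}$. Since the reverse of $T ^ {2j}$ is $CT ^ {2j}$, equation \eqref{eq:xiCxiR} gives $\xi \p{CT ^ {2j}\, w_0} = -\xi \p{l \p w} = 0$; as $\xi \p{CT ^ {2j}\, \zero} = -1$ by \eqref{eq:xiCTk0} (valid because $2j \ge 2$), this forces $w_0 = \one$, and in the same way $\xi \p{T ^ {2j}\, w_{n-1}} = \xi \p{r \p w} = 0$ together with $\xi \p{T ^ {2j}\, \one} = 1$ from \eqref{eq:xiTk1} forces $w_{n-1} = \zero$. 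But then $w = \one\, T ^ {2j}\, \zero = CT ^ {2j+2} \in \crl{CT} ^ \even$, so $\xi \p w = -1 \ne 0$, contradicting $\xi \p w = 0$. Hence this case does not occur, and the induction is complete.
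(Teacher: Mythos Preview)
Your proof is correct, but it takes a different route from the paper's. Both arguments set up the same strong induction and derive $\xi\p{l^2\p w} = -\xi\p{m\p w} = \xi\p{r^2\p w}$ from observation~\ref{observ:xi0}. You then dispose of \eqref{eq:xilr0xi1} and \eqref{eq:xilr0xim1} immediately from the branch structure of the definition of $\xi$, and for \eqref{eq:xilr0xi0} you split on whether $\xi\p{m\p w}=0$. In the nonzero subcase you use lemma~\ref{lemma:xixil} to force $\xi\p{l\p{m\p w}}=\xi\p{r\p{m\p w}}=0$, invoke the induction hypothesis \eqref{eq:xilr0xi1} on $m\p w$ to pin down $m\p w=T^{2j}$, and then use lemma~\ref{lemma:xiTk0} (via reversal) to determine the outer letters and reach a contradiction. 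The paper instead splits on whether $m\p w\in\crl T^{\even}\cup\crl{CT}^{\even}$: if so it appeals (tersely) to lemma~\ref{lemma:xiTk0}; if not it expands all three of $\xi\p{l^2\p w},\xi\p{m\p w},\xi\p{r^2\p w}$ recursively and runs a chain-of-signs argument (the ``inequality trick'') to conclude all three vanish, reducing to your easy $\xi\p{m\p w}=0$ case. Your route is arguably more transparent and leans only on lemma~\ref{lemma:xixil} and lemma~\ref{lemma:xiTk0}; the paper's route introduces the inequality trick, which it explicitly flags for reuse in the proof of lemma~\ref{lemma:philwrw0}.
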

\begin{proof}
    Induction on $\# w$. For $\# w \le 2$ there are only finitely many cases to consider. 
    Now assume the statements holds for $\# w < n$. Let $\# w = n \ge 3 \land \xi \p{l \p w} = \xi \p{r \p w} = 0$, which means that by observation \ref{observ:xi0}
    \begin{align*}
        \xi \p{l^2 \p w} = 
        - \xi \p{m \p w} = 
        \xi \p{r^2 \p w}.
    \end{align*}
    If $m \p w \in \crl{T}^\even \cup \crl{CT}^\even$, the statement follows from lemma~\ref{lemma:xiTk0}. Otherwise, the above equation can be rewritten the following way:
    \begin{align*}
        \begin{split}
            &\sgn \p{
                \p{\xi \circ l^3} \p w + 
                \p{\xi \circ l^2 \circ r} \p w
            } 
        \\ 
            =  - &\sgn \p{
                \p{\xi \circ l^2 \circ r} \p w +
                \p{\xi \circ l \circ r^2} \p w
            } 
        \\
            = \phantom{-} &\sgn \p{
                \p{\xi \circ l \circ r^2} \p w + 
                \p{\xi \circ r^3} \p w
            }            
        \end{split}
    \end{align*}
    Some playing with the minus sign yields:
    \begin{align*}
        \begin{split}
            &\sgn \p{
                \p{\xi \circ l^3} \p w - 
                \p{- \xi \circ l^2 \circ r} \p w
            } 
        \\
            = &\sgn \p{
                \p{- \xi \circ l^2 \circ r} \p w -
                \p{\xi \circ l \circ r^2} \p w
            } 
        \\
            = &\sgn \p{
                \p{\xi \circ l \circ r^2} \p w -
                \p{- \xi \circ r^3} \p w
            }     
        \end{split}
    \end{align*}
    Hence, one of the following three statements must hold: 
    \begin{align*}
        \p{ \xi \circ l^3} \p w < 
        \p{-\xi \circ l^2 \circ r} \p w &< 
        \p{ \xi \circ l \circ r^2} \p w < 
        \p{-\xi \circ r^3} \p w
    \\ 
        \lor \quad
        \p{ \xi \circ l^3} \p w =
        \p{-\xi \circ l^2 \circ r} \p w &=
        \p{ \xi \circ l \circ r^2} \p w =
        \p{-\xi \circ r^3} \p w
    \\ 
        \lor \quad
        \p{ \xi \circ l^3} \p w >
        \p{-\xi \circ l^2 \circ r} \p w &>
        \p{ \xi \circ l \circ r^2} \p w >
        \p{-\xi \circ r^3} \p w
    \end{align*}
    However, since $\img \xi = \crl{-1, 0, 1}$, the strict inequalities cannot hold, meaning that the equality does. Then 
    \begin{align*}
        \xi \p{l^2 \p w} = 
        \xi \p{m   \p w} = 
        \xi \p{r^2 \p w} = 
        0,     
    \end{align*}
    so by the induction hypothesis 
    \begin{align*}
       l \p w, r \p w \in 
        \crl \zero ^* \cup \crl \one ^*
    \end{align*}
    and since $\# w \ge 3$, 
    \begin{align*}
        w \in \crl \zero ^* \cup \crl \one ^*.
    \end{align*}
\end{proof}
\begin{remark}
    The argument used in this proof, considering $l^3, l^2 \circ r, l \circ r^2$ and $r^3$ of a certain string and using the fact that the functions $\xi$ and $\phi$ only attain three different values will be important also for the proof of lemma~\ref{lemma:philwrw0}. 
\end{remark}

\begin{lemma} \label{lemma:xi0xim0}
    Let $w \in \abc^{\ge 2}$ such that $\xi \p w = 0$. Then 
    \begin{align}
        \xi \p{m \p w} = 0
        \label{eq:xi0xim0}.
    \end{align} 
\end{lemma}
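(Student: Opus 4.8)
The plan is to deduce this directly from the structural lemmas already proved for $\xi$, with no induction needed. Since $\# w \ge 2$, the strings $l \p w$ and $r \p w$ lie in $\abc^{\ge 1}$ and $m \p w = \p{r \circ l} \p w = \p{l \circ r} \p w$. From $\xi \p w = 0$, observation~\ref{observ:xi0} gives $\xi \p{l \p w} = -\xi \p{r \p w}$, so these two values are either both zero or both non-zero, and I would split into these two cases.

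\textbf{Both zero.} If $\xi \p{l \p w} = \xi \p{r \p w} = 0$, then lemma~\ref{lemma:xilr0}, equation~\eqref{eq:xilr0xi0}, applies; combined with $\xi \p w = 0$ it forces $w \in \crl \zero ^* \cup \crl \one ^*$. Consequently $m \p w \in \crl \zero ^* \cup \crl \one ^*$ as well (with $m \p w = \upepsilon$ when $\# w = 2$), and every such string is a palindrome, so $\xi \p{m \p w} = 0$ by corollary~\ref{corll:xipalindrome}.

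\textbf{Both non-zero.} Here $\xi \p{l \p w}$ and $\xi \p{r \p w}$ are non-zero and opposite in sign. I would argue by contradiction, assuming $\xi \p{m \p w} \ne 0$. Applying equation~\eqref{eq:xixil} of lemma~\ref{lemma:xixil} to $l \p w$ — whose $\xi$-value is non-zero and whose $r$-image is $m \p w$ — yields $\xi \p{m \p w} = \xi \p{l \p w}$; applying equation~\eqref{eq:xixir} of the same lemma to $r \p w$ — whose $l$-image is $m \p w$ — yields $\xi \p{m \p w} = \xi \p{r \p w}$. Hence $\xi \p{l \p w} = \xi \p{r \p w}$, contradicting that they are non-zero negatives of one another. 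Therefore $\xi \p{m \p w} = 0$.

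I do not expect a genuine obstacle: the statement is squeezed out of lemmas~\ref{lemma:xixil} and~\ref{lemma:xilr0} once the case split is in place. The only points needing a moment's care are checking that the cited lemmas' hypotheses hold — in particular that $l \p w, r \p w \in \abc^{\ge 1}$, which is exactly where $\# w \ge 2$ enters — and noticing that, unlike most of the preceding results about $\xi$, this one does not require an induction on $\# w$.
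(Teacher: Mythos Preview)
Your proof is correct. The case split via observation~\ref{observ:xi0} and the handling of the ``both zero'' case via lemma~\ref{lemma:xilr0} match the paper exactly. In the ``both non-zero'' case, however, you take a different and shorter route: you invoke lemma~\ref{lemma:xixil} on $l(w)$ and $r(w)$ to force $\xi(l(w)) = \xi(m(w)) = \xi(r(w))$ under the assumption $\xi(m(w)) \ne 0$, reaching an immediate contradiction. The paper instead first disposes of $m(w) \in \crlT\odd \cup \crlCT\odd$ separately, then expands the recursive formula for $\xi$ on $l(w)$ and $r(w)$ and uses a sandwich inequality $-\xi(l^2(w)) < \xi(m(w)) < -\xi(r^2(w))$ (or the reverse) to conclude $\xi(m(w)) = 0$. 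Your argument is cleaner because lemma~\ref{lemma:xixil} already absorbs the $\crlT\even \cup \crlCT\even$ special case in its proof, so you avoid both the preliminary case and the inequality manipulation; the paper's version is more self-contained in the sense that it re-derives what it needs from the definition rather than appealing to a prior lemma.
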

\begin{proof}
    If $m \p w \in \crlT \odd \cup \crlCT \odd$ the statement follows from corollary~\ref{corll:xiTodd} so assume it is not.
    $\xi \p w = 0$ by observation \ref{observ:xi0} implies that $\xi \p{l \p w} = -\xi \p{r \p w}$. By lemma~\ref{lemma:xilr0} either $w \in \crl{T}^\even \cup \crl{CT}^\even$ or $w \in \crl \zero ^* \cup \crl \one ^*$ or $\xi \p{l \p w} = -\xi \p{r \p w} \ne 0$. However, the first case can be excluded since $\xi \p w = 0$. In the second case, $\xi \p{m \p w} \by{\eqref{eq:xipalindrome}} 0$. In the third case 
    \begin{align*}
        \sgn \p{
            \xi \p{l^2 \p w} + 
            \xi \p{m   \p w}
        } =
        -\sgn \p{
            \xi \p{m   \p w} +
            \xi \p{r^2 \p w}
        } \ne 
        0,
    \end{align*}
    which can be reformulated to
    \begin{align*}
        \sgn \p{
            \xi \p{m \p w} - 
            \p{-\xi \p{l^2 \p w}}
        } =
        \sgn \p{
            \p{-\xi \p{r^2 \p w}} - 
            \xi \p{m \p w}
        } \ne 
        0,
    \end{align*}
    so 
    \begin{align*}
        -\xi \p{l^2 \p w} <
        \xi \p{m \p w} < 
        -\xi \p{r^2 \p w}
        \lor 
        -\xi \p{l^2 \p w} >
        \xi \p{m \p w} > 
        -\xi \p{r^2 \p w}.
    \end{align*}
    It follows that $\xi \p{m \p w} = 0$.
\end{proof}

\refstepcounter{thrm} \label{corll:skipped}

\begin{remark}
    The number \ref{corll:skipped} is omitted in numbering the results to keep section~\ref{section:xi} and \ref{section:phi} having the same structure. There is nothing equivalent to say about the function $\xi$ as is said about the function $\phi$ in corollary~\ref{corll:phiodd}.
\end{remark}

\begin{thrm} \label{thrm:ximne0}
    Let $w \in \abc^{\ge 2}$ such that $\xi \p{l^2 \p w} = \xi \p{r^2 \p w} \ne 0$. Then 
    \begin{align}
        \xi \p{m \p w} \ne 0.
        \label{eq:xillxirrxim0}
    \end{align}
\end{thrm}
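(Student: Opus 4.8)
\emph{Proof sketch.} The plan is a direct case split on the recursive clause defining $\xi$ at $m \p w$, leaning on lemmas \ref{lemma:xixil} and \ref{lemma:xilr0}; no induction is needed, since those two lemmas already carry the weight. First I would note that the hypothesis forces $\# w \ge 4$, because $l^2 \p w$ and $r^2 \p w$ have length $\# w - 2$ and $\xi$ vanishes on every string of length at most $1$. Then I would record the identities $l \p{m \p w} = r \p{l^2 \p w}$ and $r \p{m \p w} = l \p{r^2 \p w}$, both immediate from $l \circ r = r \circ l$ (equation \eqref{eq:lrrl}); these make $\xi$ of the two halves of $m \p w$ comparable with the hypothesised values $\xi \p{l^2 \p w}$ and $\xi \p{r^2 \p w}$.

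If $m \p w \in \crl T ^ \even \cup \crl{CT} ^ \even$ then $\xi \p{m \p w} \in \crl{-1, 1}$ by definition and there is nothing to prove, so the remaining case has
\begin{align*}
    \xi \p{m \p w} = \sgn \p{\xi \p{r \p{l^2 \p w}} + \xi \p{l \p{r^2 \p w}}}.
\end{align*}
Here I would invoke lemma \ref{lemma:xixil}: since $\xi \p{l^2 \p w} \ne 0$, it forces $\xi \p{r \p{l^2 \p w}} \in \crl{0, \xi \p{l^2 \p w}}$, and symmetrically $\xi \p{l \p{r^2 \p w}} \in \crl{0, \xi \p{r^2 \p w}}$. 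Writing $c := \xi \p{l^2 \p w} = \xi \p{r^2 \p w} \in \crl{-1, 1}$, both summands above then lie in $\crl{0, c}$, so unless they both vanish their sum is a positive multiple of $c$ and $\xi \p{m \p w} = c \ne 0$.

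The only case left is $\xi \p{l \p{m \p w}} = \xi \p{r \p{m \p w}} = 0$, which I would rule out. Applying lemma \ref{lemma:xilr0} to $m \p w$, and using $m \p w \notin \crl T ^ \even \cup \crl{CT} ^ \even$ while $\xi$ takes only the values $-1$, $0$, $1$, the one surviving possibility is $\xi \p{m \p w} = 0$ with $m \p w \in \crl \zero ^* \cup \crl \one ^*$. Writing $w = w_0 \, \p{m \p w} \, w_{\# w - 1}$ and using that $m \p w$ is a constant block of length $\# w - 2 \ge 2$, in the case $m \p w = \zero ^ {\# w - 2}$ one gets $l^2 \p w = w_0 \, \zero ^ {\# w - 3}$ and $r^2 \p w = \zero ^ {\# w - 3} \, w_{\# w - 1}$. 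For $\xi \p{l^2 \p w}$ and $\xi \p{r^2 \p w}$ both to be nonzero, corollary \ref{corll:xipalindrome}, lemma \ref{lemma:xi0i1j} and corollary \ref{corll:xi1j0i} force $w_0 = \one$ and $w_{\# w - 1} = \one$, giving $\xi \p{l^2 \p w} = -1$ while $\xi \p{r^2 \p w} = 1$ — contradicting $\xi \p{l^2 \p w} = \xi \p{r^2 \p w}$. The case $m \p w = \one ^ {\# w - 2}$ is symmetric, or follows from it via $C$ and lemma \ref{lemma:xiCxiR}. Hence this case does not occur, and $\xi \p{m \p w} \ne 0$ always.

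I expect the degenerate case at the end to be the only real obstacle. The crucial observation there is that once lemmas \ref{lemma:xilr0} and \ref{lemma:xixil} have squeezed $m \p w$ down to a constant block, the two outer length-$(\# w - 2)$ windows $l^2 \p w$ and $r^2 \p w$ are compelled to carry opposite $\xi$-signs — exactly what the hypothesis $\xi \p{l^2 \p w} = \xi \p{r^2 \p w} \ne 0$ rules out; everything preceding it is routine unwinding of the recursion.
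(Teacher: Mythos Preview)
Your argument is correct, and it takes a genuinely different path from the paper's. The paper argues by contradiction: it assumes $\cp \xi m w = 0$, expands $\cp \xi {l^2} w$ and $\cp \xi {r^2} w$ via the recursive clause (which requires first disposing of the boundary cases $l^2 \p w \in \crl T^\even \cup \crl{CT}^\even$ and $r^2 \p w \in \crl T^\even \cup \crl{CT}^\even$ by hand, using lemma~\ref{lemma:xiTk0}), and then runs the three-value sign-inequality trick from the proof of lemma~\ref{lemma:xilr0} to force $\p{\xi \circ l^2 \circ r} \p w = 0$. You instead read $\cp \xi m w$ directly through the recursion at $m \p w$ and use lemma~\ref{lemma:xixil} to pin each summand into $\crl{0, c}$; this bypasses both the side cases on $l^2 \p w, r^2 \p w$ and the inequality manoeuvre entirely. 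From that point on the two proofs converge: both invoke lemma~\ref{lemma:xilr0} to reduce to $m \p w \in \crl \zero ^* \cup \crl \one ^*$ and finish with the same contradiction $\cp \xi {l^2} w = -1 \ne 1 = \cp \xi {r^2} w$. Your route is shorter and makes more systematic use of the already-established lemma~\ref{lemma:xixil}; the paper's route stays closer to first principles and re-derives the constraint via the inequality argument it had just introduced.
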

\begin{proof}
    First of all, note that $\# w \ge 4$, as otherwise $\xi \p{l^2 \p w} = 0$. For $\# w =\nolinebreak 4$, $w \in \crl{T^4, CT^4}$, so $\xi \p{m \p w} \ne 0$.
    Assume now that $\# w \ge 5$ and start with considering the case $l^2 \p w \in \crl T ^\even$. Then $m \p w \in \crl{CT^{\# w -3} \zero, CT^{\# w -3} \one}$, so $\xi \p{m \p w} \by[\ne]{\eqref{eq:xiCTk0}, \eqref{eq:xiCTk1}} 0$. Similarly, the cases $l^2 \p w \in \crl{CT}^\even \lor r^2 \p w \in \crl{T}^\even \cup \crl{CT}^\even$ can be considered. 
    In all other cases, assume $\cp \xi m w = 0$. Then 
    \begin{align}
        \p{ \xi \circ l^2 \circ r} \p w 
        \by{\eqref{eq:xi0}}
        \p{-\xi \circ l \circ r^2} \p w
        \label{eq:proof:xil2rxilr2},
    \end{align}
    so
    \begin{align*}
        \begin{split}
            \cp \xi {l^2} w = \cp \xi {r^2} w \ne 0 
            \iff &
        \\
            \sgn \p{
                \p{\xi \circ l^3} \p w + 
                \p{\xi \circ l^2 \circ r} \p w
            } = &
            \sgn \p{
                \p{\xi \circ l \circ r^2} \p w + 
                \p{\xi \circ r^3} \p w
            } \ne 0
        \\
            \by[\iff]{\eqref{eq:proof:xil2rxilr2}} &
        \\
            \sgn \p{
                \p{\xi \circ l^3} \p w + 
                \p{\xi \circ l^2 \circ r} \p w
            } = &
            \sgn \p{
                \p{\xi \circ r^3} \p w -
                \p{\xi \circ l^2 \circ r} \p w 
            } \ne 0
        \\ 
            \implies &
            \p{\xi \circ l^2 \circ r} \p w = 0.
        \end{split}
    \end{align*}
    From $\p{\xi \circ m} \p w = \p{\xi \circ l^2 \circ r} \p w = \p{\xi \circ l \circ r^2} \p w = 0$ it follows by lemma~\ref{lemma:xilr0} that $m \p w \in \crl \zero ^* \cup \crl \one ^*$. However, then 
    \begin{align*}
        \cp \xi {l^2} w \le 0 \le \cp \xi {r^2} w,
    \end{align*}
    a contradiction. 
\end{proof}

\begin{remark}
    One could try to redefine $\xi$ in an attempt to shrink the subset of $\abc^*$ on which $\xi$ is $0$. However theorem~\ref{thrm:ximne0} shows that that is not easily possible: Already now $\xi$ is zero only for vertices in the de Bruijn graph that have preceding and succeeding vertices with different $\xi$-value. 
\end{remark}

\begin{lemma} \label{lemma:ximw0w}
    Let $n \ge 2, w \in \abc^n$ such that $\cp \xi m w = 0$. Then 
    \begin{align}
        &\cp \xi l w = 
        \cp \xi r w = 
        \spacemath 0 {-1} \iff 
        w \in \crl{\zero^n, \one^n} 
        \label{eq:ximw0w0n1n}
    \\ 
        &\cp \xi l w = 
        \cp \xi r w = 
        \spacemath 1 {-1} \iff 
        w \in \crl{
            \zero T^{n-1},
            T^{n-1} \one
        } \land 
        n \isodd
        \label{eq:ximw0w0Tn1Tn11}
    \\         
        &\cp \xi l w = \cp \xi r w = -1 \iff 
        w \in \crl{
            CT^{n-1} \zero,
            \one CT^{n-1}
        } \land 
        n \isodd
        \label{eq:ximw0wCTn101CTn1}
    \\ 
        &\cp \xi l w = \spacemath 0 {-1} \land 
        \cp \xi r w = \spacemath 1 {-1} \iff 
        w = \zero^{n-1} \one
        \label{eq:ximw0w0n11}
    \\ 
        &\cp \xi l w = \spacemath 0 {-1} \land 
        \cp \xi r w = -1 \iff 
        w = \one^{n-1} \zero
        \label{eq:ximw0w1n10}
    \\ 
        &\cp \xi l w = \spacemath 1 {-1} \land 
        \cp \xi r w = \spacemath 0 {-1} \iff 
        w = \zero \one^{n-1}
        \label{eq:ximw0w01n1}
    \\ 
        &\cp \xi l w = -1 \land 
        \cp \xi r w = \spacemath 0 {-1} \iff 
        w = \one \zero^{n-1}
        \label{eq:ximw0w10n1}.
    \end{align}
\end{lemma}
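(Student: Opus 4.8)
The plan is to prove three of the seven biconditionals — \eqref{eq:ximw0w0n1n}, \eqref{eq:ximw0w0Tn1Tn11} and \eqref{eq:ximw0w0n11} — and to obtain the remaining four from the symmetries of Lemma~\ref{lemma:xiCxiR}. If $w$ meets the hypothesis then so do $C\p w$ and $R\p w$ (by \eqref{eq:CmmC}, \eqref{eq:RmmR} and \eqref{eq:xiCxiR}), and by \eqref{eq:CllCCrrC}, \eqref{eq:RlrRRrlR} and \eqref{eq:xiCxiR} the replacement $w\mapsto C\p w$ turns $\p{\cp\xi l w,\cp\xi r w}$ into $\p{-\cp\xi l w,-\cp\xi r w}$ and $w\mapsto R\p w$ turns it into $\p{-\cp\xi r w,-\cp\xi l w}$, while the right-hand sets transform accordingly; here one uses $C\p{T^{n-1}}=CT^{n-1}$ and, since the relevant $n-1$ is even, $R\p{T^{n-1}}=CT^{n-1}$, so that $R$ carries $\crl{\zero T^{n-1},T^{n-1}\one}$ onto $\crl{CT^{n-1}\zero,\one CT^{n-1}}$. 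Since $\langle C,R\rangle$ permutes \eqref{eq:ximw0w0n11}, \eqref{eq:ximw0w1n10}, \eqref{eq:ximw0w01n1}, \eqref{eq:ximw0w10n1} transitively and sends \eqref{eq:ximw0w0Tn1Tn11} to \eqref{eq:ximw0wCTn101CTn1}, these three cases suffice. I treat $n\ge4$; the small cases $n\le3$, where $m\p w$ has length at most $1$, are examined separately.

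The main tool is the observation that \emph{$\cp\xi m w=0$ together with $\cp\xi l w=0$ forces $l\p w\in\crl{\zero^{n-1},\one^{n-1}}$}, and likewise with $r$ in place of $l$. Indeed $l\p w\ne\upepsilon$, so $\cp\xi l w=0$ rules out $l\p w\in\crlT\even\cup\crlCT\even$; hence the recursion for $\xi$ gives $\cp\xi l w=\sgn\p{\p{\xi\circ l^2}\p w+\cp\xi m w}=\p{\xi\circ l^2}\p w$, so $\p{\xi\circ l^2}\p w=0$ as well. Then both length-$(n-2)$ halves of $l\p w$ are mapped to $0$ by $\xi$, and Lemma~\ref{lemma:xilr0}, \eqref{eq:xilr0xi0}, applied to $l\p w$ yields $l\p w\in\crl\zero^*\cup\crl\one^*$. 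I shall also use $\xi\p{\zero T^k}=1$ and $\xi\p{\one T^k}=-1$ for odd $k\ge3$, which follow from \eqref{eq:xiTk0} and Lemma~\ref{lemma:xiCxiR} ($T^k$ is a palindrome for odd $k$, so $R\p{\zero T^k}=T^k\zero$, and $\zero\,CT^k=T^{k+1}$).

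The three cases then run as follows. For \eqref{eq:ximw0w0n1n}: the $\Leftarrow$ direction is Corollary~\ref{corll:xipalindrome}; for the $\Rightarrow$ direction, the observation makes $l\p w$ and $r\p w$ constant strings, and since $n\ge4$ they overlap in the nonempty infix $m\p w$, so $w$ is constant, i.e.\ $w\in\crl{\zero^n,\one^n}$. For \eqref{eq:ximw0w0n11}: the observation gives $l\p w\in\crl{\zero^{n-1},\one^{n-1}}$, so $w$ is $\zero^n$, $\zero^{n-1}\one$, $\one^{n-1}\zero$ or $\one^n$, and computing $\cp\xi r w$ via \eqref{eq:xi0i1j} and \eqref{eq:xi1j0i} singles out $w=\zero^{n-1}\one$, which then satisfies the left-hand side. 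For \eqref{eq:ximw0w0Tn1Tn11} (the case $\cp\xi l w=\cp\xi r w=1$): this value excludes $l\p w,r\p w\in\crlCT\even$. If $l\p w\in\crlT\even$ then $l\p w=T^{n-1}$ (forcing $n$ odd) and $r\p w=CT^{n-2}w_{n-1}$, which by \eqref{eq:xiCTk0} and \eqref{eq:xiCTk1} forces $w_{n-1}=\one$, i.e.\ $w=T^{n-1}\one$; if instead $r\p w\in\crlT\even$ then symmetrically $w=\zero T^{n-1}$, using the auxiliary values; and if neither holds, then $\cp\xi l w=\p{\xi\circ l^2}\p w$ and $\cp\xi r w=\p{\xi\circ r^2}\p w$, so $\p{\xi\circ l^2}\p w=\p{\xi\circ r^2}\p w=1\ne0$ and Theorem~\ref{thrm:ximne0} contradicts $\cp\xi m w=0$. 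Hence $w\in\crl{\zero T^{n-1},T^{n-1}\one}$ with $n$ odd, and both strings are checked against the left-hand side.

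The delicate step is this last three-way split: one must recognise that the ``neither'' branch is precisely the hypothesis of Theorem~\ref{thrm:ximne0}, and get the bookkeeping right — the rewriting $r\p w=CT^{n-2}w_{n-1}$ when $l\p w=T^{n-1}$, the parity-dependent values of $\xi$ on $\zero T^k$, $CT^k\one$ and their reverses and complements, and, in the symmetry reduction, the fact that $T^{n-1}$ is \emph{not} a palindrome for even $n-1$, so that $R$ genuinely sends it to $CT^{n-1}$. (A second point of care is the boundary: the auxiliary values above require $k\ge3$, which is why the small cases $n\le3$ must be handled on their own.) Everything else reduces to the earlier results.
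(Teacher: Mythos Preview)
Your argument is correct and mirrors the paper's: the cases where one of $\cp\xi l w,\cp\xi r w$ vanishes are handled via Lemma~\ref{lemma:xilr0} (forcing $l\p w$ or $r\p w$ to be constant), and the case $\cp\xi l w=\cp\xi r w\ne 0$ is reduced to an alternating infix by invoking Theorem~\ref{thrm:ximne0} on the generic branch. The only differences are organizational---you use the $C,R$ symmetries to cut the list of biconditionals to three, and you split the hard case on whether $l\p w$ or $r\p w$ lies in $\crlT\even$, whereas the paper equivalently splits on whether $m\p w\in\crlT\odd\cup\crlCT\odd$ and then discards palindromes.
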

\begin{proof}
    \
    \begin{description}
        \item [
            \eqref{eq:ximw0w0n1n}, \eqref{eq:ximw0w0n11}, \eqref{eq:ximw0w1n10}
        ]
        From $\cp \xi l w = \cp \xi m w = 0$ it follows that $$\cp \xi {l^2} w = 0.$$ Hence by lemma~\ref{lemma:xilr0} $l \p w \in \crl{\zero^{n-1}, \one^{n-1}}$
        \item [
            \eqref{eq:ximw0w01n1}, \eqref{eq:ximw0w10n1}
        ]
        Similarly one finds in these cases that $r \p w \in \crl{\zero^{n-1}, \one^{n-1}}$. \item [
            \eqref{eq:ximw0w0Tn1Tn11}, 
            \eqref{eq:ximw0wCTn101CTn1}
        ]
        Assume $m \p w \notin \crlT \odd \cup \crlCT \odd$. Then $$\cp \xi {l^2} w = \cp \xi {r^2} w \ne 0$$ which according to theorem~\ref{thrm:ximne0} implies that $m \p w \ne 0$, a contradiction. Also, $w$ must not be a palindrome, leaving exactly these four cases. 
    \end{description}
\end{proof}

\begin{remark}
    Already at this point, $\# \crl{w \in \abc^n; \xi \p w = 0}$ could be investigated. However, that result shall be postponed until section \ref{section:Numbernoncolourable}, as the relevance of that set and its size will be more obvious by then. 
\end{remark}

\begin{lemma} \label{lemma:xilximnegxir}
    Let $w \in \abc^{\ge 4}$ such that $\cp \xi l w \ne \cp \xi m w = -1 \ne \cp \xi r w$. Then $w \in \crlT \even$. 
\end{lemma}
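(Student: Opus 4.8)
The plan is to first determine that $\cp \xi l w = \cp \xi r w = 0$ using the one-sided monotonicity lemmas, and then to read off $w \in \crlT \even$ from Lemma~\ref{lemma:xilr0} by ruling out the two other possible values of $\xi \p w$.

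First I would observe that $r \p{l \p w} = m \p w = l \p{r \p w}$, so $\xi \p{r \p{l \p w}} = \xi \p{l \p{r \p w}} = -1 \ne 0$. Applying \eqref{eq:xixil} to the string $l \p w$ (which lies in $\abc^{\ge 1}$ since $\# w \ge 4$): if $\cp \xi l w \ne 0$, then $\cp \xi l w = \xi \p{r \p{l \p w}} = -1$, contradicting the hypothesis $\cp \xi l w \ne -1$; hence $\cp \xi l w = 0$. Symmetrically, applying \eqref{eq:xixir} to $r \p w$: if $\cp \xi r w \ne 0$, then $\cp \xi r w = \xi \p{l \p{r \p w}} = -1$, contradicting $\cp \xi r w \ne -1$; hence $\cp \xi r w = 0$.

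With $\cp \xi l w = \cp \xi r w = 0$, Lemma~\ref{lemma:xilr0} becomes applicable, and since $\img \xi = \crl{-1, 0, 1}$ there are three cases for $\xi \p w$. If $\xi \p w = 0$, then \eqref{eq:xilr0xi0} gives $w \in \crl \zero ^* \cup \crl \one ^*$, so $m \p w$ is a constant string, hence a palindrome, and $\cp \xi m w \by{\eqref{eq:xipalindrome}} 0$, contradicting $\cp \xi m w = -1$. If $\xi \p w = -1$, then \eqref{eq:xilr0xim1} gives $w \in \crlCT \even$; but deleting the first and last symbol of $CT^n$ with $n$ even leaves $T^{n-2}$, so $\cp \xi m w = 1$, again contradicting $\cp \xi m w = -1$. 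The only remaining possibility is $\xi \p w = 1$, and then \eqref{eq:xilr0xi1} yields $w \in \crlT \even$, which is the claim.

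I do not expect a serious obstacle here; the points requiring care are getting the orientation of the two one-sided lemmas \eqref{eq:xixil}/\eqref{eq:xixir} right and checking that their hypothesis (the argument of $\xi$ being nonzero) is met by the middle infix $m \p w$, together with the elementary bookkeeping that $m$ sends $\crlCT \even$ into $\crlT \even$ and constant strings to constant strings. The hypothesis $\# w \ge 4$ is precisely what is needed so that $l \p w, r \p w \in \abc^{\ge 1}$ and $m \p w \ne \upepsilon$, so no separate base case is required.
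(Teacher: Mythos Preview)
Your proof is correct and follows essentially the same route as the paper: first force $\cp \xi l w = \cp \xi r w = 0$ via Lemma~\ref{lemma:xixil}, then invoke Lemma~\ref{lemma:xilr0} and eliminate all but the $\crlT \even$ option by checking $\cp \xi m w$. Your write-up is simply more explicit---the paper collapses your three-case elimination into the single sentence ``only $w \in \crlT \even$ satisfies $\cp \xi m w = -1$''---but the logic is identical. One small quibble with your closing remark: $\# w \ge 4$ is not \emph{precisely} what is needed for $l\p w, r\p w \in \abc^{\ge 1}$ and $m\p w \ne \upepsilon$ (those need only $\# w \ge 2$ and $\# w \ge 3$ respectively); rather, the hypothesis $\cp \xi m w = -1$ already forces $\#\p{m\p w} \ge 2$ since $\xi$ vanishes on all strings of length at most~$1$, so the bound $\# w \ge 4$ is in fact implied and merely stated for clarity.
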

\begin{proof}
    Note that $\cp \xi l w = \cp \xi r w = 0$ as anything else would contradict lemma~\ref{lemma:xixil}. By lemma~\ref{lemma:xilr0} $$w \in \crl \zero ^* \cup \crl \one ^* \cup \crl T ^\even \cup \crl{CT}^\even,$$ but only $w \in \crl T ^\even$ satisfies $\cp \xi m w = -1$. 
\end{proof}

\begin{thrm} \label{thrm:existsxineg}
    Let $k, n \in \N$ and $w \in \abc^{k+n}$ be such that $$\slice w 0 n = \slice w k {k+n} \land \exists \i < k \quad \xi \p{\slice w \i {\i+n}} = 1.$$ Then 
    \begin{align*}
        \exists \j < k \quad 
        \xi \p{\slice w \j {\j+n}} = -1.
    \end{align*}
\end{thrm}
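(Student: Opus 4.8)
The plan is to read the hypothesis as a statement about a \emph{periodic} word and to induct on the window length~$n$.

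Since $\slice w 0 n = \slice w k {k+n}$ forces $w_i = w_{i \bmod k}$ for every index, $w$ is periodic of period~$k$, so the window $v^{(i)} := \slice w i {i+n}$ depends only on $i \bmod k$, and the claim becomes: if one of $v^{(0)},\dots,v^{(k-1)}$ has $\xi = 1$, then one of them has $\xi = -1$. By Lemma~\ref{lemma:xiCxiR} (complementing~$w$ negates all the $\xi(v^{(i)})$ and keeps the word periodic) this is equivalent to: the values $\xi(v^{(i)})$ are all~$0$, or else comprise both $+1$ and~$-1$. The engine of the induction is that $(l(w),k)$ and $(m(w),k)$ are again instances of the theorem, with window lengths $n-1$ and $n-2$ and windows $\hat v^{(i)} := l(v^{(i)})$ and $m(v^{(i)})$ respectively. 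The base cases are immediate: for $n=1$ every window has $\xi=0$, so the hypothesis is vacuous; for $n=2$ a non-constant periodic word contains both $\bm{01}$ and $\bm{10}$.

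For the step ($n\ge 3$), suppose $\xi(v^{(i_0)}) = 1$ while, for contradiction, no $v^{(j)}$ has $\xi=-1$. First I would produce a length-$(n-1)$ window of value~$1$: if $v^{(i_0)} = T^n$ then $n$ is even, the next window is forced to be $CT^{n-1}\one$ (the alternative $CT^{n-1}\zero$ has $\xi=-1$ by \eqref{eq:xiCTk0}), and its right shortening $T^{n-2}\one$ has $\xi=1$ by \eqref{eq:xiTk1}; otherwise $1 = \xi(v^{(i_0)}) = \sgn\p{\xi(\hat v^{(i_0)}) + \xi(\hat v^{(i_0+1)})}$ already forces a summand to be~$1$. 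The inductive hypothesis on $(l(w),k)$ then yields some $\hat v^{(c)}$ with $\xi=-1$. As $\hat v^{(c)} = r(v^{(c-1)}) = l(v^{(c)})$ and neither $v^{(c-1)}$ nor $v^{(c)}$ may equal~$-1$, Lemma~\ref{lemma:xixil} pins $\xi(v^{(c-1)}) = \xi(v^{(c)}) = 0$, whence observation~\ref{observ:xi0} gives $\xi(\hat v^{(c-1)}) = \xi(\hat v^{(c+1)}) = 1$. Propagating (a length-$n$ window of value~$0$ flips the signs of its two overlap windows; a length-$(n-1)$ window of value~$-1$ pins both its parents to~$0$), and using that not all $v^{(i)}$ are~$0$, one isolates a \emph{bounded} maximal block $\mathcal I = \crl{p+1,\dots,q-1}$ on which $\xi(v^{(i)}) = 0$, with $\xi(v^{(p)}) = \xi(v^{(q)}) = 1$ and the overlap windows $\hat v^{(p+1)},\dots,\hat v^{(q)}$ alternating $+1,-1,+1,\dots$; the degenerate case that these are all~$0$ is excluded by Lemma~\ref{lemma:xilr0} (it would make each $v^{(i)}$ in $\mathcal I$ equal to $\zero^n$ or $\one^n$, hence the whole block constant, whence the flanking window $v^{(p)}$ or $v^{(q)}$ sits next to $\zero^{n-1}$ or $\one^{n-1}$ and cannot have $\xi=1$ by Lemma~\ref{lemma:xi0i1j} and Corollary~\ref{corll:xi1j0i}).

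The heart of the proof is to refute this alternating block. For each interior $i \in \crl{p+2,\dots,q-1}$ both shortenings of $\hat v^{(i)}$ are~$0$ — they are $m(v^{(i-1)})$ and $m(v^{(i)})$, made~$0$ by Lemma~\ref{lemma:xi0xim0} from $\xi(v^{(i-1)}) = \xi(v^{(i)}) = 0$ — so Lemma~\ref{lemma:xilr0} forces $\hat v^{(i)} \in \crl{T^{n-1}, CT^{n-1}}$. If $n$ is even this is impossible, since $T^{n-1}, CT^{n-1}$ have odd length and thus $\xi=0$ by Corollary~\ref{corll:xiTodd}, contradicting $\xi(\hat v^{(i)}) = \pm 1$. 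If $n$ is odd, the forced alternation $T^{n-1}, CT^{n-1}, T^{n-1},\dots$ makes $w$ read $\zero\one\zero\one\cdots$ throughout $\mathcal I$; tracking the $\xi=0$ windows across the block (each then forced to be $T^n$ or $CT^n$) and matching $\xi=1$ at the two ends against Lemma~\ref{lemma:xiTk0} and the values $\xi(\zero T^{n-1}) = 1$, $\xi(\one CT^{n-1}) = -1$ (obtained from Lemma~\ref{lemma:xiCxiR}), one finds $v^{(p)} = \zero T^{n-1}$ and $v^{(q)} = T^{n-1}\one$, so~$w$ exhibits around this block a segment made of a $\zero\zero$, then an alternating run, then a $\one\one$. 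Deriving a contradiction from this segment together with the global period~$k$ and the ban on length-$n$ windows of value~$-1$ — for $n=3$ this is the remark that forbidding $\one\zero^{n-1}$-type windows would force the runs of~$\one$'s to fill an entire period, clashing with the block $\zero\zero$ — is the step I expect to be the main obstacle; everything above, including the even case, is routine given the lemmas of Section~\ref{section:xi}.
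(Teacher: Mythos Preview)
Your outer scaffold matches the paper's: induct on the window length, push a $\xi=1$ window down one level, invoke the inductive hypothesis there to obtain a $\xi=-1$ window of length $n-1$, and then lift it back up. Where you and the paper diverge is precisely in that lifting step, and that is also where your argument is incomplete.

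The paper does not analyse the block structure at all. Instead it observes (Lemma~\ref{lemma:xilximnegxir}) that if $\cp \xi m v = -1$ while $\cp \xi l v,\cp \xi r v \ne -1$, then $v\in\crlT{\even}$; in other words, the \emph{only} obstruction to lifting a $\xi=-1$ from length $n-1$ to length $n$ is that the $-1$ window sits as the middle of a copy of $T^{n+1}$. The paper eliminates this obstruction once and for all by passing to an auxiliary word $\tilde w$ in which every occurrence of $T^{n+1}$ has been shortened to $T^{n-1}$; the bookkeeping is that length-$(n-1)$ substrings are essentially preserved (only $CT^{n-1}$ may disappear, and it has $\xi\ne1$), so the inductive hypothesis still applies, and now the lift is automatic.

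Your alternative route --- pinning the two parents of the bad $\hat v^{(c)}$ to $0$, propagating to a maximal $\xi=0$ block with alternating overlap signs, and then invoking Lemma~\ref{lemma:xilr0} on the interior --- is correct as far as it goes, and for even $n$ it really does finish the proof in one stroke (the interior $\hat v^{(i)}$ would have to lie in $\crlT{\even}\cup\crlCT{\even}$ while having odd length). For odd $n$, however, the structural analysis only tells you that some stretch of $w$ reads $\zero\zero(\one\zero)^m\one\one$; this alone does not contradict the ban on $\xi=-1$ windows, as you yourself note. Your $n=3$ heuristic (``once $\one\one$ appears, banning $\bm{110}$ forces all $\one$'s'') does not scale: for larger odd $n$ the set $\{v\in\abc^n:\xi(v)=-1\}$ is not described by any simple local rule on runs, so there is no evident combinatorial sieve that forces a global contradiction from the one segment you have isolated. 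Completing your approach for odd $n$ would effectively require re-proving Lemma~\ref{lemma:xilximnegxir} or something equivalent, at which point the paper's $\tilde w$ construction is the cleaner packaging of the same idea.

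In short: the even case is fine and is a pleasant variant of the paper's argument, but the odd case has a genuine gap at exactly the place you flagged, and the paper's Lemma~\ref{lemma:xilximnegxir}~$+$~$\tilde w$ device is what fills it.
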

\begin{proof}
    Induction on $n$. As it suffices to consider cycles in the $n$"~dimensional de Bruijn graph (instead of any closed walk $w$ represents), there are  only finitely many cases to consider for $n \in \crl{0, 1, 2}$. Now assume the statement holds for a certain $n$.     
    Let $w \in \abc^{k+n+1}$ such that $$\slice w 0 {n+1} = \slice w k {k+n+1} \land \exists \i < k \quad \xi \p{\slice w \i {\i+n+1}} = 1.$$     
    If $\slice w \i {\i+n+2} \in \crl T ^\odd$ then $\xi \p{\slice w {\i+1} {\i+n+2}} = -1$, which would already establish the statement and if $\slice w \i {\i+n+2} = T^{n+1} \one$ then $\xi \p{\slice w {\i + 2} {\i + n + 2}} = 1$. Otherwise $\slice w \i {\i+n+1} \notin \crlT \even$ so either $\xi \p{\slice w \i {\i+n}} = 1 \lor \xi \p{\slice w {\i+1} {\i+n+1}} = 1$. In any case either the statement is already established or there is a substring of length $n$ to which $\xi$ assigns the value $1$. 
    
    Define $\tilde w$ by replacing any occurrence of $T^{n+2}$ in $w$ by $T^n$, meaning that $T^{n+2}$ is not a substring of $\tilde w$. Set $\tilde k = \# \tilde w - \p{n + 1}$. 
    Note that $$\crl{\slice{\tilde w}{\i}{\i+n}; \i < \tilde k} \subseteq \crl{\slice w \i {\i+n}; \i < k}$$ (in words: any substring of length $n$ of $\tilde w$ is also a substring of $w$), however $$\crl{\slice w \i {\i+n}; \i < k} \subseteq \crl{\slice{\tilde w}{\i}{\i+n}; \i < \tilde k} \cup \crl{CT^n}.$$ (In words: the only substring of length $n$ that has been removed by constructing $\tilde w$ from $w$ is $CT^n$.) 
    Since $\xi \p{CT^n} \ne 1$, it follows that still $\exists \i < \tilde k \quad \xi \p{\slice{\tilde w}{\i}{\i+n}} = 1$. This also shows that $\tilde k \ge 1$ and hence $\# \tilde w \ge n + 2$. 
    
    First, consider the case $\slice {\tilde w} 0 n \ne \slice{\tilde w}{\tilde k}{\tilde k + n}$. That implies $\slice{\tilde w}{\tilde k}{\tilde k + n + 1} \ne \slice w k {k + n + 1}$, which is possible only if $\slice w {k-1} {k+n+1} = T^{n+2}$, so $\slice w 0 {n+1} = \slice w k {k+n+1} = CT^{n+1}$ and hence $\slice{\tilde w}{1}{n+1} = \slice{\tilde w}{\tilde k + 1}{\tilde k + n + 1} = T^n$. By the induction hypothesis, 
    \begin{align}
        \exists \j \quad 
        1 \le \j < \tilde k + 1 \land 
        \xi \p{\slice{\tilde w}{\j}{\j+n}} = 
        -1
        \label{eq:proof:existsj}.
    \end{align}    
    If $\slice{\tilde w}{0}{n} = \slice{\tilde w}{\tilde k}{\tilde k + n}$, the induction hypothesis directly gives $
        \exists \j < \tilde k \quad 
        \xi \p{\slice{\tilde w}{\j}{\j + n}} = 
        -1
    $. If $\j = 0$, one can also pick $\j = \tilde k$, so also in this case equation \eqref{eq:proof:existsj} holds. 
    
    Then by lemma~\ref{lemma:xilximnegxir} 
    \begin{align*}
        \xi \p{\slice{\tilde w}{\j-1}{\j+n}} = -1 \lor 
        \xi \p{\slice{\tilde w}{\j}{\j+n+1}} = -1 \lor 
        \slice{\tilde w}{\j-1}{\j+n+1} = T^{n+2},
    \end{align*}
    where the third option is excluded by the definition of $\tilde w$. The statement now follows from the fact that 
    \begin{align*}
        \crl{
            \slice{\tilde w}{\i}{\i+n+1};
            \i < \tilde k
        } \subseteq
        \crl{
            \slice{w}{\i}{\i+n+1};
            \i < k
        }.
    \end{align*}
    (In words: any substring of length $n + 1$ of $\tilde w$ is also a substring of $w$.)
\end{proof}

\begin{remark}
    The statement also holds when switching the rolls of $1$ and $-1$; the proof is analogous. 
    
    Using the notion of $\xi$ as defining a left, a centre and a right part of the de Bruijn graph, theorem~\ref{thrm:existsxineg} tells that there are no non-empty closed walks in the right part. Figure~\ref{fig:deBruijn4xi} does not contain non-trivial closed walks in the blue (or red) highlighted area. 
    
    The proof idea is simpler than it seems: A string representing a walk in a certain de Bruijn graph also represents a walk in the de Bruijn graph one dimension lower. If a walk passes a vertex $v$ for which $\xi \p v = -1$ of a graph by the construction of the $\xi$ function it will also do so in a higher dimensional de Bruijn graph -- with one notable exception: The vertex $CT^n$ for even $n$. Take for example the walk $\bm{0101}$ in the $2$-dimensional de Bruijn graph. Glancing at figure~\ref{fig:deBruijn2xi} shows that this walk passes both the area with blue and the area with red background. Now consider the same string a walk through the $3$-dimensional de Bruijn graph. A look at figure~\ref{fig:deBruijn3xi} tells that the graph does not pass the area with red background. This one exception is what makes the construction of $\tilde w$ necessary. 
\end{remark}

\begin{corll}
    Let $n \in \N$. The \sft{} generated by prohibiting the strings in $$\crl{w \in \abc^n; \xi \p w \ne 1}$$ is empty. 
\end{corll}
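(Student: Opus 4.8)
The plan is to argue by contradiction and reduce everything to Theorem~\ref{thrm:existsxineg}. First I would unwind the definition: the prohibited strings are exactly the $w \in \abc^n$ with $\xi \p w \ne 1$, so the allowed length-$n$ strings are precisely those with $\xi \p w = 1$, and a point of the \sft{} is a bi-infinite sequence over $\crl{\zero, \one}$ every length-$n$ factor of which has $\xi$-value $1$. The case $n = 0$ is disposed of immediately, since $\xi \p \upepsilon = 0 \ne 1$ makes $\upepsilon$ a prohibited string and every point contains $\upepsilon$; so from now on I would take $n \ge 1$ and assume, for contradiction, that the \sft{} contains a point $x$.

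The next step is to manufacture, from $x$, a finite string of the exact shape that Theorem~\ref{thrm:existsxineg} accepts. Because $\abc^n$ is finite, some length-$n$ word occurs as a factor of $x$ at two distinct positions; reading $x$ off from the earlier of these to just past the later one produces a string $w \in \abc^{k+n}$ with $k \ge 1$ and $\slice w 0 n = \slice w k {k+n}$, and every length-$n$ factor $\slice w \i {\i+n}$ ($\i \le k$) of $w$ is a factor of $x$, hence satisfies $\xi \p{\slice w \i {\i+n}} = 1$. (Equivalently, one can cite the standard fact that a nonempty \sft{} over a finite alphabet contains a periodic point and let $w$ be $k+n$ consecutive letters of a single period.) In particular $\xi \p{\slice w 0 n} = 1$ and $0 < k$, so Theorem~\ref{thrm:existsxineg} yields some $\j < k$ with $\xi \p{\slice w \j {\j+n}} = -1$. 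But $\j + n < k + n$, so $\slice w \j {\j+n}$ is a length-$n$ factor of $x$, whence $\xi \p{\slice w \j {\j+n}} = 1$ — contradicting $1 \ne -1$. Therefore the \sft{} is empty.

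The main obstacle is the middle step, extracting the closed walk $w$ from the hypothetical point $x$, but this is only the familiar pigeonhole/recurrence argument (or, repackaged, the existence of periodic points in a nonempty \sft{}), so it should be short; everything after it is a direct invocation of Theorem~\ref{thrm:existsxineg}. By the remark following that theorem, the symmetric statement obtained by replacing $1$ with $-1$ throughout holds by the same argument.
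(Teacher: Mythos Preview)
Your proof is correct and follows essentially the same route as the paper's: assume a point exists, use pigeonhole to find a repeated length-$n$ block, and feed the resulting finite string into Theorem~\ref{thrm:existsxineg} to obtain a factor with $\xi$-value $-1$, contradicting the definition of the \sft. Your version simply spells out more detail (the separate treatment of $n=0$, the explicit pigeonhole step, and the check that the offending factor really lies inside the point), none of which changes the argument.
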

\begin{proof}
    Assume, there were a point $u$ in the \sft. Pick $\i < \j \in \N$ such that $\slice u \i {\i+n} = \slice u \j {\j+n}$. By definition of the \sft, $\xi \p{\slice u \i {\i+n}} = 1$. Set $w = \slice u \i {\j+n}$ and $k = \j - \i$. Then theorem~\ref{thrm:existsxineg} contradicts the definition of the \sft. 
\end{proof}

\section{The function \texorpdfstring {$\phi$} {phi}} 
\label{section:phi}

\begin{remark}
    Section~\ref{section:phi} has the same structure as the previous one. Each result but corollary~\ref{corll:phiodd} can be compared with the result of the same number in section~\ref{section:xi}. Often the proofs are similar. 
\end{remark}

\begin{defn}
    \empar[the function $\phi$]{$\phi$}$: \abc^* \to  \crl{-1, 0, 1}$ is defined recursively by
    \begin{align*}
        \phi\p w &= 
        \begin{cases}
            0  & \textif w = \upepsilon
        \\
            -1  & 
            \textif w \in \crl \zero ^ \odd
        \\ 
            1 & 
            \textif w \in \crl \one ^ \odd
        \\
            \sgn \p{\phi \p{r \p w} - \phi \p{l \p w}} & \textelse.
        \end{cases}
    \end{align*}
\end{defn}

\begin{remark}
    In figures~\ref{fig:deBruijn1xi} to \ref{fig:deBruijn4xi} the vertices and edges are coloured according to the value $\phi$ assigns to them. 
\end{remark}

\begin{table}
    \centering
    \begin{tabular}{c|cccccccc}
        $w$ && 
        $\upepsilon$ & 
        $\zero$ & $\one$ & 
        $\bm{00}$ & $\bm{01}$ & $\bm{10}$ & $\bm{11}$
    \\ \hline
        $\phi \p w$ && 
        0 & 
        -1 & 1 & 
        0 & 1 & -1 & 0 
    \\ \hline \hline
        $w$ &
        $\bm{000}$ & $\bm{001}$ & $\bm{010}$ & $\bm{011}$ & $\bm{100}$ & $\bm{101}$ & $\bm{110}$ & $\bm{111}$         
    \\ \hline
        $\phi \p w$ &
        -1 & 1 & -1 & -1 & 1 & 1 & -1 & 1
    \end{tabular}
    \caption{$\phi \p w$ for $w \in \abc^{\le3}$}
    \label{tab:phi}
    \hrulefill
\end{table}

\begin{lemma}
    \begin{align}
        \phi \circ C &= -\phi &
        \p{\phi \circ R} \p w&= 
        \p{-1}^{\# w + 1} \cdot \phi \p w 
        \label{eq:phiCphiR}
    \end{align}
\end{lemma}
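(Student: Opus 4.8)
The plan is to prove both identities simultaneously by induction on $\# w$, closely following the proof of lemma~\ref{lemma:xiCxiR}. The base cases $w = \upepsilon$, $w \in \crl \zero ^ \odd$ and $w \in \crl \one ^ \odd$ are immediate: $C$ interchanges $\crl \zero ^ \odd$ and $\crl \one ^ \odd$ and fixes $\upepsilon$, while $R$ fixes each of these three sets (the all-$\zero$ and all-$\one$ strings being palindromes), so one only has to read off the signs, using that $\p{-1}^{\#w+1} = 1$ whenever $\#w$ is odd.

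For the inductive step, first observe that $w$ falls into the ``else'' branch of the definition of $\phi$ exactly when $C \p w$ does, and exactly when $R \p w$ does, by the set-theoretic remarks just made. Hence in the remaining cases $\phi \p w$, $\phi \p{C \p w}$ and $\phi \p{R \p w}$ are all given by the $\sgn$-formula, and one may commute $C$ past $l$ and $r$ using \eqref{eq:CllCCrrC} and commute $R$ past them using \eqref{eq:RlrRRrlR}.

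For $\phi \circ C$ the argument is entirely parallel to lemma~\ref{lemma:xiCxiR}: $\phi \p{C \p w} = \sgn\p{\phi \p{C \p{r \p w}} - \phi \p{C \p{l \p w}}}$, and applying the induction hypothesis $\phi \circ C = -\phi$ to the shorter strings $l \p w$ and $r \p w$ rewrites the right-hand side as $\sgn\p{\phi \p{l \p w} - \phi \p{r \p w}} = -\phi \p w$. For $\phi \circ R$ the key point is that \eqref{eq:RlrRRrlR} gives $l \p{R \p w} = R \p{r \p w}$ and $r \p{R \p w} = R \p{l \p w}$, i.e.\ reversal swaps the roles of the two arguments of the subtraction. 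Feeding in the induction hypothesis $\phi \p{R \p u} = \p{-1}^{\# u + 1} \phi \p u$ for $u = l \p w$ and $u = r \p w$ — both of length $\# w - 1$, so the exponent is $\# w$ — one obtains $\phi \p{R \p w} = \sgn\p{\p{-1}^{\# w}\p{\phi \p{l \p w} - \phi \p{r \p w}}} = -\p{-1}^{\# w}\phi \p w = \p{-1}^{\# w + 1}\phi \p w$.

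The only delicate point is the bookkeeping of the exponent $\p{-1}^{\#w+1}$ in the $R$-identity, together with the extra sign produced by the $l \leftrightarrow r$ swap under reversal; I do not expect any genuine obstacle beyond keeping those two effects straight.
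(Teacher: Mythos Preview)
Your proposal is correct and essentially identical to the paper's own proof: both argue by induction on $\#w$, dispose of the special strings $\upepsilon$, $\zero^{\odd}$, $\one^{\odd}$ directly from the definition, and in the ``else'' branch commute $C$ and $R$ past $l,r$ via \eqref{eq:CllCCrrC} and \eqref{eq:RlrRRrlR} before applying the induction hypothesis. The only cosmetic difference is that you group the special strings with the base case while the paper treats them inside the inductive step; the sign bookkeeping for $R$ is the same in both.
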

\begin{proof}
    By induction on $\# w$. 
    \begin{align*}
        \phi \p{C \p \upepsilon} &= -\phi \p \upepsilon = \phi \p{R \p \upepsilon} =
        \p{-1}^{0 + 1} \cdot \phi \p \upepsilon = 0
    \end{align*}
    Now assume, the statement holds for $\# w < n$. Let $\# w = n$. 
    
    If $w \in \crl \zero ^\odd \cup \crl \one ^\odd$, the statement follows directly from the definition of $\phi$. Otherwise, 
    \begin{align*}
        \begin{split}
            \p{\phi \circ C} \p w &= 
            \sgn \p{
                \p{\phi \circ r \circ C} \p w - 
                \p{\phi \circ l \circ C} \p w
            } 
        \\ 
            &= \sgn \p{
                \p{\phi \circ C \circ r} \p w - 
                \p{\phi \circ C \circ l} \p w
            } 
        \\ 
            &= -\sgn \p{
                \p{\phi \circ r} \p w - 
                \p{\phi \circ l} \p w
            } 
        \\ 
            &= -\phi \p w            
        \end{split}
    \\
        \begin{split}
            \p{\phi \circ R} \p w &= 
            \sgn \p{
                \p{\phi \circ r \circ R} \p w - 
                \p{\phi \circ l \circ R} \p w
            } 
        \\ 
            &= \sgn \p{
                \p{\phi \circ R \circ l} \p w - 
                \p{\phi \circ R \circ r} \p w
            } 
        \\ 
            &= \p{-1}^n \cdot \sgn \p{
                \p{\phi \circ l} \p w - 
                \p{\phi \circ r} \p w
            } 
        \\ 
            &= \p{-1}^{n+1} \cdot \phi \p w        
        \end{split}
    \end{align*}
\end{proof}

\begin{corll} \label{corll:phipalindrome}
    Let $w \in \abc^\even$ be a palindrome. Then 
    \begin{align*}
        \phi \p w = 0.       
    \end{align*}
\end{corll}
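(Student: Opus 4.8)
The plan is to mimic the proof of corollary~\ref{corll:xipalindrome} exactly, using the second half of equation~\eqref{eq:phiCphiR} in place of~\eqref{eq:xiCxiR}. Since $w$ is a palindrome we have $R\p w = w$, so specialising $\p{\phi \circ R}\p w = \p{-1}^{\#w+1}\cdot\phi\p w$ to this case gives $\phi\p w = \p{-1}^{\#w+1}\cdot\phi\p w$.

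Next I would use the hypothesis $w \in \abc^\even$: then $\#w + 1$ is odd, so $\p{-1}^{\#w+1} = -1$ and the identity reads $\phi\p w = -\phi\p w$, which forces $\phi\p w = 0$.

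There is no real obstacle here — the only thing to be careful about is the parity bookkeeping, namely that the extra sign $\p{-1}^{\#w+1}$ coming from the reversal formula for $\phi$ (as opposed to the sign-free formula~\eqref{eq:xiCxiR} for $\xi$) is precisely what makes the evenness hypothesis necessary. This also explains why the corresponding statement is restricted to $\abc^\even$, whereas corollary~\ref{corll:xipalindrome} held for all palindromes.

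\begin{proof}
    Since $w$ is a palindrome, $R \p w = w$, so the right-hand equation of \eqref{eq:phiCphiR} becomes
    \begin{align*}
        \phi \p w &= \p{-1}^{\# w + 1} \cdot \phi \p w.
    \end{align*}
    As $\# w$ is even, $\# w + 1$ is odd, hence $\p{-1}^{\# w + 1} = -1$ and the above reads
    \begin{align*}
        \phi \p w &= -\phi \p w,
    \end{align*}
    which establishes the statement.
\end{proof}
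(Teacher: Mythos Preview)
Your proof is correct and follows essentially the same approach as the paper's own proof: both use the palindrome condition $R\p w = w$ in the reversal identity of \eqref{eq:phiCphiR} together with the evenness of $\#w$ to obtain $\phi\p w = -\phi\p w$. The only difference is that you spell out the parity computation explicitly, which the paper leaves implicit.
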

\begin{proof}
    For a palindrome $w$ of even length, equation \eqref{eq:phiCphiR} becomes 
    \begin{align*}
        \phi \p w &= 
        -\phi \p w,
    \end{align*}
    which establishes the statement. 
\end{proof}

\begin{corll} \label{corll:phi01even}
    Let $k$ be even. Then 
    \begin{align}
        \phi \p{\zero^k} = 
        \phi \p{\one^k} = 
        0
        \label{eq:phi01even}
    \end{align}
\end{corll}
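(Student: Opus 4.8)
The plan is to observe that $\zero^k$ and $\one^k$ are palindromes, so that the statement reduces immediately to corollary~\ref{corll:phipalindrome}. First I would note that reversing a string consisting only of $\zero$'s (or only of $\one$'s) leaves it unchanged, i.e.\ $R\p{\zero^k} = \zero^k$ and $R\p{\one^k} = \one^k$; hence both are palindromes. Since $k$ is even, both also lie in $\abc^\even$. Applying corollary~\ref{corll:phipalindrome} then gives $\phi\p{\zero^k} = \phi\p{\one^k} = 0$, which is exactly the claim. The boundary case $k = 0$ is covered as well, since $\zero^0 = \one^0 = \upepsilon$ has even length $0$ and $\phi\p\upepsilon = 0$ by definition. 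This mirrors the proof of corollary~\ref{corll:xiTodd}, where $\xi\p{T^k} = \xi\p{CT^k} = 0$ for odd $k$ was deduced from corollary~\ref{corll:xipalindrome} in the same way.

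If one prefers an argument that does not go through the palindrome corollary, one can unfold the recursive definition of $\phi$ directly. For even $k \ge 2$ the string $\zero^k$ is not in $\crl\zero^\odd$, so $\phi\p{\zero^k} = \sgn\p{\phi\p{r\p{\zero^k}} - \phi\p{l\p{\zero^k}}}$; but $l\p{\zero^k} = r\p{\zero^k} = \zero^{k-1}$, which has odd length and therefore $\phi$-value $-1$, whence $\phi\p{\zero^k} = \sgn\p{-1 - \p{-1}} = 0$. The same computation with $\one$ in place of $\zero$ — or, alternatively, the relation $\phi \circ C = -\phi$ from \eqref{eq:phiCphiR} together with $C\p{\zero^k} = \one^k$ — yields $\phi\p{\one^k} = 0$.

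There is essentially no obstacle here: the corollary is a one-line consequence of results already established, and the only point requiring a moment's care is the boundary case $k = 0$, which both arguments above handle without extra work.
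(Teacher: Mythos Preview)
Your proposal is correct and follows exactly the paper's own proof: both observe that $\zero^k$ and $\one^k$ are palindromes of even length and invoke corollary~\ref{corll:phipalindrome}. The additional direct unfolding of the recursion and the remark on $k=0$ are fine but not needed.
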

\begin{proof}
    Since $\zero^k, \one^k$ are palindromes, the statement follows directly from corollary ~\ref{corll:phipalindrome}.
\end{proof}

\begin{observ} \label{observ:phi0}
    Let $w \in \abc^{\ge 1}$.
    \begin{align}
        \phi \p w = 0 &\iff 
        w \notin \crl \zero ^\odd \cup \crl \one ^\odd \land 
        \phi \p{l \p w} = \phi \p{r \p w}
        \label{eq:phi0}
    \end{align}
\end{observ}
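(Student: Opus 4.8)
The plan is to unwind the recursive definition of $\phi$ directly, with no induction and no auxiliary lemma required: since $w \in \abc^{\ge 1}$ we have $w \neq \upepsilon$, so exactly one of two things happens — either $w$ lies in $\crl \zero ^ \odd \cup \crl \one ^ \odd$, in which case $\phi \p w \in \crl{-1, 1}$, or $w$ falls under the last clause and $\phi \p w = \sgn \p{\phi \p{r \p w} - \phi \p{l \p w}}$. I would prove the two implications separately, both directly.

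For the forward direction I would assume $\phi \p w = 0$. Because $\phi$ takes the value $-1$ on every string of $\crl \zero ^ \odd$ and $1$ on every string of $\crl \one ^ \odd$, the hypothesis $\phi \p w = 0$ already forces $w \notin \crl \zero ^ \odd \cup \crl \one ^ \odd$; together with $w \neq \upepsilon$ this puts us in the last case of the definition, so $0 = \phi \p w = \sgn \p{\phi \p{r \p w} - \phi \p{l \p w}}$. Since $\sgn$ vanishes only at $0$, this yields $\phi \p{l \p w} = \phi \p{r \p w}$, which is the right-hand side of the equivalence.

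For the converse I would assume $w \notin \crl \zero ^ \odd \cup \crl \one ^ \odd$ and $\phi \p{l \p w} = \phi \p{r \p w}$. Since also $w \neq \upepsilon$, the definition again reduces to $\phi \p w = \sgn \p{\phi \p{r \p w} - \phi \p{l \p w}} = \sgn \p 0 = 0$. Here I would remark only in passing that $l \p w$ and $r \p w$ are legitimate arguments for $\phi$ because $l, r$ are defined on all of $\abc^{\ge 1}$, and that when $\# w = 1$ this direction is vacuous, every one-letter string lying in $\crl \zero ^ \odd \cup \crl \one ^ \odd$.

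I do not expect any real obstacle: the statement is essentially a repackaging of the case distinction in the definition of $\phi$, and the argument is the exact analogue of the one behind Observation~\ref{observ:xi0} for $\xi$ — the only structural difference being that $\phi$'s recursion uses the \emph{difference} $\phi \p{r \p w} - \phi \p{l \p w}$, so here the condition $\phi \p{l \p w} = \phi \p{r \p w}$ plays the role that $\xi \p{l \p w} = -\xi \p{r \p w}$ played there.
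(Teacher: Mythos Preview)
Your argument is correct and is exactly what the paper has in mind: the statement is labelled an \emph{Observation} and given no proof at all, precisely because it is an immediate unwinding of the case distinction in the definition of $\phi$, just as you describe. Your write-up makes explicit the one-line reasoning the paper leaves implicit.
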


\begin{corll}
    Let $w \in \abc^{\ge 1}$ such that $\phi \p{l \p w}, \phi \p{r \p w} \ne 0$. Then
    \begin{align}
        \phi \p w = 0 \iff 
        \phi \p{l \p w} = \phi \p{r \p w}
        \label{eq:phi0*}
    \end{align}
\end{corll}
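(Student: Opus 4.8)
The plan is to mirror the proof of the analogous statement \eqref{eq:xi0*} for $\xi$: reduce the claim to observation~\ref{observ:phi0} by using the hypothesis to rule out the exceptional strings in $\crl \zero ^\odd \cup \crl \one ^\odd$, this time with corollary~\ref{corll:phi01even} playing the role that corollary~\ref{corll:xiTodd} played there.

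First I would recall that observation~\ref{observ:phi0} already gives
\[
\phi \p w = 0 \iff w \notin \crl \zero ^\odd \cup \crl \one ^\odd \land \phi \p{l \p w} = \phi \p{r \p w},
\]
so it suffices to show that under the hypothesis $\phi \p{l \p w}, \phi \p{r \p w} \ne 0$ the string $w$ cannot lie in $\crl \zero ^\odd \cup \crl \one ^\odd$. Suppose $w = \zero^k$ with $k$ odd. Then $l \p w = \zero^{k-1}$ with $k-1$ even, so $\phi \p{l \p w} = 0$ by \eqref{eq:phi01even} (this also covers $k = 1$, where $l \p w = \upepsilon$), contradicting $\phi \p{l \p w} \ne 0$. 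The case $w = \one^k$ with $k$ odd is identical with $\zero$ replaced by $\one$. Hence $w \notin \crl \zero ^\odd \cup \crl \one ^\odd$, and the displayed equivalence collapses to the one claimed.

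There is essentially no obstacle here. The only point worth noting is \emph{why} the hypothesis is needed: on a monochromatic odd-length string $\phi$ is forced to be $\pm 1$ while its length-$(\#w-1)$ prefix has even length and hence $\phi = 0$, which is precisely the configuration the hypothesis forbids; once those strings are discarded, observation~\ref{observ:phi0} finishes the argument immediately.
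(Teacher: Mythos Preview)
Your proof is correct and follows essentially the same approach as the paper: use corollary~\ref{corll:phi01even} to rule out $w \in \crl \zero ^\odd \cup \crl \one ^\odd$ under the hypothesis, then conclude via observation~\ref{observ:phi0}. The paper is terser (and in fact contains a typo, citing observation~\ref{observ:xi0} instead of~\ref{observ:phi0}), but the argument is identical.
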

\begin{proof}
    From corollary ~\ref{corll:phi01even} it follows that $w \notin \crl \zero ^\odd \cup \crl \one ^\odd$, so the statement follows from observation \ref{observ:xi0}.
\end{proof}

\begin{lemma} \label{lemma:phiphir}
    Let $w \in \abc^{\ge 1}$ such that $\phi \p{w} \ne 0$. Then 
    \begin{align}
        \phi \p{r \p{w}} \ne 0 &\implies
        \phi \p{w} = \phi \p{r \p w}
        \label{eq:phiphir}
    \\ 
        \phi \p{l \p{w}} \ne 0 &\implies
        \phi \p{w} \ne \phi \p{l \p w}
        \label{eq:phiphil}.
    \end{align}
\end{lemma}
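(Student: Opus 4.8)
The statement to prove is Lemma~\ref{lemma:phiphir}, which is the $\phi$-analogue of Lemma~\ref{lemma:xixir}--\ref{lemma:xixil}, so I would imitate that proof almost verbatim, taking care only of the sign twists built into the definition of $\phi$. The plan is to fix $w \in \abc^{\ge 1}$ with $\phi \p w \ne 0$ and first observe that neither $l \p w$ nor $r \p w$ can be an element of $\crl \zero ^\odd \cup \crl \one ^\odd$ when we are in the recursive case: indeed, if either $\phi \p{l \p w} \ne 0$ or $\phi \p{r \p w} \ne 0$, I claim $w \notin \crl \zero ^\odd \cup \crl \one ^\odd$. This is because if $w \in \crl \zero ^\odd$ then $l \p w, r \p w \in \crl \zero ^\even$, and by corollary~\ref{corll:phi01even} we would get $\phi \p{l \p w} = \phi \p{r \p w} = 0$, contradiction; similarly for $\crl \one ^\odd$. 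Hence $w \notin \crl \zero ^\odd \cup \crl \one ^\odd$ and therefore
\begin{align*}
    \phi \p w = \sgn \p{\phi \p{r \p w} - \phi \p{l \p w}}.
\end{align*}

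From here the two implications fall out of the three-valuedness of $\phi$. For \eqref{eq:phiphir}, suppose $\phi \p{r \p w} \ne 0$, so $\phi \p{r \p w} \in \crl{-1, 1}$. Since $\phi \p{l \p w} \in \crl{-1, 0, 1}$ and $\phi \p w = \sgn \p{\phi \p{r \p w} - \phi \p{l \p w}} \ne 0$ by hypothesis, the difference $\phi \p{r \p w} - \phi \p{l \p w}$ is nonzero and must have the same sign as $\phi \p{r \p w}$ itself: if $\phi \p{r \p w} = 1$ then $\phi \p{r \p w} - \phi \p{l \p w} \in \crl{1, 2}$, whereas if it equals $0$ we would need $\phi \p{l \p w} = 1$ making the difference $0$, contradicting $\phi \p w \ne 0$; the case $\phi \p{r \p w} = -1$ is symmetric. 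Thus $\phi \p w = \phi \p{r \p w}$. For \eqref{eq:phiphil}, suppose $\phi \p{l \p w} \ne 0$. By the same reasoning applied to the subtrahend, $\phi \p w = \sgn \p{\phi \p{r \p w} - \phi \p{l \p w}}$ must equal $-\phi \p{l \p w}$, which in particular is different from $\phi \p{l \p w}$ (since $\phi \p{l \p w} \ne 0$). This gives $\phi \p w \ne \phi \p{l \p w}$.

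I do not expect a serious obstacle here; the proof is short and structurally identical to Lemma~\ref{lemma:xixir}--\ref{lemma:xixil}. The only point requiring a little care — and the one place the $\phi$-proof differs from the $\xi$-proof — is the sign bookkeeping in the recursive formula: because $\phi$ uses $\phi \p{r \p w} - \phi \p{l \p w}$ rather than $\xi \p{l \p w} + \xi \p{r \p w}$, the roles of $l$ and $r$ are \emph{not} symmetric, and one gets $\phi \p w = \phi \p{r \p w}$ in one case but $\phi \p w \ne \phi \p{l \p w}$ (an inequality, not an equality) in the other. Keeping that asymmetry straight, and invoking corollary~\ref{corll:phi01even} exactly where the $\xi$-proof invoked corollary~\ref{corll:xiTodd}, is all that is needed.
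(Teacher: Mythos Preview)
Your proposal is correct and follows essentially the same route as the paper: both first use corollary~\ref{corll:phi01even} to rule out $w \in \crl \zero ^\odd \cup \crl \one ^\odd$, then read the two implications off the recursive formula $\phi \p w = \sgn \p{\phi \p{r \p w} - \phi \p{l \p w}}$. The paper compresses your case analysis by rewriting the argument of $\sgn$ as the sum $\phi \p{r \p w} + \p{-\phi \p{l \p w}}$ and noting that if one summand is nonzero then $\phi \p w$ is either that summand or zero, but this is the same reasoning you spell out.
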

\begin{proof}
    From $\phi \p{r \p w} \ne 0 \lor \cp \phi l w \ne 0$ it follows that $w \notin \crl \zero ^ \odd \cup \crl \one ^ \odd$, so \begin{align*}
        \phi \p w = 
        \sgn \p{\cp \phi r w + \p{-\cp \phi l w}}.
    \end{align*}
    Now if one of the summands is non-zero $\phi \p w$ must either equal that summand or be zero. 
\end{proof}

\begin{lemma} \label{lemma:phi0i1j}
    Let $\i, \j \ge 1$. Then 
    \begin{align}
        \phi \p{\zero^\i \one^\j} = \alter{\j+1}.
        \label{eq:phi0i1j}
    \end{align}
\end{lemma}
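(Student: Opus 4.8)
The plan is to induct on $\i + \j$, closely following the proof of lemma~\ref{lemma:xi0i1j}. The base case $\i + \j = 2$ forces $\i = \j = 1$, i.e.\ $w = \bm{01}$, for which table~\ref{tab:phi} records $\phi \p{\bm{01}} = 1 = \alter 2$.

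For the inductive step I would take $w = \zero^\i \one^\j$ with $\i + \j = n \ge 3$, so that at least one of $\i, \j$ is $\ge 2$. Since $w$ contains both letters it lies outside $\crl \zero ^\odd \cup \crl \one ^\odd$, so the recursive clause applies and $\phi \p w = \sgn \p{\phi \p{r \p w} - \phi \p{l \p w}}$, where $r \p w = \zero^{\i-1} \one^\j$ and $l \p w = \zero^\i \one^{\j-1}$. The heart of the argument is to pin both inner values down up to a common sign. If $\i \ge 2$, the induction hypothesis gives $\phi \p{r \p w} = \alter{\j+1}$; if $\i = 1$, then $r \p w = \one^\j$ with $\j \ge 2$, so $\phi \p{r \p w}$ equals $1 = \alter{\j+1}$ when $\j$ is odd (definition of $\phi$) and $0$ when $\j$ is even (corollary~\ref{corll:phi01even}). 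In every case $\phi \p{r \p w} \in \crl{0, \alter{\j+1}}$, the value $0$ occurring only when $\i = 1$ and $\j$ is even. Symmetrically, if $\j \ge 2$ the induction hypothesis gives $\phi \p{l \p w} = \alter{\j} = -\alter{\j+1}$; if $\j = 1$, then $l \p w = \zero^\i$ with $\i \ge 2$, so $\phi \p{l \p w}$ equals $-1 = -\alter{\j+1}$ when $\i$ is odd and $0$ when $\i$ is even. Thus $-\phi \p{l \p w} \in \crl{0, \alter{\j+1}}$, the value $0$ occurring only when $\j = 1$ and $\i$ is even.

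Finally, the two exceptional situations require $\i = 1$ and $\j = 1$ respectively, so they cannot hold simultaneously (that would be the already-settled base case). Hence $\phi \p{r \p w} - \phi \p{l \p w}$ is either $\alter{\j+1}$ or $2 \cdot \alter{\j+1}$, and in either case its sign is $\alter{\j+1}$, which is the assertion.

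I do not expect a genuine obstacle here: the only delicate point is the bookkeeping of the four boundary configurations ($\i = 1$ or $\j = 1$, each in two parities), which is precisely where corollary~\ref{corll:phi01even} and the homogeneous-string clauses of the definition of $\phi$ are invoked; the parity exponents $\j+1$ versus $(\j-1)+1$ must be tracked carefully so that the two contributions line up with the same sign $\alter{\j+1}$.
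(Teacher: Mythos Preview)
Your argument is correct and follows essentially the same route as the paper: induction on $\i+\j$, the recursive clause for $\phi$, and the homogeneous-string values from the definition and corollary~\ref{corll:phi01even}. The only cosmetic difference is that the paper splits into the cases ``$\j$ even'' versus ``$\j$ odd'' (obtaining $\phi\p{\zero^{\i-1}\one^\j}\le 0,\ \phi\p{\zero^\i\one^{\j-1}}=1$ in the first and the reverse in the second), whereas you split according to the boundary cases $\i=1$ or $\j=1$; both organisations yield the same inequalities and the same conclusion.
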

\begin{proof}
    Induction on $\i + \j$. $\phi \p{\zero \one} = 1$ which establishes the statement for $\i + \j = 2$. Now assume the statement holds for $\i + \j < n \in \N$. Let $\i + \j = n$. Then 
    \begin{align*}
        \phi \p{\zero^\i \one^\j} = 
        \sgn \p{
            \phi \p{\zero^{\i - 1} \one^\j} -
            \phi \p{\zero^\i \one^{\j - 1}}
        },
    \end{align*}
    where
    \begin{align*}
        \j \iseven &\implies
        \phi \p{\zero^{\i - 1} \one^\j} \le 0 \land 
        \phi \p{\zero^\i \one^{\j - 1}} = 1
    \\
        \j \isodd &\implies
        \phi \p{\zero^{\i - 1} \one^\j} = 1 \land 
        \phi \p{\zero^\i \one^{\j - 1}} \le 0.
    \end{align*}
    In both cases the statement holds. 
\end{proof}

\begin{corll} \label{corll:phi1j0i}
    \begin{align*}
        \phi \p{\one^\j \zero^\i} = \alter \i
    \end{align*}
\end{corll}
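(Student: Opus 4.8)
The plan is to obtain this exactly the way Corollary~\ref{corll:xi1j0i} was obtained from Lemma~\ref{lemma:xi0i1j}: combine one of the symmetry identities in \eqref{eq:phiCphiR} with Lemma~\ref{lemma:phi0i1j}. The cleanest route uses the complement identity $\phi \circ C = -\phi$. Since $C\p{\one^\j \zero^\i} = \zero^\j \one^\i$, we get
\begin{align*}
    \phi\p{\one^\j \zero^\i} = -\phi\p{\zero^\j \one^\i} \by{\eqref{eq:phi0i1j}} -\alter{\i+1} = \alter{\i}.
\end{align*}

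Alternatively one can mirror the $\xi$-argument verbatim using the reversal identity: $R\p{\zero^\i \one^\j} = \one^\j \zero^\i$ and $\#\p{\zero^\i \one^\j} = \i + \j$, so \eqref{eq:phiCphiR} and Lemma~\ref{lemma:phi0i1j} give $\phi\p{\one^\j \zero^\i} = \alter{\i+\j+1}\cdot \alter{\j+1} = \alter{\i+2\j+2} = \alter{\i}$. Either version is a single line; I would present whichever phrasing best parallels the proof already written for $\xi$, so that sections~\ref{section:xi} and~\ref{section:phi} keep their deliberately matching structure.

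There is essentially no obstacle here — the statement is a genuine corollary. The only things to be careful about are the parity bookkeeping in the exponent of $-1$ (that $\i + 2\j + 2 \equiv \i \pmod 2$, equivalently $-\alter{\i+1} = \alter{\i}$) and the implicit hypothesis $\i,\j \ge 1$ inherited from Lemma~\ref{lemma:phi0i1j}, which is what guarantees that $\zero^\j \one^\i$ (or $\zero^\i \one^\j$) falls under that lemma rather than under a base case in the definition of $\phi$.
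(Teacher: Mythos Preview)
Your proposal is correct, and your ``alternative'' route via the reversal identity is verbatim the paper's proof: $\phi\p{\one^\j\zero^\i}\by{\eqref{eq:phiCphiR}}\alter{\i+\j+1}\cdot\phi\p{\zero^\i\one^\j}\by{\eqref{eq:phi0i1j}}\alter{\i+\j+1+\j+1}=\alter\i$. Your primary suggestion using the complement identity is an equally valid one-liner; the paper presumably chose reversal to keep the parallel with Corollary~\ref{corll:xi1j0i} literal.
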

\begin{proof}
    \begin{align*}
        \phi \p{\one^\j \zero^\i} 
        \by{\eqref{eq:phiCphiR}}
        \alter{\i + \j + 1} \cdot 
        \phi \p{\zero^\i \one^\j} 
        \by{\eqref{eq:phi0i1j}}
        \alter{\i + \j + 1 + \j + 1} = 
        \alter \i
    \end{align*}
\end{proof}

\begin{lemma}
    Let $k > 0$. Then $\phi \p{T^k} = \p{-1}^k$. 
\end{lemma}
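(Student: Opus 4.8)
The plan is a short induction on $k$, parallel to lemma~\ref{lemma:phi0i1j} and lemma~\ref{lemma:xiTk0}. For the base case $k = 1$ nothing is needed beyond the definition of $\phi$: $T^1 = \zero \in \crl\zero^\odd$, so $\phi\p{T^1} = -1 = \p{-1}^1$.

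For the inductive step I would fix $k \ge 2$ and assume $\phi\p{T^{k-1}} = \p{-1}^{k-1}$. Since $T^k$ contains both symbols it lies in neither $\crl\zero^\odd$ nor $\crl\one^\odd$, so the recursive clause of the definition of $\phi$ applies, giving $\phi\p{T^k} = \sgn\p{\phi\p{r\p{T^k}} - \phi\p{l\p{T^k}}}$. Next I would identify the two substrings. The recursive definition of $T$ shows $T^k = T^{k-1}a$ for a single symbol $a$, so $l\p{T^k} = T^{k-1}$; and (for $k \ge 2$) deleting the leading $\zero$ of $T^k$ leaves the alternating string of length $k-1$ starting with $\one$, that is, $r\p{T^k} = CT^{k-1} = C\p{T^{k-1}}$.

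Finally, using $\phi\p{CT^{k-1}} = -\phi\p{T^{k-1}}$ from \eqref{eq:phiCphiR}, one gets
\begin{align*}
    \phi\p{T^k}
    = \sgn\p{-\phi\p{T^{k-1}} - \phi\p{T^{k-1}}}
    = -\sgn\p{\phi\p{T^{k-1}}}
    = -\p{-1}^{k-1}
    = \p{-1}^k,
\end{align*}
where the last but one step uses the induction hypothesis together with $\p{-1}^{k-1} \ne 0$. Everything here is mechanical; the only step calling for an explicit (though trivial) justification is the identity $r\p{T^k} = CT^{k-1}$, which merely unwinds the description of $T^k$ and $CT^{k-1}$ as the alternating strings of length $k$ and $k-1$ beginning with $\zero$ and $\one$ respectively. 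I do not expect any real obstacle.
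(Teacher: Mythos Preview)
Your proof is correct and follows essentially the same route as the paper: induction on $k$ with base case $\phi\p{T^1}=\phi\p\zero=-1$, and in the inductive step the identification $l\p{T^k}=T^{k-1}$, $r\p{T^k}=CT^{k-1}$ together with \eqref{eq:phiCphiR} to obtain $\phi\p{T^k}=\sgn\p{-2\,\phi\p{T^{k-1}}}=-\phi\p{T^{k-1}}$. The paper's write-up is more compressed (and indexes the step from $k$ to $k+1$), but the argument is the same.
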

\begin{proof}
    Induction on $k$. $\phi \p{T^1} = \phi \p \zero = -1$. Now assume the statement holds for a certain $k$. Then 
    \begin{align*}
        \phi \p{T^{k+1}} = 
        \sgn \p{\phi \p{CT^k} - \phi \p{T^k}} 
        \by{\eqref{eq:phiCphiR}}
        \sgn \p{-2 \cdot \phi \p{T^k}}  = 
        - \phi \p{T^k} = 
        \alter{k+1}.
    \end{align*}
\end{proof}

\begin{lemma} \label{lemma:philwrw0}
    $$
        \forall w \in \abc^{\ge 1} \quad 
        \phi \p{l \p w} = \phi \p{r \p w} = 0 \implies
        w \in \crl \zero ^\odd \cup \crl \one ^\odd
    $$
\end{lemma}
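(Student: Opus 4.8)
The plan is to adapt the proof of lemma~\ref{lemma:xilr0} almost line for line, with $\phi$ in the role of $\xi$, inducting on $\# w$. The cases $\# w \le 2$ are immediate: for $\# w = 1$ one has $l \p w = r \p w = \upepsilon$ with $\phi \p \upepsilon = 0$ and $w \in \crl \zero ^\odd \cup \crl \one ^\odd$ anyway, and for $\# w = 2$ the values $\phi \p{l \p w}, \phi \p{r \p w}$ lie in $\crl{-1, 1}$, so the hypothesis is void. For the inductive step, let $\# w = n \ge 3$ with $\phi \p{l \p w} = \phi \p{r \p w} = 0$. Applying observation~\ref{observ:phi0} to $l \p w$ and to $r \p w$, and using \eqref{eq:lrrl} (so that $r \circ l = m$), I would extract a single common value
\begin{align*}
    c := \phi \p{l^2 \p w} = \phi \p{m \p w} = \phi \p{r^2 \p w}.
\end{align*}
If $c = 0$, then the equalities $\phi \p{l^2 \p w} = \phi \p{m \p w} = 0$ say exactly that $l \p w$ satisfies the hypothesis of the lemma, so the induction hypothesis yields $l \p w \in \crl \zero ^\odd \cup \crl \one ^\odd$ and hence $\phi \p{l \p w} \ne 0$, a contradiction; thus $c \ne 0$. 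By \eqref{eq:phiCphiR} (passing to $C \p w$) I may then assume $c = 1$.

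In the case $m \p w \in \crl \zero ^\odd \cup \crl \one ^\odd$, the equality $\phi \p{m \p w} = 1$ pins down $m \p w = \one^{n-2}$ with $n - 2$ odd, that is, $w_1 = \dots = w_{n-2} = \one$. Then $l \p w = w_0 \one^{n-2}$ and $r \p w = \one^{n-2} w_{n-1}$, and I would rule out $w_0 = \zero$ and $w_{n-1} = \zero$ using \eqref{eq:phi0i1j} and corollary~\ref{corll:phi1j0i} respectively (either would give $\phi \p{l \p w} = \alter{n-1} = 1$ or $\phi \p{r \p w} = \alter 1 = -1$, contradicting $\phi \p{l \p w} = \phi \p{r \p w} = 0$). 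This leaves $w = \one^n$ with $n$ odd, which is the desired conclusion.

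The case $m \p w \notin \crl \zero ^\odd \cup \crl \one ^\odd$ (which forces $n \ge 4$) I would show cannot occur. If $l^2 \p w \in \crl \zero ^\odd \cup \crl \one ^\odd$, then $\phi \p{l^2 \p w} = 1$ makes $l^2 \p w = \one^{n-2}$ of odd length, so $w_0 = \dots = w_{n-3} = \one$ and $m \p w$ is then either $\one^{n-2}$ (contradicting the case hypothesis) or $\one^{n-3}\zero$, for which corollary~\ref{corll:phi1j0i} gives $\phi \p{m \p w} = -1 \ne c$; the possibility $r^2 \p w \in \crl \zero ^\odd \cup \crl \one ^\odd$ is symmetric. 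Otherwise none of $l^2 \p w, m \p w, r^2 \p w$ is in $\crl \zero ^\odd \cup \crl \one ^\odd$, so the recursive clause of $\phi$ applies to all three, and unwinding $\phi \p{l^2 \p w} = \phi \p{m \p w} = \phi \p{r^2 \p w} = 1$ through $l^3, l^2 \circ r, l \circ r^2, r^3$ --- the device announced in the remark after lemma~\ref{lemma:xilr0} --- gives
\begin{align*}
    \p{\phi \circ l^3} \p w <
    \p{\phi \circ l^2 \circ r} \p w <
    \p{\phi \circ l \circ r^2} \p w <
    \p{\phi \circ r^3} \p w,
\end{align*}
a strictly increasing chain of four values of $\phi$, which is impossible because $\img \phi = \crl{-1, 0, 1}$.

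The real work sits in this last case, and --- exactly as in lemma~\ref{lemma:xilr0} --- the main obstacle is the bookkeeping: one must peel off, one by one, the configurations in which $l^2 \p w$, $m \p w$ or $r^2 \p w$ is a run of a single letter (so belongs to $\crl \zero ^\odd \cup \crl \one ^\odd$, where the recursion of $\phi$ is unavailable), all while keeping track of parities. The one ingredient absent from lemma~\ref{lemma:xilr0} is the preliminary exclusion of $c = 0$; that is the step where the induction hypothesis actually gets used.
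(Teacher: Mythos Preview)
Your argument is correct. Both the paper's proof and yours run by induction on $\# w$, extract the common value $c = \phi \p{l^2 \p w} = \phi \p{m \p w} = \phi \p{r^2 \p w}$, and lean on the four-term chain device from lemma~\ref{lemma:xilr0}; the difference is where the induction hypothesis enters. The paper invokes ``the argument used in the proof of lemma~\ref{lemma:inequalityproof}'' to force $c = 0$, then applies the induction hypothesis to $l \p w$ and $r \p w$ to see that they are constant strings, hence so is $w$, and finally reads off that $\# w$ is odd. You instead use the induction hypothesis at the outset to \emph{exclude} $c = 0$, and then eliminate $c \ne 0$ by explicit case analysis on whether $m \p w$, $l^2 \p w$, $r^2 \p w$ lie in $\crl \zero ^\odd \cup \crl \one ^\odd$. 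Your route is longer but more transparent: it spells out the special cases in which the recursive clause of $\phi$ is unavailable (which the paper absorbs into the phrase ``by the argument''), and it makes visible that the generic branch $c = 0$ is in fact vacuous --- for the only strings actually satisfying the hypothesis, namely $\zero^n$ and $\one^n$ with $n$ odd, one has $c = \pm 1$.
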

\begin{proof}
    Induction on $\# w$. For $\# w \le 2$ there are only finitely many cases. Now assume the statement holds for $\# w < n$. Let $\# w = n \ge 3$ and $\phi \p{l \p w} = \phi \p{r \p w} = 0$, which means that 
    \begin{align*}
        \phi \p{l^2 \p w} = 
        \phi \p{m   \p w} = 
        \phi \p{r^2 \p w}.
    \end{align*}
    By the argument used in the proof of lemma~\ref{lemma:inequalityproof}, one gets that in fact
    \begin{align*}
        \phi \p{l^2 \p w} = 
        \phi \p{m   \p w} = 
        \phi \p{r^2 \p w} = 
        0,
    \end{align*}
    so by the induction hypothesis, 
    \begin{align*}
        l \p w, r \p w \in 
        \crl \zero ^* \cup \crl \one ^*
    \end{align*}
    and since $\# w \ge 3$, 
    \begin{align*}
        w \in \crl \zero ^* \cup \crl \one ^*.
    \end{align*}
    $\# w$ must be odd because otherwise $\phi \p{l \p w} \ne 0$.
\end{proof}

\begin{lemma} \label{lemma:phimw0}
    Let $w \in \abc^{\ge 2}$ such that $\phi \p w = 0$. Then 
    \begin{align}
        \phi \p{m \p w} = 0
        \label{eq:phimw0}.
    \end{align} 
\end{lemma}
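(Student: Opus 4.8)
The plan is to follow the blueprint of the proof of lemma~\ref{lemma:xi0xim0}, but to replace its final squeeze argument by a direct appeal to lemma~\ref{lemma:phiphir}, which is available because for $\phi$ --- unlike for $\xi$ --- the two one-sided neighbours of a string play genuinely different roles. The pivotal observation is that $m = l \circ r = r \circ l$ by \eqref{eq:lrrl}, so $m \p w$ is simultaneously the suffix $r \p{l \p w}$ of $l \p w$ and the prefix $l \p{r \p w}$ of $r \p w$, and lemma~\ref{lemma:phiphir} asserts an \emph{equality} about suffixes but only an \emph{inequality} about prefixes.

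First I would unpack the hypothesis: from $\phi \p w = 0$ and observation~\ref{observ:phi0} one obtains $w \notin \crl \zero ^\odd \cup \crl \one ^\odd$ together with $\phi \p{l \p w} = \phi \p{r \p w}$. The next step is to show that this common value is non-zero: if $\phi \p{l \p w} = \phi \p{r \p w} = 0$, then lemma~\ref{lemma:philwrw0} would force $w \in \crl \zero ^\odd \cup \crl \one ^\odd$, contradicting $\phi \p w = 0$. Hence $\phi \p{l \p w} = \phi \p{r \p w} \ne 0$, which is exactly the hypothesis that lemma~\ref{lemma:phiphir} needs, applicable both to $l \p w$ and to $r \p w$.

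Then I would assume, for contradiction, that $\phi \p{m \p w} \ne 0$. Applying \eqref{eq:phiphir} to $l \p w$ --- legitimate since $\phi \p{l \p w} \ne 0$ and since $r \p{l \p w} = m \p w$ has non-zero $\phi$ by assumption --- gives $\phi \p{l \p w} = \phi \p{m \p w}$. Applying \eqref{eq:phiphil} to $r \p w$ --- legitimate since $\phi \p{r \p w} \ne 0$ and $l \p{r \p w} = m \p w$ --- gives $\phi \p{r \p w} \ne \phi \p{m \p w}$. Because $\phi \p{l \p w} = \phi \p{r \p w}$, these two conclusions are incompatible, so $\phi \p{m \p w} = 0$, which is the claim.

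I do not expect a real obstacle; the one thing that must be kept straight is the directional asymmetry of lemma~\ref{lemma:phiphir} --- the equality landing on the suffix side and the inequality on the prefix side --- which is inherited from the non-symmetric recursion $\phi \p w = \sgn \p{\phi \p{r \p w} - \phi \p{l \p w}}$ and is exactly what makes the two applications clash. Should one prefer to stay closer to the proof of lemma~\ref{lemma:xi0xim0}, one can instead split off the case $m \p w \in \crl \zero ^\even \cup \crl \one ^\even$ via corollary~\ref{corll:phi01even}, rule out $l \p w, r \p w \in \crl \zero ^\odd \cup \crl \one ^\odd$ using the explicit values in lemma~\ref{lemma:phi0i1j} and corollary~\ref{corll:phi1j0i}, unfold the recursion on $l \p w$ and $r \p w$ to reach $\sgn \p{\phi \p{m \p w} - \phi \p{l^2 \p w}} = \sgn \p{\phi \p{r^2 \p w} - \phi \p{m \p w}} \ne 0$, and then use that $\phi$ takes only the values $-1, 0, 1$ to conclude $\phi \p{m \p w} = 0$; this works but is longer.
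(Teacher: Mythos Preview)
Your argument is correct. The key steps --- deducing $\phi\p{l\p w}=\phi\p{r\p w}\ne 0$ from observation~\ref{observ:phi0} and lemma~\ref{lemma:philwrw0}, then playing the equality \eqref{eq:phiphir} applied to $l\p w$ against the inequality \eqref{eq:phiphil} applied to $r\p w$ --- are all valid, and the directional asymmetry you highlight is precisely what makes the contradiction land.

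The paper proceeds differently: it follows the template of lemma~\ref{lemma:xi0xim0} almost verbatim. After separating the case $m\p w\in\crlzero\even\cup\crlone\even$ (handled by corollary~\ref{corll:phi01even}) and establishing $\phi\p{l\p w}=\phi\p{r\p w}\ne 0$ just as you do, it unfolds the recursion at $l\p w$ and $r\p w$ to obtain
\[
\sgn\p{\phi\p{m\p w}-\phi\p{l^2\p w}}=\sgn\p{\phi\p{r^2\p w}-\phi\p{m\p w}}\ne 0,
\]
and then squeezes $\phi\p{m\p w}$ strictly between two values in $\crl{-1,0,1}$ to force it to be $0$. Your route is shorter because it packages exactly this squeeze into the already-proved lemma~\ref{lemma:phiphir}, and as a bonus it does not need the preliminary case split on $m\p w$ (nor the implicit check that $l\p w,r\p w\notin\crlzero\odd\cup\crlone\odd$, which the paper needs to unfold the recursion). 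What the paper's approach buys is structural parallelism with section~\ref{section:xi}: since there is no analogue of \eqref{eq:phiphil} for $\xi$ (both clauses of lemma~\ref{lemma:xixil} are equalities), the squeeze is unavoidable there, and the paper chose to keep the two proofs visually aligned. Your alternative sketch at the end is essentially the paper's proof.
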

\begin{proof}
    If $m \p w \in \crlzero \even \cup \crlone \even$ the statement follows from corollary~\ref{corll:phi01even} so assume that it is not. 
    $\phi \p w = 0$ implies that $\phi \p{l \p w} = \phi \p{r \p w}$. By lemma~\ref{lemma:philwrw0} either $w \in \crl \zero ^\odd \cup \crl \one ^\odd$ or $\phi \p{l \p w} = \phi \p{r \p w} \ne 0$. However, the first case can be excluded since $\phi \p w = 0$. Hence 
    \begin{align*}
        \sgn \p{
            \phi \p{m   \p w} - 
            \phi \p{l^2 \p w}
        } =
        \sgn \p{
            \phi \p{r^2 \p w} - 
            \phi \p{m   \p w}
        } \ne 
        0,
    \end{align*}
    so 
    \begin{align*}
        \phi \p{l^2 \p w} <
        \phi \p{m   \p w} < 
        \phi \p{r^2 \p w}
        \quor 
        \phi \p{l^2 \p w} >
        \phi \p{m   \p w} > 
        \phi \p{r^2 \p w}.
    \end{align*}
    In both cases it follows that $\phi \p{m \p w} = 0$.
\end{proof}

\begin{corll} \label{corll:phiodd}
    Let $w \in \abc^{\mathrm{odd}}$. Then 
    \begin{align}
        \phi \p w \ne 0
        \label{eq:phiodd}.
    \end{align} 
\end{corll}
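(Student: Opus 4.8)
The plan is to induct on $\# w$ through the odd values, using that the infix operator $m$ decreases length by exactly $2$ and hence preserves parity. Equivalently, one can phrase this as an infinite-descent argument: if some odd-length string $w$ satisfied $\phi \p w = 0$, then $m \p w$ would be a strictly shorter odd-length string with the same property, and iterating would eventually force $\phi \p v = 0$ for some $v$ of length $1$, which is impossible.

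Concretely, I would proceed by induction on $\# w$. For the base case $\# w = 1$ we have $w \in \crl \zero ^\odd \cup \crl \one ^\odd$, so $\phi \p w \in \crl{-1, 1}$ directly from the definition and in particular $\phi \p w \ne 0$. For the inductive step, let $\# w \ge 3$ be odd, assume the claim for all odd lengths strictly below $\# w$, and suppose toward a contradiction that $\phi \p w = 0$. Since $\# w \ge 2$, Lemma~\ref{lemma:phimw0} yields $\phi \p{m \p w} = 0$. But $m \p w \in \abc^{\# w - 2}$, and $\# w - 2$ is odd with $1 \le \# w - 2 < \# w$, contradicting the induction hypothesis. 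Hence $\phi \p w \ne 0$, which closes the induction.

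There is no real obstacle here: the content has already been packaged into Lemma~\ref{lemma:phimw0} (which itself rests on Lemma~\ref{lemma:philwrw0} and the fact that $\phi$ attains only the three values $-1, 0, 1$). The only points needing a moment's care are that $m \p w$ really has length at least $1$ once $\# w \ge 3$, so that the descent terminates at the base case, and that the hypothesis $w \in \abc^{\ge 2}$ of Lemma~\ref{lemma:phimw0} is met whenever that lemma is invoked; both hold automatically in the inductive step. It is worth remarking that the analogous statement is false for $\xi$ — for instance $\xi \p{T^k} = 0$ for every odd $k$ by Corollary~\ref{corll:xiTodd} — which is exactly the asymmetry noted in the remark accompanying the omitted result number and the reason corollary~\ref{corll:phiodd} has no counterpart in section~\ref{section:xi}.
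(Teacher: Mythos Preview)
Your proof is correct and follows essentially the same approach as the paper: induction on $\#w$, with the base case $\#w = 1$ handled directly and the inductive step reducing to Lemma~\ref{lemma:phimw0} to pass from $\phi\p w = 0$ to $\phi\p{m\p w} = 0$, contradicting the induction hypothesis since $\#m\p w = \#w - 2$ is odd and strictly smaller. Your additional remarks checking the hypothesis $\#w \ge 2$ of Lemma~\ref{lemma:phimw0} and noting the asymmetry with $\xi$ are accurate but not required for the argument.
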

\begin{proof}
    Induction on $\# w$. For $\# w = 1$ there are only finitely many cases to consider. Assume now, the statement is true for $\# w < n$. Let $\# w = n$ and $\phi \p w = 0$. Then $\phi \p{m \p w} = 0$, so $n - 2$ is odd. 
\end{proof}

\begin{lemma} \label{lemma:phimne0}
    Let $w \in \abc^{\ge 2}$ such that $\phi \p{l^2 \p w} = \phi \p{r^2 \p w} \ne 0$. Then 
    \begin{align*}
        \phi \p{m \p w} \ne 0.
    \end{align*}
\end{lemma}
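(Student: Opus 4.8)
The plan is to branch on the parity of $\# w$. If $\# w$ is odd --- and note $\# w \ge 3$ is forced, since $\# w = 2$ would give $l^2 \p w = \upepsilon$ and $\phi \p{l^2 \p w} = 0$ --- then $m \p w$ has odd length $\# w - 2 \ge 1$, so $\phi \p{m \p w} \ne 0$ is immediate from corollary~\ref{corll:phiodd}. Hence the only real work is the even case, where $\# w \ge 4$ and $m \p w, l^2 \p w, r^2 \p w$ all have even length $\ge 2$, so that none of them lies in $\crl \zero ^\odd \cup \crl \one ^\odd$ and $\phi$ is computed on each by the last clause of its definition. This is exactly where $\phi$ is easier than $\xi$: there is no analogue of the strings $\crl T ^\even \cup \crl{CT}^\even$ that forced the auxiliary cases in the proof of theorem~\ref{thrm:ximne0}, so no induction or sign-chain argument will be needed, only corollary~\ref{corll:phiodd}.

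For the even case, write $n = \# w$. Using \eqref{eq:lrrl} (and the definition $m = l \circ r$) identify the neighbours of the three central windows: set $A := r \p{l^2 \p w} = l \p{m \p w}$ and $B := l \p{r^2 \p w} = r \p{m \p w}$; then also $l \p{l^2 \p w} = l^3 \p w$ and $r \p{r^2 \p w} = r^3 \p w$. The point is that $l^3 \p w, A, B, r^3 \p w$ all have the odd length $n - 3 \ge 1$, so by corollary~\ref{corll:phiodd} each of $\phi \p{l^3 \p w}, \phi \p A, \phi \p B, \phi \p{r^3 \p w}$ lies in $\crl{-1, 1}$. Now $\phi \p{l^2 \p w} = \sgn \p{\phi \p A - \phi \p{l^3 \p w}} \ne 0$ forces $\phi \p A = - \phi \p{l^3 \p w}$ (both being $\pm 1$) and hence $\phi \p{l^2 \p w} = \phi \p A$; symmetrically $\phi \p{r^2 \p w} = \sgn \p{\phi \p{r^3 \p w} - \phi \p B} \ne 0$ forces $\phi \p{r^2 \p w} = - \phi \p B$. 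The hypothesis $\phi \p{l^2 \p w} = \phi \p{r^2 \p w}$ thus reads $\phi \p A = - \phi \p B$, and therefore $\phi \p{m \p w} = \sgn \p{\phi \p B - \phi \p A} = \sgn \p{2 \phi \p B} = \phi \p B \in \crl{-1, 1}$, which is the claim.

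There is no deep obstacle here; the only care needed is (i) getting the slicing identities for $A$ and $B$ right, so that they really are the common neighbours of $m \p w$ and of $l^2 \p w, r^2 \p w$, and (ii) noticing that ``$n$ even'' makes the length $n - 3$ of those neighbours odd, which is precisely what licenses the appeal to corollary~\ref{corll:phiodd}. An alternative, staying closer to the proof of theorem~\ref{thrm:ximne0}, is to assume $\phi \p{m \p w} = 0$ and use observation~\ref{observ:phi0} to propagate zeros to $l \p{m \p w}$ and $r \p{m \p w}$, then invoke lemma~\ref{lemma:philwrw0} for a contradiction; but once corollary~\ref{corll:phiodd} is in hand this route collapses into the short computation above.
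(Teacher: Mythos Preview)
Your proof is correct. The overall strategy matches the paper's---reduce to even $\# w$ via corollary~\ref{corll:phiodd}, then unpack the $\sgn$ definition of $\phi$ on $l^2 \p w$, $m \p w$, $r^2 \p w$ in terms of the four depth-$3$ substrings $l^3 \p w$, $A = l^2 r \p w$, $B = l r^2 \p w$, $r^3 \p w$---but the execution differs. The paper argues by contradiction: assuming $\cp \phi m w = 0$ gives $\phi \p A = \phi \p B$, and then the pair of equalities $\sgn \p{\phi \p A - \cp \phi {l^3} w} = \sgn \p{\cp \phi {r^3} w - \phi \p A} \ne 0$ forces $\phi \p A = 0$, contradicting corollary~\ref{corll:phiodd} since $A$ has odd length. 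You instead apply corollary~\ref{corll:phiodd} \emph{up front} to all four depth-$3$ strings, so that every $\sgn$ of a difference of $\pm 1$'s collapses to a direct equality ($\cp \phi {l^2} w = \phi \p A$, $\cp \phi {r^2} w = -\phi \p B$), and $\cp \phi m w = \phi \p B$ drops out with no contradiction and no sign-chain. This is a genuine streamlining over the paper's argument, which was written to mirror the proof of theorem~\ref{thrm:ximne0}; your version shows that for $\phi$ the inequality machinery is unnecessary.
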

\begin{proof}
    $\# w = 2 \implies \cp \phi \lsq w = 0$. Now let $\# w \ge 3$ and $\cp \phi m w = 0$. Then 
    \begin{align}
        \p{\phi \circ l^2 \circ r} \p w 
        \by{\eqref{eq:phi0}}
        \p{\phi \circ l \circ r^2} \p w
        \label{eq:proof:phil2rphilr2}
    \end{align}
    and by corollary~\ref{corll:phiodd} $\# w$ is even so $\lsq \p w, \rsq \p w \notin \crlzero \odd \cup \crlone \odd$. 
    Hence 
    \begin{align*}
        \begin{split}
            \cp \phi {l^2} w = \cp \phi {r^2} w \ne 0 
            \iff &
        \\
            \sgn \p{
                \p{\phi \circ l^2 \circ r} \p w - 
                \p{\phi \circ l^3} \p w 
            } = &
            \sgn \p{
                \p{\phi \circ r^3} \p w - 
                \p{\phi \circ l \circ r^2} \p w 
            } \ne 0
        \\
            \by[\iff]{\eqref{eq:proof:phil2rphilr2}} &
        \\
            \sgn \p{
                \p{-\phi \circ l^3} \p w +
                \p{\phi \circ l^2 \circ r} \p w 
            } = &
            \sgn \p{
                \p{\phi \circ r^3} \p w - 
                \p{\phi \circ l^2 \circ r} \p w 
            } \ne 0
        \\ 
            \implies &
            \p{\phi \circ l^2 \circ r} \p w = 0,
        \end{split}
    \end{align*}
    which is impossible because $\# \p{\p{\lsq \circ r} \p w}$ is odd. 
\end{proof}

\begin{lemma} \label{lemma:phimw0w}
    Let $w \in \abc^2$ such that $\cp \phi m w = 0$. Then 
    \begin{align*}
        \phi \p w \ne 0 \iff 
        w \in \Lambda^{\ge 2} = 
        \bigcup_{n \ge 1} \crl{
            \zero^n \one, 
            \zero \one^n, 
            \one \zero^n, 
            \one^n \zero
        }.
    \end{align*}
\end{lemma}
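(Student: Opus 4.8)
The plan is to treat the hypothesis as $w \in \abc^{\ge 2}$ and to reduce the equivalence to the sign behaviour of $\phi$ on $l \p w$ and $r \p w$. First I would note that $\phi \p{m \p w} = 0$ already forces $\# w$ to be even: if $\# w$ were odd, then $m \p w$ would have odd length $\ge 1$, and corollary~\ref{corll:phiodd} would give $\phi \p{m \p w} \ne 0$. The implication $w \in \Lambda^{\ge 2} \implies \phi \p w \ne 0$ then needs no real work, since for each of the four shapes $\zero^n \one$, $\zero \one^n$, $\one \zero^n$, $\one^n \zero$ the value $\phi \p w$ is $\pm 1$ by lemma~\ref{lemma:phi0i1j} and corollary~\ref{corll:phi1j0i}.

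For the converse, assume $\phi \p w \ne 0$. If $\# w = 2$, then $m \p w = \upepsilon$ and table~\ref{tab:phi} shows $\phi \p w \ne 0$ exactly for $w \in \crl{\bm{01}, \bm{10}} = \Lambda^{\ge 2} \cap \abc^2$, so assume $\# w \ge 4$. Since $\# w$ is even, $w \notin \crl \zero ^\odd \cup \crl \one ^\odd$, so observation~\ref{observ:phi0} turns $\phi \p w \ne 0$ into $\phi \p{l \p w} \ne \phi \p{r \p w}$; as $l \p w, r \p w$ have odd length, corollary~\ref{corll:phiodd} makes both these values non-zero, hence $\crl{\phi \p{l \p w}, \phi \p{r \p w}} = \crl{-1, 1}$. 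Using $\phi \circ C = -\phi$ from~\eqref{eq:phiCphiR}, the commutation rules~\eqref{eq:CllCCrrC} and~\eqref{eq:CmmC}, and the fact that $C$ permutes the four families comprising $\Lambda^{\ge 2}$, I may assume $\phi \p{l \p w} = 1$ and $\phi \p{r \p w} = -1$.

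Now I split three ways according to $\phi \p{l^2 \p w}$ and $\phi \p{r^2 \p w}$. If $\phi \p{l^2 \p w} = 0$, then the two length-$(\# w - 2)$ subwords of $l \p w$, namely $l^2 \p w$ and $m \p w$, both have $\phi$-value $0$, so lemma~\ref{lemma:philwrw0} applied to $l \p w$ forces $l \p w \in \crl{\zero^{\# w - 1}, \one^{\# w - 1}}$; then $\phi \p{l \p w} = 1$ selects $\one^{\# w - 1}$, and $\phi \p{r \p w} = -1$ forces the last letter of $w$ to be $\zero$, i.e.\ $w = \one^{\# w - 1} \zero \in \Lambda^{\ge 2}$. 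The case $\phi \p{r^2 \p w} = 0$ is symmetric and gives $w = \one \zero^{\# w - 1} \in \Lambda^{\ge 2}$. Finally, if $\phi \p{l^2 \p w} \ne 0$ and $\phi \p{r^2 \p w} \ne 0$, then the implication~\eqref{eq:phiphil} applied to $l \p w$ gives $\phi \p{l^2 \p w} = -1$, the implication~\eqref{eq:phiphir} applied to $r \p w$ gives $\phi \p{r^2 \p w} = -1$, hence $\phi \p{l^2 \p w} = \phi \p{r^2 \p w} \ne 0$, and lemma~\ref{lemma:phimne0} then yields $\phi \p{m \p w} \ne 0$ — contradicting the hypothesis. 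So the last case cannot occur, and $w \in \Lambda^{\ge 2}$ in every case.

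No induction is needed beyond the two-letter base case. The one step to watch is the last subcase: the crux is simply recognising that the ``both non-zero'' configuration of $\phi \p{l^2 \p w}$ and $\phi \p{r^2 \p w}$ is exactly the hypothesis of lemma~\ref{lemma:phimne0}, which is what closes it off. Everything else is routine manipulation of the sign relations together with the already-known values of $\phi$ on the constant and the alternating strings.
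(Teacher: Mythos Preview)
Your argument is correct. It differs from the paper's in organisation rather than in spirit. The paper proves the forward implication by contrapositive: assuming $w \notin \Lambda^{\ge 2}$ (and hence $l \p w, r \p w \notin \crlzero \odd \cup \crlone \odd$), it computes directly
\[
    \phi \p w = \sgn \p{\cp \phi \rsq w + \cp \phi \lsq w},
\]
observes that both summands are non-zero (because $\cp \phi l w, \cp \phi r w$ have odd length and equal $-\cp \phi \lsq w, \cp \phi \rsq w$), and then uses lemma~\ref{lemma:phimne0} to force them to have opposite signs, whence $\phi \p w = 0$. You instead argue directly, normalise by $C$, and split on whether $\cp \phi \lsq w$ or $\cp \phi \rsq w$ vanishes; in the vanishing subcases you invoke lemma~\ref{lemma:philwrw0} to pin down $l \p w$ or $r \p w$ as a constant string and read off $w$ explicitly, while in the ``both non-zero'' subcase you reach the same contradiction via lemma~\ref{lemma:phimne0}. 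The paper's route is more uniform (no case split, no WLOG), but it silently needs the side condition $l \p w, r \p w \notin \crlzero \odd \cup \crlone \odd$, which your case analysis handles head-on; your route in turn makes visible exactly which members of $\Lambda^{\ge 2}$ arise in each branch.
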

\begin{proof}
    $\impliedby$ is established in lemma~\ref{lemma:phi0i1j} and corollary~\ref{corll:phi1j0i}. To show $\implies$ assume $w \notin \Lambda^{\ge 2}$ but $\cp \phi m w = 0$. Then $l \p w, r \p w \notin \crlzero \odd \cup \crlone \odd$ and by corollary~\ref{corll:phiodd} $\# w$ is even, so neither is $w$. Hence 
    \begin{align*}
        \cp \phi l w &= 
        \sgn \p{\cp \phi m w - \cp \phi \lsq w} = 
        - \cp \phi \lsq w
    \\
        \cp \phi r w &= 
        \sgn \p{\cp \phi \rsq w - \cp \phi m w} = 
        \cp \phi \rsq w
    \\
        \phi \p w &= 
        \sgn \p{\cp \phi r w - \cp \phi l w} = 
        \sgn \p{\cp \phi \rsq w + \cp \phi \lsq w}.
    \end{align*}
    Since $\cp \# l w$ and $\cp \# r w$ are odd, both summands are non-zero. However, by lemma~\ref{lemma:phimne0} they cannot be equal either. Hence $\phi \p w = 0$. 
\end{proof}

\section{Relations between \texorpdfstring{$\xi$}{xi} and \texorpdfstring{$\phi$}{phi}} 

\begin{lemma} \label{lemma:phimwximw0}
    Let $w \in \abc^{\ge 2}$ such that $\phi \p{m \p w} = \xi \p{m \p w} = 0$. Then 
        \begin{align}
            \phi \p w = \xi \p w
            \label{eq:phimwximw0}.
        \end{align}
\end{lemma}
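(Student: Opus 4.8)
The plan is to dispose of the small cases by hand, reduce to the sign"~recursions for $\phi$ and $\xi$, and then split according to whether $m \p w$ is a power of $\zero$ or of $\one$. If $\# w = 2$, then $m \p w = \upepsilon$, the hypothesis holds automatically, and $\phi \p w = \xi \p w$ is read off tables~\ref{tab:xi} and~\ref{tab:phi}. If $\# w$ is odd, then $\phi \p{m \p w} \ne 0$ by corollary~\ref{corll:phiodd}, so there is nothing to prove. So assume $\# w = n \ge 4$ is even. I would first note that $w \notin \crl \zero ^ \odd \cup \crl \one ^ \odd$ (parity) and that $w$ is not of the form $T^n$ or $CT^n$ either — otherwise $m \p w$ would equal $CT^{n-2}$ or $T^{n-2}$, which has $\xi = \mp 1 \ne 0$, contradicting $\xi \p{m \p w} = 0$. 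Hence neither the exceptional clause of the definition of $\phi$ nor that of $\xi$ applies to $w$, so
\[
    \phi \p w = \sgn \p{\phi \p{r \p w} - \phi \p{l \p w}},
    \qquad
    \xi \p w  = \sgn \p{\xi \p{l \p w} + \xi \p{r \p w}}.
\]

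\emph{Degenerate case: $m \p w$ is a power of $\zero$ or of $\one$.} By \eqref{eq:phiCphiR} and \eqref{eq:xiCxiR} it suffices to treat $m \p w = \zero^{n-2}$, in which case $w$ is one of $\zero^n$, $\zero^{n-1}\one$, $\one\zero^{n-1}$, $\one\zero^{n-2}\one$. The first and last are palindromes of even length, so $\phi \p w = \xi \p w = 0$ by corollaries~\ref{corll:phipalindrome} and~\ref{corll:xipalindrome}; for $\zero^{n-1}\one$ and $\one\zero^{n-1}$, lemmas~\ref{lemma:xi0i1j} and~\ref{lemma:phi0i1j} together with corollaries~\ref{corll:xi1j0i} and~\ref{corll:phi1j0i} compute $\xi$ and $\phi$, and the two values coincide once one uses that $n$ is even.

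\emph{Generic case: $m \p w$ is not a power of $\zero$ or of $\one$.} This is where I expect the real work. From $\xi \p{m \p w} = 0$, observation~\ref{observ:xi0} gives $\xi \p{l \p{m \p w}} = -\xi \p{r \p{m \p w}}$, and these are nonzero, since otherwise lemma~\ref{lemma:xilr0} would make $m \p w$ a power of $\zero$ or $\one$; set $\delta := \xi \p{l \p{m \p w}}$. Similarly, from $\phi \p{m \p w} = 0$, observation~\ref{observ:phi0} and lemma~\ref{lemma:philwrw0} (recall $m \p w$ has even length) give $\phi \p{l \p{m \p w}} = \phi \p{r \p{m \p w}} =: \epsilon \ne 0$; here necessarily $n \ge 6$. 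Using the commutations $l \circ m = m \circ l$ and $r \circ m = m \circ r$ one has $r \p{l^2 \p w} = l \p{m \p w}$, $l \p{r^2 \p w} = r \p{m \p w}$, $r \p{l^3 \p w} = l \p{l \p{m \p w}}$, $l \p{r^3 \p w} = r \p{r \p{m \p w}}$, and $r \p{l \p{m \p w}} = l \p{r \p{m \p w}} = m \p{m \p w}$, where $\xi \p{m \p{m \p w}} = 0$ by lemma~\ref{lemma:xi0xim0}. Running the sign"~recursions two levels down on each side — using $\phi \p{m \p w} = \xi \p{m \p w} = 0$, corollary~\ref{corll:phiodd} (so that $\phi$ is nonzero on every odd"~length substring), and lemma~\ref{lemma:xixil}, with the exceptional clauses excluded as before — I aim to prove
\[
    \phi \p{l^2 \p w} = \epsilon = -\phi \p{r^2 \p w},
    \qquad
    \xi \p{l^2 \p w} = \delta = -\xi \p{r^2 \p w}.
\]
Feeding these back through the outer recursions gives $\phi \p{l \p w} = -\epsilon = \phi \p{r \p w}$ and $\xi \p{l \p w} = \delta = -\xi \p{r \p w}$, hence $\phi \p w = 0 = \xi \p w$, as claimed.

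The step I expect to be the main obstacle is the $\xi$"~half of the last display, namely pinning down $\xi \p{l^2 \p w}$ exactly: the recursion only gives $\xi \p{l^2 \p w} = \sgn \p{\xi \p{l^3 \p w} + \delta}$, which a priori lies in $\crl{0, \delta}$, and symmetrically $\xi \p{r^2 \p w} \in \crl{0, -\delta}$, so the ``mixed'' configurations that would force $\xi \p w = \pm \delta \ne 0$ are not yet excluded. The way I would close this is to show $\xi \p{l^3 \p w} \ne -\delta$: since $r \p{l^3 \p w} = l \p{l \p{m \p w}}$, and observation~\ref{observ:xi0} applied to $l \p{m \p w}$ (which has $\xi = \delta \ne 0$ and whose $r$"~image $m \p{m \p w}$ has $\xi = 0$) shows $\xi \p{l \p{l \p{m \p w}}} \ne 0$, lemma~\ref{lemma:xixil} forces $\xi \p{l \p{l \p{m \p w}}} = \delta$; then lemma~\ref{lemma:xixil} applied to $l^3 \p w$ leaves only $\xi \p{l^3 \p w} \in \crl{0, \delta}$, whence $\xi \p{l^2 \p w} = \delta$. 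The mirror argument yields $\xi \p{r^2 \p w} = -\delta$. The $\phi$"~half is routine, since corollary~\ref{corll:phiodd} makes $\phi$ nonzero on odd"~length substrings, so $\phi \p{l^3 \p w}$ is nonzero and, by the level"~two recursion, cannot equal $\epsilon$, hence equals $-\epsilon$ and $\phi \p{l^2 \p w} = \epsilon$ follows; similarly on the right.
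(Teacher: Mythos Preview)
Your argument is correct. The $\phi$"~half in the generic case is cleaner than your prose suggests: rather than arguing that $\phi \p{l^3 \p w} \ne \epsilon$ ``by the level"~two recursion'', note simply that $\phi \p{l^2 \p w} = \sgn\p{\epsilon - \phi \p{l^3 \p w}} \in \crl{0, \epsilon}$, and $\phi \p{l^2 \p w} = 0$ would force $\phi \p{l \p w} = 0$, contradicting corollary~\ref{corll:phiodd}; so $\phi \p{l^2 \p w} = \epsilon$ directly. Your $\xi$"~half, pushing the recursion down to $l^2 \p{m \p w}$ via lemma~\ref{lemma:xi0xim0} and lemma~\ref{lemma:xixil}, is the substantive step, and it goes through once one observes (as you do) that the generic case forces $n \ge 6$, so every substring invoked has positive length.

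This is a genuinely different route from the paper. The paper does not unwind the recursions at all: it invokes the classification lemmata~\ref{lemma:ximw0w} and~\ref{lemma:phimw0w}, which say that, under $\cp \xi m w = 0$ respectively $\cp \phi m w = 0$, one has $\xi \p w \ne 0 \iff w \in \Lambda^{\ge 2}$ and $\phi \p w \ne 0 \iff w \in \Lambda^{\ge 2}$ (for even $\# w$). The two nonvanishing loci coincide, and on the four families $\zero^k\one, \zero\one^k, \one^k\zero, \one\zero^k$ the explicit formulae already give $\phi = \xi$. Your approach trades those two classification lemmata for a direct two"~level descent through the sign recursions; it is more hands"~on and self"~contained, while the paper's is shorter because the structural work has been packaged into lemmata~\ref{lemma:ximw0w} and~\ref{lemma:phimw0w}.
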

\begin{proof}
    By corollary~\ref{corll:phiodd} $\# w$ is even. Then by lemmata~\ref{lemma:ximw0w} and \ref{lemma:phimw0w} 
    \begin{align*}
        \xi  \p w \ne 0 \iff 
        \phi \p w \ne 0 \iff 
        w \in \Lambda^{\ge 2}.
    \end{align*}
    Set $k = \# w - 1$. According to lemmata~\ref{lemma:xi0i1j}, \ref{lemma:phi0i1j}, corollaries~\ref{corll:xi1j0i} and \ref{corll:phi1j0i} 
    \begin{align*}
        \xi  \p{\zero^k \one} &= 
        \xi  \p{\zero \one^k} = 
        \phi \p{\zero^k \one} = 
        \phi \p{\zero \one^k} = 
        1 
    \\
        \xi  \p{\one^k \zero} &= 
        \xi  \p{\one \zero^k} = 
        \phi \p{\one^k \zero} = 
        \phi \p{\one \zero^k} = 
        -1.
    \end{align*}
\end{proof}

\begin{thrm} \label{thrm:phi0xi0}
    Let $w \in \abc^*$. 
    \begin{align}
        \phi \p w = 0 \iff 
        \# w \iseven \land \xi \p w = 0
        \label{eq:phi0xi0}
    \end{align}
\end{thrm}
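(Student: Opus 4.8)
The plan is to first dispose of the odd-length case with corollary~\ref{corll:phiodd}, reducing the theorem to the claim that $\phi \p w = 0 \iff \xi \p w = 0$ for strings $w$ of even length, and then to establish that claim by induction on $\# w$, with lemma~\ref{lemma:phimwximw0} doing all the work in the inductive step.

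First I would observe that if $\# w$ is odd, then $\phi \p w \ne 0$ by corollary~\ref{corll:phiodd}, while $\# w$ is of course not even, so both sides of \eqref{eq:phi0xi0} are false and the equivalence holds vacuously. It therefore remains to treat even $\# w$, where the right-hand side of \eqref{eq:phi0xi0} simplifies to $\xi \p w = 0$. For the base case $\# w = 0$ one has $\phi \p \upepsilon = \xi \p \upepsilon = 0$, so both conditions hold.

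For the inductive step, take $w$ with $\# w \ge 2$ even (so that $m \p w$ is again of even length, namely $\# w - 2$) and assume the equivalence for all shorter strings. If $\phi \p w = 0$, then $\phi \p{m \p w} = 0$ by lemma~\ref{lemma:phimw0}, hence $\xi \p{m \p w} = 0$ by the induction hypothesis, and then lemma~\ref{lemma:phimwximw0} gives $\phi \p w = \xi \p w$, whence $\xi \p w = 0$. Conversely, if $\xi \p w = 0$, then $\xi \p{m \p w} = 0$ by lemma~\ref{lemma:xi0xim0}, hence $\phi \p{m \p w} = 0$ by the induction hypothesis, and lemma~\ref{lemma:phimwximw0} again yields $\phi \p w = \xi \p w = 0$.

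I do not expect a genuine obstacle: the substantive analysis is already contained in lemmata~\ref{lemma:phimw0}, \ref{lemma:xi0xim0} and \ref{lemma:phimwximw0}. The only subtlety is bookkeeping with parity — since $m$ removes exactly two symbols it preserves evenness, so the induction stays inside even lengths and the induction hypothesis is only ever applied to even-length strings, which is exactly the form in which it is needed.
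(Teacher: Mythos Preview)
Your proposal is correct and follows essentially the same route as the paper: induction on the length, with lemmata~\ref{lemma:phimw0}, \ref{lemma:xi0xim0} and \ref{lemma:phimwximw0} supplying the inductive step and corollary~\ref{corll:phiodd} handling odd lengths. The only cosmetic difference is that you peel off the odd case first and induct over even lengths, whereas the paper folds the parity check into a single induction over all lengths.
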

\begin{proof}
    Induction on $\# w$. For $\# w \le 1$ there are only finitely many cases to consider. Now assume, the statement holds for $\# w < n$. Let $\# w = n$. 
    \begin{align*}
        \phi \p w = 0 
        \by[\implies]{\eqref{eq:phiodd}}&
        \# w \iseven
    \\
        \begin{split}
            \phi \p w = 0 
            \by[\implies]{\eqref{eq:phimw0}}&
            \phi \p{m \p w} = 0
        \\
            \by[\implies]{\eqref{eq:phi0xi0}}&
            \xi  \p{m \p w} = 0
        \\
            \by[\implies]{\eqref{eq:phimwximw0}}&
            \xi \p w = \phi \p w = 0          
        \end{split}
    \\ 
        \begin{split}
            \# w \iseven \land \xi \p w = 0
            \by[\implies]{\eqref{eq:xi0xim0}}&
            \# w \iseven \land \xi \p{m \p w} = 0
        \\
            \by[\implies]{\eqref{eq:phi0xi0}}&
            \phi \p{m \p w} = 0
        \\
            \by[\implies]{\eqref{eq:phimwximw0}}&
            \phi \p w = \xi \p w = 0            
        \end{split}
    \end{align*}
\end{proof}

\begin{corll} \label{corll:philphir}
    Let $w \in \abc^{\ge 1}$ be such that $\xi \p{l \p w}, \xi \p{r \p w} \ne 0$. 
    Then 
    \begin{align}
        \phi \p{l \p w} = \phi \p{r \p w} \iff 
        \# w \iseven \land 
        \xi \p{l \p w} \ne \xi \p{r \p w}
        \label{eq:philphir}
    \end{align}
\end{corll}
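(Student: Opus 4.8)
The plan is to split on the parity of $\# w$ and reduce both directions of the equivalence to theorem~\ref{thrm:phi0xi0}; essentially all of the content sits in that theorem, and what remains is the bookkeeping that places $w$ in the recursive (``else'') branch of the definitions of $\phi$ and of $\xi$. First I would observe that $\xi$ vanishes on $\abc^{\le 1}$, so the hypothesis $\xi \p{l \p w}, \xi \p{r \p w} \ne 0$ already forces $\# l \p w = \# r \p w \ge 2$, hence $\# w \ge 3$ (and $\# w \ge 4$ when $\# w$ is even); thus whenever the hypothesis is in force we stay clear of the base cases of both recursions.

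The next step is a pair of small exclusions drawn from corollary~\ref{corll:xipalindrome} and corollary~\ref{corll:xiTodd}. The string $w$ is not in $\crl \zero ^\odd \cup \crl \one ^\odd$, since otherwise $l \p w$ would lie in $\crl \zero ^* \cup \crl \one ^*$, hence be a palindrome with $\xi \p{l \p w} = 0$. And when $\# w$ is even, $w$ is not in $\crl T ^\even \cup \crl{CT}^\even$, since otherwise $l \p w$ would be $T^{k-1}$ or $CT^{k-1}$ with $k - 1$ odd, whence $\xi \p{l \p w} = 0$ by corollary~\ref{corll:xiTodd}. Together with $w \ne \upepsilon$, the first exclusion places $w$ in the ``else'' branch of $\phi$, so $\phi \p w = \sgn \p{\phi \p{r \p w} - \phi \p{l \p w}}$, and, when $\# w$ is even, the second places $w$ in the ``else'' branch of $\xi$, so $\xi \p w = \sgn \p{\xi \p{l \p w} + \xi \p{r \p w}}$.

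For $\# w$ odd the right-hand side of \eqref{eq:philphir} is false, so it suffices to check $\phi \p{l \p w} \ne \phi \p{r \p w}$, and this is immediate: $\phi \p w = \sgn \p{\phi \p{r \p w} - \phi \p{l \p w}}$ while $\phi \p w \ne 0$ by corollary~\ref{corll:phiodd}. For $\# w$ even I would chain three equivalences. Because $\xi \p{l \p w}, \xi \p{r \p w} \in \crl{-1, 1}$, the formula for $\xi \p w$ gives $\xi \p w = 0 \iff \xi \p{l \p w} \ne \xi \p{r \p w}$; because $\sgn \p{a - b} = 0 \iff a = b$, the formula for $\phi \p w$ gives $\phi \p w = 0 \iff \phi \p{l \p w} = \phi \p{r \p w}$; and, $\# w$ being even, theorem~\ref{thrm:phi0xi0} gives $\phi \p w = 0 \iff \xi \p w = 0$. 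Composing the three yields $\phi \p{l \p w} = \phi \p{r \p w} \iff \xi \p{l \p w} \ne \xi \p{r \p w}$, which together with the true conjunct $\# w \iseven$ is precisely \eqref{eq:philphir}.

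I do not expect a genuine obstacle. The only slightly delicate points are the length/parity bookkeeping that makes the base cases vacuous and the two short palindrome arguments that land $w$ in the ``else'' branches of $\phi$ and $\xi$; once those are in place, theorem~\ref{thrm:phi0xi0} carries the rest.
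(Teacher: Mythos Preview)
Your argument is correct and follows essentially the same route as the paper: reduce both sides to the vanishing of $\phi(w)$ and $\xi(w)$ and invoke theorem~\ref{thrm:phi0xi0}. The paper compresses this into a single three-step chain $\phi(l(w))=\phi(r(w)) \iff \phi(w)=0 \iff \#w\text{ even}\land\xi(w)=0 \iff \#w\text{ even}\land\xi(l(w))\ne\xi(r(w))$ by citing the prepackaged corollaries \eqref{eq:phi0*} and \eqref{eq:xi0*}, whereas you re-derive the ``else branch'' exclusions behind those corollaries by hand and split off the odd case via corollary~\ref{corll:phiodd}; no new idea is needed either way.
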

\begin{proof}
    \begin{align*}
        \phi \p{l \p w} = \phi \p{r \p w} 
        &\by[\iff]{\eqref{eq:phi0*}}
        \phi \p w = 0
    \\
        &\by[\iff]{\eqref{eq:phi0xi0}}
        \# w \iseven \land \xi \p w = 0
    \\
        &\by[\iff]{\eqref{eq:xi0*}}
        \# w \iseven \land 
        \xi \p{l \p w} \ne \xi \p{r \p w}
    \end{align*}
\end{proof}

\begin{remark}
    By giving a necessary and sufficient condition for the exceptions, corollary ~\ref{corll:philphir} shows that usually $\phi \p{l \p w} \ne \phi \p{r \p w}$. Later the alternating colouring function $\psi$ shall be introduced as an improvement of $\phi$ for which the inequality always hold. 
\end{remark}

\begin{lemma} \label{lemma:phillphirr}
    Let $w \in \abc^{\ge 2}$ such that 
    \begin{align}
        \cp \xi {l^2} w, \cp \xi {r^2} w \ne 
        0 = 
        \cp \xi m w
        \label{eq:cond:phillphirrxim0}.
    \end{align}
    Then 
    \begin{align}
        \cp \phi \lsq w = 
        \cp \phi \rsq w \iff
        \# w \isodd
        \label{eq:phillphirr}.
    \end{align}
\end{lemma}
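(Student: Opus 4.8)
The plan is to split on the parity of $\# w$. This is the natural dividing line: the hypothesis fixes $\xi \p{m \p w} = 0$, so by Theorem~\ref{thrm:phi0xi0} the value $\phi \p{m \p w}$ is determined as soon as the parity is known. Observe first that $\xi$ vanishes on every string of length $\le 1$, so the hypothesis $\xi \p{l^2 \p w} \ne 0$ already forces $\# w \ge 4$; in particular $l \p w, r \p w, l^2 \p w, m \p w, r^2 \p w$ are all defined, and $m \p w = l \p{r \p w} = r \p{l \p w}$ since $m = l \circ r$ and $l, r$ commute.

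\emph{Case $\# w$ even.} Then $l^2 \p w, m \p w, r^2 \p w$ all have even length. Theorem~\ref{thrm:phi0xi0} turns $\xi \p{l^2 \p w}, \xi \p{r^2 \p w} \ne 0$ into $\phi \p{l^2 \p w}, \phi \p{r^2 \p w} \ne 0$, and $\xi \p{m \p w} = 0$ into $\phi \p{m \p w} = 0$. If $\phi \p{l^2 \p w} = \phi \p{r^2 \p w}$ held, the common value would be non-zero and Lemma~\ref{lemma:phimne0} would give $\phi \p{m \p w} \ne 0$ — a contradiction. So $\phi \p{l^2 \p w} \ne \phi \p{r^2 \p w}$, which is \eqref{eq:phillphirr} in this case, both sides being false.

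\emph{Case $\# w$ odd.} Now $l \p w, r \p w$ have even length while $l^2 \p w, m \p w, r^2 \p w$ have odd length. The first task is to show $\xi \p{l \p w} \ne 0$ and $\xi \p{r \p w} \ne 0$: by observation~\ref{observ:xi0}, $\xi \p{l \p w}$ could only vanish if $\xi \p{l^2 \p w} = -\xi \p{m \p w} = 0$, which is false; likewise for $r \p w$. By Theorem~\ref{thrm:phi0xi0} a string with non-zero $\xi$-value has non-zero $\phi$-value, so $\phi \p{l \p w} \ne 0$ and $\phi \p{r \p w} \ne 0$. Having even length $\ge 2$, neither $l \p w$ nor $r \p w$ lies in $\crl \zero ^ \odd \cup \crl \one ^ \odd$, so the recursive clause of the definition of $\phi$ applies and gives
\begin{align*}
    \phi \p{l \p w} &= \sgn \p{\phi \p{m \p w} - \phi \p{l^2 \p w}}
\\
    \phi \p{r \p w} &= \sgn \p{\phi \p{r^2 \p w} - \phi \p{m \p w}}.
\end{align*}
Non-vanishing of the two left-hand sides yields $\phi \p{l^2 \p w} \ne \phi \p{m \p w}$ and $\phi \p{r^2 \p w} \ne \phi \p{m \p w}$. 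Finally, $l^2 \p w, m \p w, r^2 \p w$ have odd length, so by Corollary~\ref{corll:phiodd} all three $\phi$-values lie in $\crl{-1, 1}$; with only two values to choose from, $\phi \p{l^2 \p w} = -\phi \p{m \p w} = \phi \p{r^2 \p w}$, which is \eqref{eq:phillphirr} in this case, both sides being true.

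The even case is essentially immediate from Theorem~\ref{thrm:phi0xi0} and Lemma~\ref{lemma:phimne0}. The step I expect to be the real obstacle is the odd case — more precisely, obtaining $\phi \p{l \p w}, \phi \p{r \p w} \ne 0$, which forces the detour through $\xi \p{l \p w}, \xi \p{r \p w} \ne 0$ via observation~\ref{observ:xi0}; this non-vanishing is exactly what powers the two-value pigeonhole on the odd-length substrings. It is also prudent to check the smallest cases $\# w \in \crl{4, 5}$ directly against tables~\ref{tab:xi} and \ref{tab:phi}, even though the hypothesis already rules out $\# w \le 3$.
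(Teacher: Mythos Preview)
Your proof is correct, but it follows a different line from the paper's. The paper does \emph{not} split on parity: it first shows $l\p w, r\p w \notin \crl T^\even \cup \crl{CT}^\even \cup \crlzero * \cup \crlone *$, then uses the recursive clauses to obtain $\cp\xi l w = \cp\xi{l^2}w$, $\cp\xi r w = \cp\xi{r^2}w$, $\cp\phi l w = -\cp\phi{l^2}w$, $\cp\phi r w = \cp\phi{r^2}w$, and finishes with the chain $\cp\phi{l^2}w = \cp\phi{r^2}w \iff \cp\phi l w \ne \cp\phi r w \iff \#w$ odd, the last two steps being Corollary~\ref{corll:philphir} and Theorem~\ref{thrm:ximne0}. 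Your argument trades those two results for their $\phi$--side counterparts: in the even case you invoke Lemma~\ref{lemma:phimne0} (the $\phi$--analogue of Theorem~\ref{thrm:ximne0}) directly, which is very clean; in the odd case you replace the appeal to Corollary~\ref{corll:philphir} by the two-value pigeonhole coming from Corollary~\ref{corll:phiodd}. The paper's route makes the lemma visibly the ``$k=2$'' companion of Corollary~\ref{corll:philphir} (this is how it is later used alongside that corollary in Lemma~\ref{lemma:philllphirrr}), whereas your route is more self-contained and arguably more transparent case by case; in particular your even case is a one-liner, and your odd case avoids Theorem~\ref{thrm:ximne0} entirely.
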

\begin{proof}
    Note that $
        l \p w, r \p w
        \by[\notin]
        {\eqref{eq:cond:phillphirrxim0}} 
        \crlT \even \cup 
        \crlCT \even \cup 
        \crlzero * \cup 
        \crlone *
    $, so 
    \begin{align}
        \cp \xi l w 
        =& 
        \sgn \p{
            \cp \xi {l^2} w + 
            \cp \xi m w
        } 
        \by{\eqref{eq:cond:phillphirrxim0}}
        \cp \xi {l^2} w
        \label{eq:proof:phillphirrxil}
    \\
        \cp \xi r w 
        =& 
        \sgn \p{
            \cp \xi m w + 
            \cp \xi {r^2} w
        } 
        \by{\eqref{eq:cond:phillphirrxim0}}
        \cp \xi {r^2} w
        \label{eq:proof:phillphirrxir}
    \\
        \cp \phi l w 
        =& 
        \sgn \p{
            \cp \phi m w -
            \cp \phi {l^2} w
        } 
        \by{\eqref{eq:cond:phillphirrxim0}}
        -\cp \phi {l^2} w
        \label{eq:proof:phillphirrphil}
    \\
        \cp \phi r w 
        =& 
        \sgn \p{
            \cp \phi {r^2} w -
            \cp \phi m w
        } 
        \by{\eqref{eq:cond:phillphirrxim0}}
        \cp \phi {r^2} w
        \label{eq:proof:phillphirrphir}.
    \end{align}
    Hence 
    \begin{align*}
        \cp \phi {l^2} w = \cp \phi {r^2} w 
        \by[\iff]{
            \eqref{eq:proof:phillphirrphil},
            \eqref{eq:proof:phillphirrphir}
        }&
        \cp \phi l w \ne \cp \phi r w 
    \\  
        \spaceby[\iff]
        {\eqref{eq:philphir}}
        {
            \eqref{eq:proof:phillphirrxil},
            \eqref{eq:proof:phillphirrxir}
        }&
        \# w \isodd \lor 
        \cp \xi l w = \cp \xi r w
    \\ 
        \by[\iff]{
            \eqref{eq:proof:phillphirrxil},
            \eqref{eq:proof:phillphirrxir}
        }&
        \# w \isodd \lor 
        \cp \xi \lsq w = \cp \xi \rsq w
    \\ 
        \spaceby[\iff]
        {\eqref{eq:xillxirrxim0}}
        {
            \eqref{eq:proof:phillphirrxil},
            \eqref{eq:proof:phillphirrxir}
        }&
        \# w \isodd[.]      
    \end{align*}
\end{proof}

\begin{lemma} \label{lemma:philllphirrr}
    Let $k, n \in \N, w \in \abc^\N$ such that $k \ge 1$ and 
    \begin{align*}
        \forall \j \le k \quad \p{
            \xi \p{\slice w \j {\j+n}} \ne 0 \iff
            \j \in \crl{0, k}
        }.
    \end{align*}
    Then 
    \begin{align}
        \phi \p{\slice w 0 n} = 
        \phi \p{\slice w k {k+n}} \iff 
        n \isodd \land 
        \xi \p{\slice w 0 n} \ne 
        \xi \p{\slice w k {k+n}}
        \label{eq:philllphirrr}.
    \end{align}
\end{lemma}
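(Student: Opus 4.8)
The plan is to avoid induction on $k$ and reduce each configuration to earlier results. Write $W_j = \slice w j {j+n}$, so the hypothesis says $\xi \p{W_j} = 0$ exactly for $1 \le j \le k-1$. As $\xi$ vanishes on all strings of length $\le 1$, the hypothesis $\xi \p{W_0} \ne 0$ is vacuous unless $n \ge 2$, which I assume. For $k = 1$ the statement is corollary~\ref{corll:philphir} for $\slice w 0 {n+1}$, and if $n$ is even and $k = 2$ it is lemma~\ref{lemma:phillphirr} for $\slice w 0 {n+2}$ — there the conclusion that $n$ is odd agrees with the condition that $n$ is odd and $\xi \p{W_0} \ne \xi \p{W_2}$, because theorem~\ref{thrm:ximne0} forbids $\xi \p{W_0} = \xi \p{W_2} \ne 0$. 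So assume $k \ge 2$. Note $\xi \p{W_0} \ne 0$ forces $\phi \p{W_0} \ne 0$ by theorem~\ref{thrm:phi0xi0}, likewise $\phi \p{W_k} \ne 0$; also $W_0, W_k$ are not constant strings, nor (for odd $n$) elements of $\crl T ^\even \cup \crl{CT}^\even$, so the recursive clauses of $\phi$ and (for odd $n$) of $\xi$ apply to them.

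\emph{Almost-constant case.} Suppose $n$ is odd with $\xi \p{\slice w 1 n} = 0$, or $n$ is even with $k \ge 3$; I claim $\slice w 0 {k+n} = C\p c\, c^{k+n-2}\, C\p c$ for a letter $c$. For odd $n$: observation~\ref{observ:xi0} on each $W_j$ ($1 \le j \le k-1$) gives $\xi \p{\slice w {j+1} {j+n}} = -\xi \p{\slice w j {j+n-1}}$, so the vanishing of $\xi \p{\slice w 1 n}$ (the first such length-$(n-1)$ window) propagates, making $\xi$ — hence $\phi$, by theorem~\ref{thrm:phi0xi0} since $n-1$ is even — vanish on $\slice w j {j+n-1}$ for all $1 \le j \le k$, and then lemma~\ref{lemma:philwrw0} applied to each $W_j$ shows $W_1,\dots,W_{k-1}$ are constant strings. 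For even $n$: $\phi$ vanishes on $W_1,\dots,W_{k-1}$ by theorem~\ref{thrm:phi0xi0}, and since the length-$n$ windows cannot be fed to lemma~\ref{lemma:philwrw0} (their length-$(n-1)$ halves have odd length, so nonzero $\phi$ by corollary~\ref{corll:phiodd}), one applies it instead to the length-$(n+1)$ windows $\slice w j {j+n+1}$, whose halves are $W_j, W_{j+1}$ and whose odd length forces the output constant, showing $\slice w j {j+n+1}$ is constant for $1 \le j \le k-2$. Either way overlapping constant windows share a letter, so $\slice w 1 {k+n-1}$ is a power of one letter $c$; and $\xi \p{W_0}, \xi \p{W_k} \ne 0$ force $w_0 \ne c \ne w_{k+n-1}$. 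Thus $W_0 = C\p c\, c^{n-1}$, $W_k = c^{n-1}\, C\p c$, and lemmata~\ref{lemma:xi0i1j},~\ref{lemma:phi0i1j} with corollaries~\ref{corll:xi1j0i},~\ref{corll:phi1j0i} give $\xi \p{W_0} \ne \xi \p{W_k}$ unconditionally and $\phi \p{W_0} = \phi \p{W_k}$ if and only if $n$ is odd — exactly the claimed equivalence.

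\emph{Remaining case: $n$ odd, $\xi \p{\slice w 1 n} \ne 0$.} Write $W'_j = \slice w j {j+n-1}$. Observation~\ref{observ:xi0} on $W_1,\dots,W_{k-1}$ gives $\xi \p{W'_{j+1}} = -\xi \p{W'_j}$, so $\xi \p{W'_j} = \alter{j-1}\xi \p{W'_1} \ne 0$ for $1 \le j \le k$, hence $\phi \p{W'_j} \ne 0$ by theorem~\ref{thrm:phi0xi0}; corollary~\ref{corll:philphir} applied to each $W_j$ (odd length $n$) then gives $\phi \p{W'_{j+1}} = -\phi \p{W'_j}$, so $\phi \p{W'_j} = \alter{j-1}\phi \p{W'_1}$. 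Evaluating the recursive clauses of $\phi$ and $\xi$ at $W_0$ and $W_k$ and using that $\xi \p{W_0}, \xi \p{W_k}, \phi \p{W_0}, \phi \p{W_k}$ are all nonzero yields $\phi \p{W_0} = \phi \p{W'_1}$, $\phi \p{W_k} = -\phi \p{W'_k}$, $\xi \p{W_0} = \xi \p{W'_1}$, $\xi \p{W_k} = \xi \p{W'_k}$. Hence $\phi \p{W_0} = \phi \p{W_k}$ if and only if $k$ is even, and $\xi \p{W_0} = \xi \p{W_k}$ if and only if $k$ is odd; since $n$ is odd, this is the claimed equivalence.

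I expect the almost-constant case to be the main obstacle: recognising that a run of interior windows with vanishing $\xi$ pins down the whole word, and arranging the application of lemma~\ref{lemma:philwrw0} at the right window length when $n$ is even. The rest is bookkeeping with the recursive definitions and their exceptional clauses.
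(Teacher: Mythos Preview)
Your proof is correct, and it follows a genuinely different route from the paper's for the main case. Both arguments open identically: $n\ge 2$ is forced, $k=1$ is corollary~\ref{corll:philphir}, and lemma~\ref{lemma:phillphirr} handles $k=2$. For $k\ge 3$ the paper applies lemma~\ref{lemma:xilr0} to the overlapping length-$(n+1)$ windows, concluding that $m\p{\slice w 0 {k+n}}$ is one of $\zero^{k+n-2}$, $\one^{k+n-2}$, $T^{k+n-2}$, $CT^{k+n-2}$; it then lists the six compatible forms of $\slice w 0 {k+n}$ and checks the equivalence in each by hand. You instead split on the parity of $n$ and, for odd $n$, on whether $\xi\p{W'_1}$ vanishes. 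In your almost-constant branch you reach the same constant middle, but via lemma~\ref{lemma:philwrw0} (the $\phi$-analogue) rather than lemma~\ref{lemma:xilr0}, and for even $n$ you cleverly shift to the length-$(n+1)$ windows so that the lemma applies. The real divergence is your remaining branch (odd $n$, $\xi\p{W'_1}\ne 0$): rather than recognising the $T$/$CT$ pattern explicitly, you propagate the alternating signs of $\xi$ and $\phi$ along the chain of length-$(n-1)$ windows via observation~\ref{observ:xi0} and corollary~\ref{corll:philphir}, then pin the endpoints with lemmata~\ref{lemma:xixil} and~\ref{lemma:phiphir}. This replaces the paper's case check by a clean parity-of-$k$ computation and makes visible \emph{why} the equivalence holds. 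The paper's argument is shorter to state but leaves the final verification implicit; yours is more structural. One expository quibble: your sentence invoking lemma~\ref{lemma:phillphirr} reads as if it is restricted to even $n$, yet the remark about theorem~\ref{thrm:ximne0} is only relevant for odd $n$ --- in fact you only need that base case for $n$ even and $k=2$, since the odd-$n$, $k=2$ configurations are already absorbed by your two later cases.
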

\begin{proof}
    Since $\xi \p{\slice w 0 n} \ne 0$, actually $n \ge 2$. The cases $k \in \crl{1, 2}$ are covered by corollary~\ref{corll:philphir} and lemma~\ref{lemma:phillphirr} respectively. Now let $k \ge 3$. Then by lemma~\ref{lemma:xilr0} 
    \begin{align*}
        m \p w \in \crl{
            \zero^{k+n-2}, \one^{k+n-2}
        } \lor 
        n \isodd \land 
        m \p w \in \crl{
            T^{k+n-2}, CT^{k+n-2}
        }
    \end{align*} 
    Taking into account the condition $\xi \p{\slice w 0 n}, \xi \p{\slice w k {k+n}} \ne 0$ gives 
    \begin{align*}        
        w &\in \crl{
            \one \zero^{k+n-2} \one, 
            \zero \one^{k+n-2} \zero
        } 
    \\
        {} \lor 
        w &\in \crl{
            \zero T^{k+n-2} \zero, 
            \one CT^{k+n-2} \one
        } \land{}            
        k \iseven{} \land{}
        n \isodd 
    \\
        {} \lor
        w &\in \crl{
            \zero T^{k+n-2} \one, 
            \one CT^{k+n-2} \zero
        } \land{}
        k \isodd \land{}
        n \isodd[.]
    \end{align*}
    In each of these cases the statement holds. 
\end{proof}

\begin{remark} 
    The proof idea is to determine those spots in the de Bruijn graph where several edges $w$ for which $\xi \p w = 0$ follow upon each other. By looking at the graphs one gets the impression that happens only around $\zero^n$, $\one^n$ and around the centre of odd-dimensional de Bruijn graphs, which lemma~\ref{lemma:xilr0} confirms. In all other cases the statement is already established by corollary~\ref{corll:philphir} and lemma~\ref{lemma:phillphirr}. 
\end{remark}

\section{The alternating colouring function \texorpdfstring{$\psi$}{psi}}
\label{section:psi}

\begin{defn}
    Define $\psi: \abc^* \to \crl{-1, 0, 1}$ by
    \begin{align}        
        \psi \p w 
        &= \p{\xi \p w}^{\# w} \cdot \phi \p w
        \label{eq:psi}.
    \end{align}
    $\psi$ is called the \empar[alternating colouring function~$\psi$]{alternating colouring function}. 
    
    $w \in \abc^*$ is called \empar[colourable string]{colourable} if $\psi \p w \ne 0$. 
    For $n \ge 2$ the \sft{} generated by prohibiting the strings in $\crl{w \in \abc^n; \psi \p w = 0}$ is called the \empar{maximal $n$"~colourable shift}. 
    Any subshift of the maximal $n$"~colourable shift is called \empar[$n$-colourable shift]{$n$"~colourable}. 
\end{defn}

\begin{remark}
    In figures~\ref{fig:deBruijn1psi} to \ref{fig:deBruijn4psi} the vertices and edges are coloured according to the value $\psi$ assigns to them. Colourable are those vertices and edges that are coloured blue or red. 
    
    The maximal $n$-colourable shift is the edge shift of the $\p{n-1}$-dimensional de Bruijn graph of which all the non-colourable edges (the green ones in figures~\ref{fig:deBruijn1xi} to \ref{fig:deBruijn4xi} that is) have been removed. Equivalently it is the vertex shift of the $n$-dimensional de Bruijn graph of which all non-colourable vertices have been removed. 
\end{remark}

\begin{table}
    \centering
    \begin{tabular}{c|cccccccc}
        $w$ && 
        $\upepsilon$ & 
        $\zero$ & $\one$ & 
        $\bm{00}$ & $\bm{01}$ & $\bm{10}$ & $\bm{11}$        
    \\ \hline
        $\psi \p w$ && 
        0 & 
        0 & 0 & 
        0 & 1 & -1 & 0 
    \\ \hline \hline 
        $w$ & 
        $\bm{000}$ & $\bm{001}$ & $\bm{010}$ & $\bm{011}$ & $\bm{100}$ & $\bm{101}$ & $\bm{110}$ & $\bm{111}$         
    \\ \hline
        $\psi \p w$ & 
        0 & 1 & 0 & -1 & -1 & 0 & 1 & 0
    \end{tabular}
    \caption{The values $\psi \p w$ of the alternating colouring for $w \in \abc^{\le3}$}
    \label{tab:psi}
\hrulefill\end{table}

\begin{figure}
    \centering
    \includegraphics
    [height=\graphicheight]
    {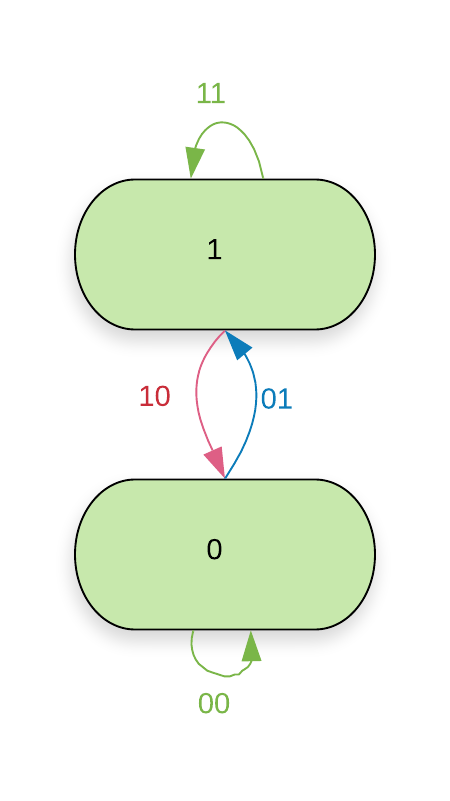}
    \caption{The $1$-dimensional de Bruijn graph. The edges $\bm{01}$ and $\bm{10}$ are coloured blue and red respectively indicating that $\psi \p{\bm{01}} = 1 \land \psi \p{\bm{10}} = -1$. The other two edges and the vertices are coloured green because $\psi$ is $0$ there.}
    \label{fig:deBruijn1psi}
     \hrulefill  \end{figure}
\begin{figure}
    \centering
    \includegraphics
    [height=\graphicheight]
    {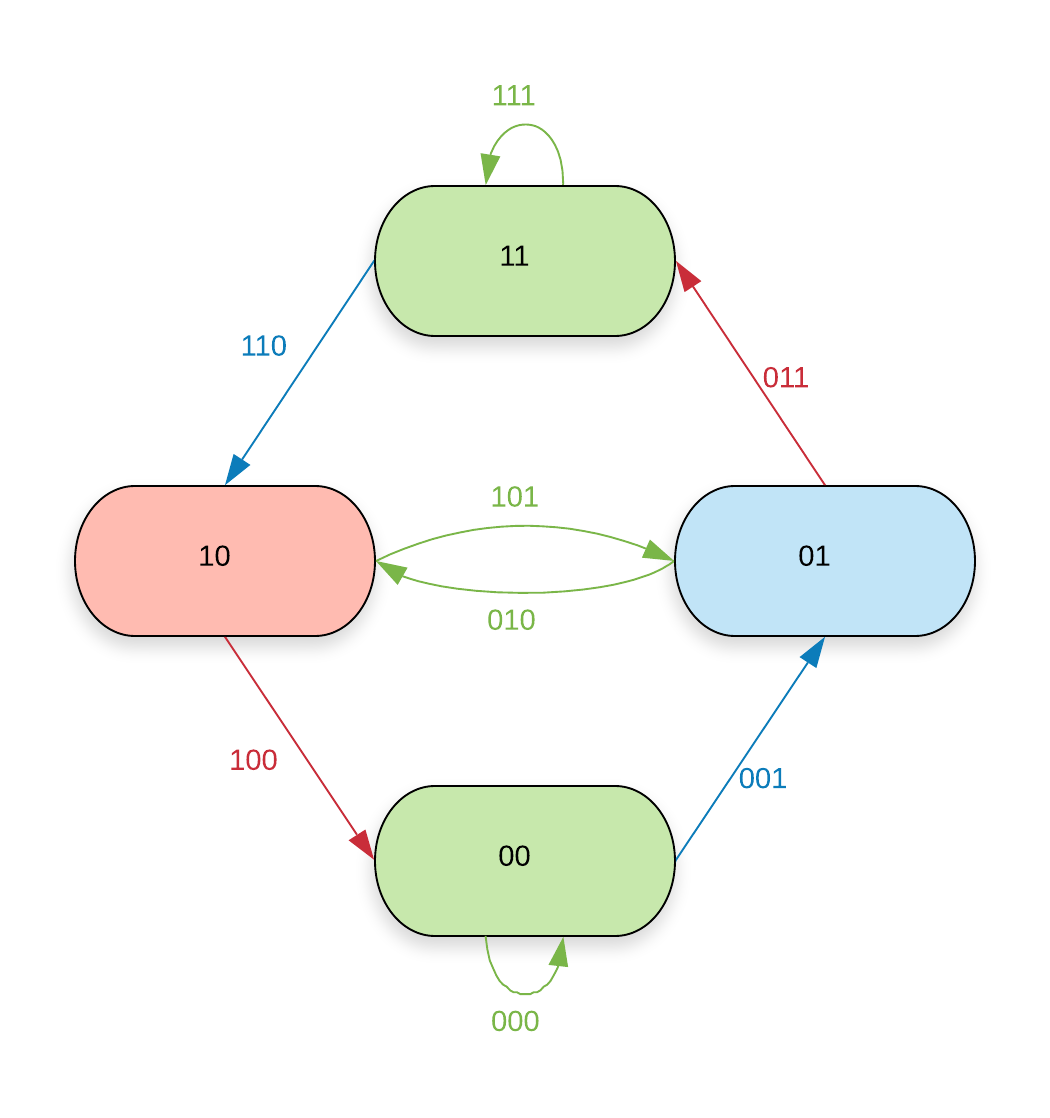}
    \caption{The $2$-dimensional de Bruijn graph. The vertices and edges are coloured red if $\psi = -1$, green if $\psi = 0$ and blue if $\psi = 1$.}
    \label{fig:deBruijn2psi}
\hrulefill\end{figure}
\begin{figure}
    \centering
    \includegraphics
    [height=\graphicheight]
    {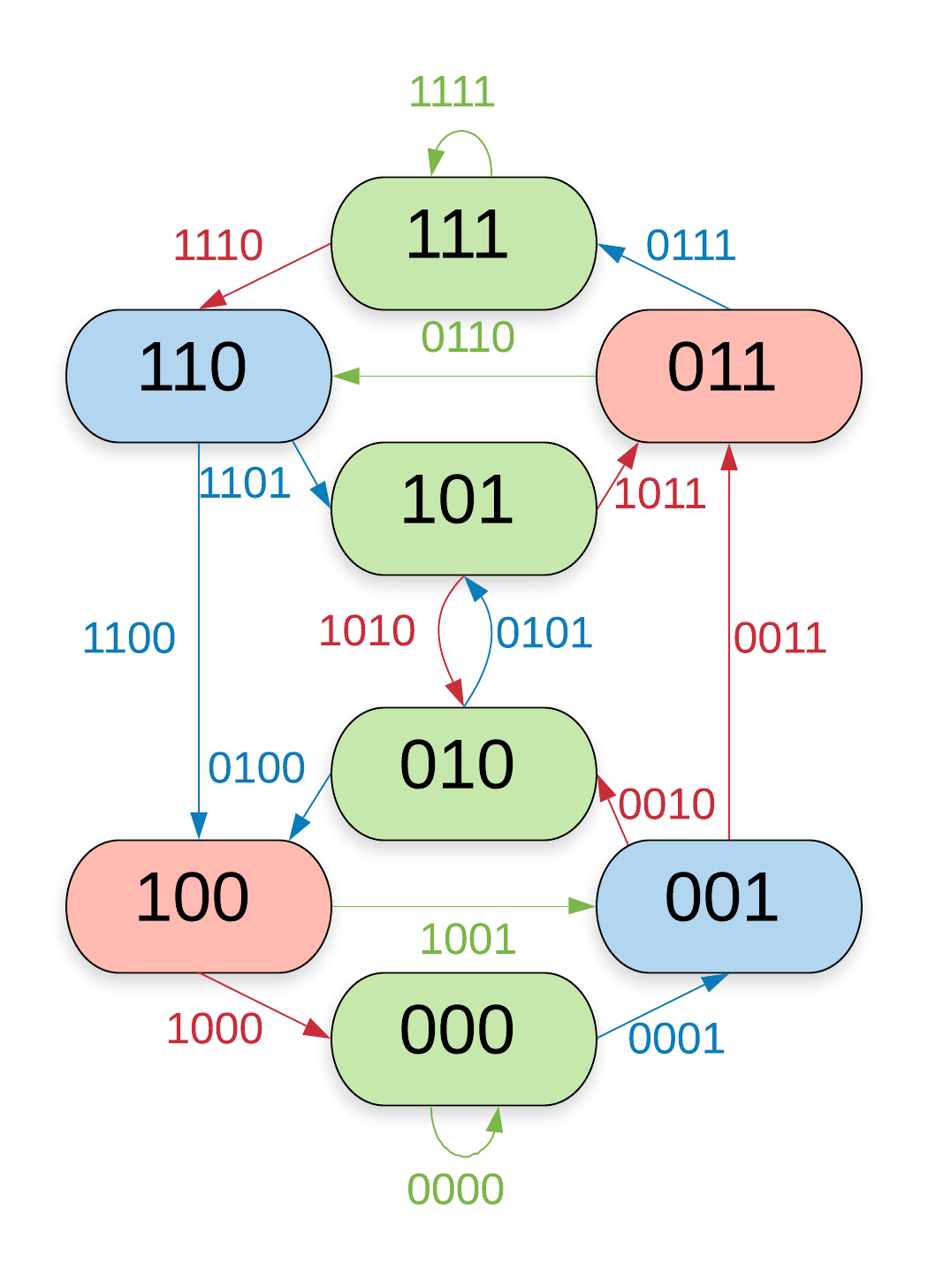}
    \caption{The $3$-dimensional de Bruijn graph. The vertices and edges are coloured red if $\psi = -1$, green if $\psi = 0$ and blue if $\psi = 1$.}
    \label{fig:deBruijn3psi}
\hrulefill\end{figure}
\begin{figure}
    \centering
    \includegraphics
    [width=\textwidth]
    {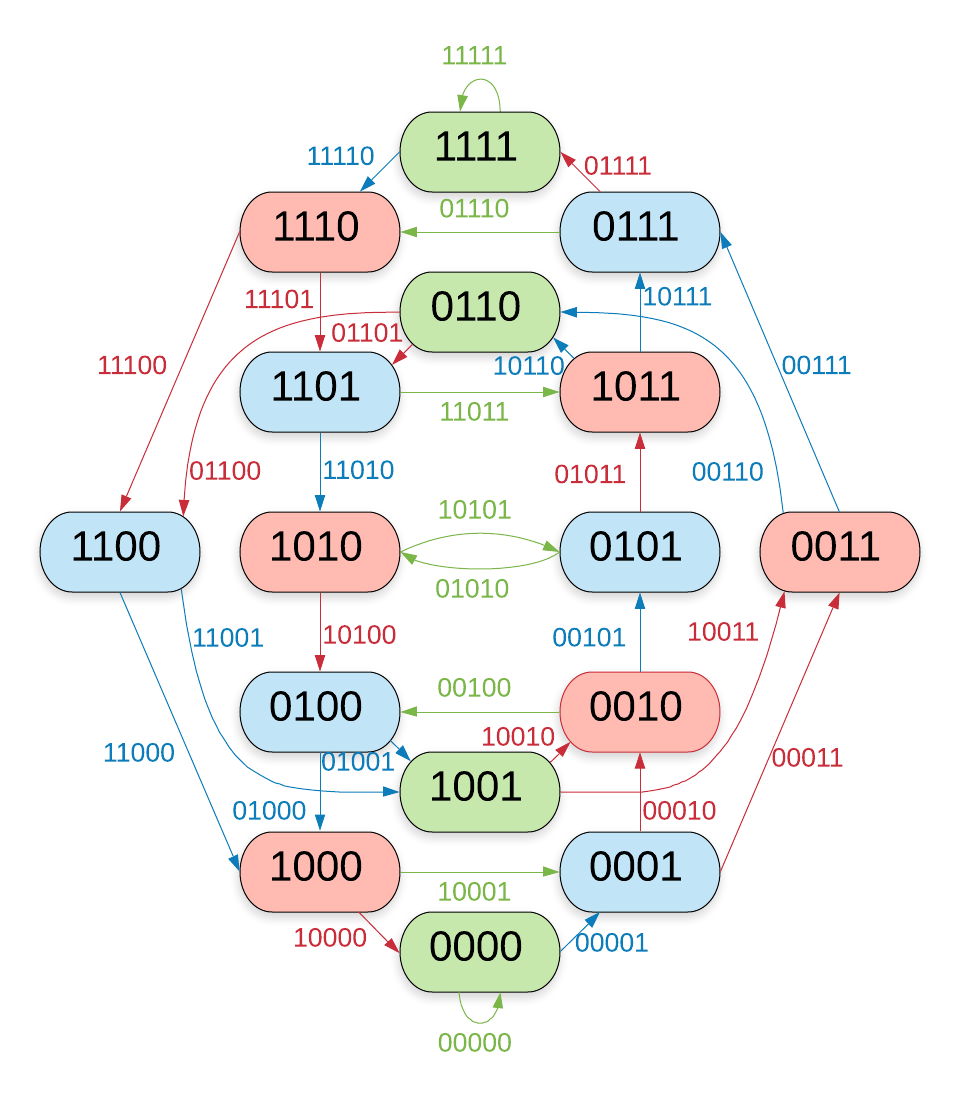}
    \caption{The $4$-dimensional de Bruijn graph. The vertices and edges are coloured red if $\psi = -1$, green if $\psi = 0$ and blue if $\psi = 1$.}
    \label{fig:deBruijn4psi}
\hrulefill\end{figure}

\begin{lemma}
    \begin{align}
        \p{\psi \circ C} \p w = 
        \p{-1}^{\# w + 1} \cdot \psi \p w &&
        \psi \circ R = -\psi 
        \label{eq:psiCpsiR}
    \end{align}
\end{lemma}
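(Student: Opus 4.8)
The plan is a direct computation from the definition $\psi \p w = \p{\xi \p w}^{\# w} \cdot \phi \p w$; no induction is needed. The only auxiliary observations are that both $C$ and $R$ preserve string length, so that $\# \p{C \p w} = \# \p{R \p w} = \# w$, together with the sign laws already established for $\xi$ and $\phi$ in \eqref{eq:xiCxiR} and \eqref{eq:phiCphiR}.

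First I would treat the complement. Substituting $C \p w$ into the definition, applying $\xi \p{C \p w} = -\xi \p w$ and $\phi \p{C \p w} = -\phi \p w$, and factoring out the two sign contributions, one obtains $\p{\psi \circ C}\p w = \p{-\xi \p w}^{\# w} \cdot \p{-\phi \p w} = \p{-1}^{\# w} \cdot \p{-1} \cdot \p{\xi \p w}^{\# w} \cdot \phi \p w = \p{-1}^{\# w + 1} \cdot \psi \p w$, which is the first identity.

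Next I would treat the reverse in exactly the same way, now using $\xi \p{R \p w} = -\xi \p w$ and $\p{\phi \circ R}\p w = \p{-1}^{\# w + 1} \cdot \phi \p w$. This gives $\p{\psi \circ R}\p w = \p{-1}^{\# w} \cdot \p{\xi \p w}^{\# w} \cdot \p{-1}^{\# w + 1} \cdot \phi \p w = \p{-1}^{2\# w + 1} \cdot \psi \p w = -\psi \p w$, since $\p{-1}^{2\# w} = 1$.

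There is no genuine obstacle here: the lemma is an immediate corollary of the corresponding laws for $\xi$ and $\phi$. The only point requiring a moment's care is the bookkeeping of the exponents of $-1$ — in particular noticing that the $\xi$-flip contributes a factor $\p{-1}^{\# w}$ (which only affects the outcome when $\# w$ is odd), and that in the reverse case this factor combines with the $\p{-1}^{\# w + 1}$ coming from $\phi$ to yield an overall sign of $-1$ regardless of the parity of $\# w$, whereas in the complement case it does not cancel and leaves the parity-dependent factor $\p{-1}^{\# w + 1}$.
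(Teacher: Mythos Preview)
Your proof is correct and follows essentially the same route as the paper: a direct substitution into the definition $\psi \p w = \p{\xi \p w}^{\# w} \cdot \phi \p w$, followed by application of the sign laws \eqref{eq:xiCxiR} and \eqref{eq:phiCphiR} and the obvious bookkeeping of powers of $-1$. The paper presents the two computations side by side without further commentary, but the content is identical to yours.
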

\begin{proof}
    \begin{align*}
        \begin{split}
            &\p{\psi \circ C} \p w 
        \\
            \spaceby{\eqref{eq:psi}}{
                {\eqref{eq:xiCxiR}},
                {\eqref{eq:phiCphiR}}
            } 
            &\p{\xi \p{C \p w}}^{\# w} \cdot 
            \phi \p{C \p w} 
        \\
            \by{
                {\eqref{eq:xiCxiR}},
                {\eqref{eq:phiCphiR}}
            }
            &\p{-\xi \p w}^{\# w} \cdot 
            -\phi \p w 
        \\
            \spaceby{\eqref{eq:psi}}{
                {\eqref{eq:xiCxiR}},
                {\eqref{eq:phiCphiR}}
            }
            &\p{-1}^{\# w + 1} \cdot \psi \p w      
        \end{split}
        & &\Bigg | &
        \begin{split}
            &\p{\psi \circ R} \p w 
        \\
            \spaceby{\eqref{eq:psi}}{
                {\eqref{eq:xiCxiR}},
                {\eqref{eq:phiCphiR}}
            } 
            &\p{\xi \p{R \p w}}^{\# w} \cdot 
            \phi \p{R \p w} 
        \\
            \by{
                {\eqref{eq:xiCxiR}},
                {\eqref{eq:phiCphiR}}
            }
            &\p{-\xi \p w}^{\# w} \cdot 
            \p{-1}^{\# w + 1} \cdot \phi \p w
        \\
            \spaceby{\eqref{eq:psi}}{
                {\eqref{eq:xiCxiR}},
                {\eqref{eq:phiCphiR}}
            }
            &-\psi \p w              
        \end{split}
    \end{align*}    
\end{proof}

\begin{lemma} \label{lemma:psi0xi0}
    Let $w \in \abc^*$. Then
    \begin{align}
        \psi \p w = 0 \iff \xi \p w = 0
        \label{eq:psi0xi0}.
    \end{align}       
\end{lemma}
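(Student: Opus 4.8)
The plan is to read off the equivalence directly from the definition $\psi \p w = \p{\xi \p w}^{\# w} \cdot \phi \p w$ together with theorem~\ref{thrm:phi0xi0}. The backward implication is the easy one: if $\xi \p w = 0$ and $\# w \ge 1$, then $\p{\xi \p w}^{\# w} = 0$, so $\psi \p w = 0$; and if $\# w = 0$, i.e.\ $w = \upepsilon$, then $\psi \p \upepsilon = 1 \cdot \phi \p \upepsilon = 0$ as well (here one should make explicit the convention $0^0 = 1$ so that the single edge case $w = \upepsilon$ is not overlooked). So $\xi \p w = 0 \implies \psi \p w = 0$ in all cases.

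For the forward implication I would argue by contraposition: assume $\xi \p w \ne 0$. Then necessarily $\# w \ge 1$, since $\xi \p \upepsilon = 0$. Because $\img \xi = \crl{-1, 0, 1}$, $\xi \p w \in \crl{-1, 1}$, hence $\p{\xi \p w}^{\# w} \in \crl{-1, 1}$ is a unit, and therefore $\psi \p w = \p{\xi \p w}^{\# w} \cdot \phi \p w = 0$ would force $\phi \p w = 0$. But theorem~\ref{thrm:phi0xi0} gives $\phi \p w = 0 \iff \# w \iseven \land \xi \p w = 0$, so $\phi \p w = 0$ implies $\xi \p w = 0$, contradicting our assumption. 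Hence $\psi \p w \ne 0$, which is the contrapositive of $\psi \p w = 0 \implies \xi \p w = 0$.

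Combining the two directions yields $\psi \p w = 0 \iff \xi \p w = 0$, which is \eqref{eq:psi0xi0}. I do not expect a genuine obstacle here; the whole content is packaged in theorem~\ref{thrm:phi0xi0}, and the only point requiring a little care is the treatment of the exponent $\# w$ when it is $0$ (the empty string) versus when it is positive. It may also be worth remarking in passing that this lemma is exactly what licenses calling the green vertices/edges in figures~\ref{fig:deBruijn1psi}–\ref{fig:deBruijn4psi} the same set as the green ones in figures~\ref{fig:deBruijn1xi}–\ref{fig:deBruijn4xi}.
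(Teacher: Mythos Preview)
Your argument is correct and follows essentially the same route as the paper: both hinge on the factorisation \eqref{eq:psi} together with theorem~\ref{thrm:phi0xi0}, the paper compressing it to the single line $\psi \p w = 0 \iff \xi \p w = 0 \lor \phi \p w = 0 \iff \xi \p w = 0$. Your version is simply more explicit about the empty-string edge case $0^0$, which the paper silently absorbs into the first equivalence.
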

\begin{proof}
    \begin{align*}
        \psi \p w = 0 
        \by[\iff]{\eqref{eq:psi}}
        \xi \p w = 0 \lor \phi \p w = 0
        \by[\iff]{\eqref{eq:phi0xi0}}
        \xi \p w = 0
    \end{align*}
\end{proof}
\begin{remark}
    Lemma~\ref{lemma:psi0xi0} will often be used without explicit reference. Saying that $w$ is colourable should always be understood as $\xi \p w, \psi \p w \ne 0$. To show colourability, of course the easier condition $\xi \p w \ne 0$ will be used. 
\end{remark}

\begin{corll}
    Let $w \in \abc^\even$. Then 
    \begin{align}
        \psi \p w = \phi \p w.
        \label{eq:psiphi}
    \end{align}
\end{corll}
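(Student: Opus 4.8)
The plan is to read off the claim from the defining formula \eqref{eq:psi} by case-splitting on whether $\xi \p w$ vanishes, invoking the two facts about zeros that are already in place: lemma~\ref{lemma:psi0xi0} (which says $\psi \p w = 0 \iff \xi \p w = 0$) and theorem~\ref{thrm:phi0xi0} (which, for even $\# w$, says $\phi \p w = 0 \iff \xi \p w = 0$).

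First I would treat the case $\xi \p w = 0$. Here lemma~\ref{lemma:psi0xi0} gives $\psi \p w = 0$ at once, and since $\# w$ is even, theorem~\ref{thrm:phi0xi0} (used from right to left) gives $\phi \p w = 0$ as well; so both sides of \eqref{eq:psiphi} equal $0$. Then I would treat the case $\xi \p w \ne 0$, i.e. $\xi \p w \in \crl{-1, 1}$. Because $\# w$ is even, $\p{\xi \p w}^{\# w} = 1$, and substituting this into \eqref{eq:psi} gives $\psi \p w = \phi \p w$ directly.

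I do not expect a genuine obstacle: the corollary is essentially bookkeeping around the definition of $\psi$ plus theorem~\ref{thrm:phi0xi0}. The only point worth a moment's care is the empty string $w = \upepsilon$, where the literal factor $\p{\xi \p \upepsilon}^{\# \upepsilon}$ is of the form $0^0$; but $\upepsilon$ falls squarely under the first case (both $\xi$ and $\phi$ vanish there) and lemma~\ref{lemma:psi0xi0} already handles it, so no separate discussion of $0^0$ is needed.
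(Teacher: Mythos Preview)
Your argument is correct and matches the paper's approach: the paper's proof is a one-liner citing lemma~\ref{lemma:psi0xi0} and the definition of $\psi$, and your case split on $\xi \p w$ simply spells this out. Your explicit appeal to theorem~\ref{thrm:phi0xi0} in the case $\xi \p w = 0$ is arguably a small improvement in precision, since lemma~\ref{lemma:psi0xi0} alone does not immediately yield $\phi \p w = 0$ there.
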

\begin{proof}
    The statement follows directly from lemma~\ref{lemma:psi0xi0} and the definition of $\psi$. 
\end{proof}

\begin{thrm} \label{thrm:psilllpsirrr}
    Let $k, n \in \N, w \in \abc^\N$ such that $k \ge 1$ and 
    \begin{align*}
        \slice w \j {\j+n} \text{ is colourable} \iff
        \j \in \crl{0, k}.
    \end{align*}
    Then 
    \begin{align}
        \psi \p{\slice w 0 n} \ne 
        \psi \p{\slice w k {k+n}}
        \label{eq:psilllpsirrr}.
    \end{align}
\end{thrm}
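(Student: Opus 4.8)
The plan is to reduce everything to Lemma~\ref{lemma:philllphirrr} and then split on the parity of $n$. The first observation is that, by Lemma~\ref{lemma:psi0xi0}, a string $v$ is colourable exactly when $\xi\p v \ne 0$; hence the hypothesis ``$\slice w \j {\j+n}$ is colourable $\iff \j \in \crl{0,k}$'' is word-for-word the hypothesis of Lemma~\ref{lemma:philllphirrr} (it holds in particular for every $\j \le k$), and moreover $n \ge 2$ since $\xi$ vanishes on strings of length at most $1$. So Lemma~\ref{lemma:philllphirrr} applies and yields
\begin{align*}
    \phi\p{\slice w 0 n} = \phi\p{\slice w k {k+n}}
    \iff n \isodd \land
    \xi\p{\slice w 0 n} \ne \xi\p{\slice w k {k+n}}.
\end{align*}

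If $n$ is even, I would invoke \eqref{eq:psiphi}: then $\psi$ agrees with $\phi$ on both $\slice w 0 n$ and $\slice w k {k+n}$, while the right-hand side of the biconditional above is automatically false (it demands that $n$ be odd). Hence $\phi\p{\slice w 0 n} \ne \phi\p{\slice w k {k+n}}$, i.e. $\psi\p{\slice w 0 n} \ne \psi\p{\slice w k {k+n}}$, and the even case is finished.

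If $n$ is odd, I would set $a = \xi\p{\slice w 0 n}$, $a' = \xi\p{\slice w k {k+n}}$, $b = \phi\p{\slice w 0 n}$, $b' = \phi\p{\slice w k {k+n}}$. Colourability forces $a, a' \ne 0$, and Corollary~\ref{corll:phiodd} (odd length) forces $b, b' \ne 0$, so all four values are $\pm 1$. Since $n$ is odd, $\p{\xi\p v}^{\# v}$ collapses to $\xi\p v$, so by \eqref{eq:psi} it suffices to show $ab \ne a'b'$. From the $n$-odd instance of Lemma~\ref{lemma:philllphirrr} one has $b = b'$ precisely when $a \ne a'$; a two-line check of the two alternatives $a = a'$ and $a \ne a'$ then gives $aa'\cdot bb' = -1$ in both, i.e. $ab = -a'b'$, hence $ab \ne a'b'$. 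That settles the odd case, and hence the theorem.

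There is no genuine obstacle left at this level: essentially all the work has already been done in Lemma~\ref{lemma:philllphirrr} and, through it, in Theorem~\ref{thrm:phi0xi0}. The only thing to be careful about is the parity bookkeeping — that the exponent $\# v$ in the definition of $\psi$ turns $\xi$ into a genuine sign exactly for odd $n$, which is precisely what produces the sign flip in the odd case, and that for even $n$ the biconditional of Lemma~\ref{lemma:philllphirrr} can never be satisfied, so $\phi$, and therefore $\psi$, must differ at the two endpoints.
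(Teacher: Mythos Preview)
Your proof is correct and follows essentially the same route as the paper's: reduce to Lemma~\ref{lemma:philllphirrr} via Lemma~\ref{lemma:psi0xi0}, then split on the parity of $n$, using \eqref{eq:psiphi} in the even case and the identity $\psi = \xi\cdot\phi$ in the odd case. Your odd-case argument with $a,a',b,b'$ simply spells out the sign bookkeeping that the paper compresses into a single ``$\ne$'' citing \eqref{eq:philllphirrr}.
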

\begin{proof}
    The statement can be seen as a corollary to lemma~\ref{lemma:philllphirrr}. First let $n$ be even. Then 
    \begin{align*}
        \psi \p{\slice w 0 n} 
        \by{\eqref{eq:psiphi}}
        \phi \p{\slice w 0 n}
        \by[\ne]{\eqref{eq:philllphirrr}}
        \phi \p{\slice w k {k+n}}
        \by{\eqref{eq:psiphi}}
        \psi \p{\slice w k {k+n}}.
    \end{align*}
    Now let $n$ be odd. Then 
    \begin{align*}
        \psi \p{\slice w 0 n} 
        \by{\eqref{eq:psi}}
        &\xi \p{\slice w 0 n} \cdot 
        \phi \p{\slice w 0 n}
    \\
        \by[\ne]{\eqref{eq:philllphirrr}}
        &\xi \p{\slice w k {k+n}} \cdot 
        \phi \p{\slice w k {k+n}}
    \\
        \by{\eqref{eq:psi}}
        &\psi \p{\slice w k {k+n}}.
    \end{align*}
\end{proof}

\begin{remark}
    Theorem~\ref{thrm:psilllpsirrr} states that in an infinite string, a colourable substring always has a different $\psi$"~colour than the preceding colourable substring of the same length -- no matter how many non-colourable substrings there are in between. 
\end{remark}

\begin{figure}
	\begin{huge}
	    \begin{align*}
	            \bm{
	                \blue  0
	                \green 1
	                \red   0
	                \blue  0
	                \green 1
	                \green 1
	                \red   1
	                \blue  1
	                \red   1
	                \blue  0
	                \red   0
	                \green 0
	                \blue  1
	                101... 
	            } 
	    \end{align*}
	\end{huge}
    \caption{The beginning of an infinite string $w \in \abc^\N$. For $k \in \N$ the character on place $k$ has been coloured red if $\psi \p{\slice w k {k+4}} = -1$, green if $\psi \p{\slice w k {k+4}} =  0$ and blue if $\psi \p{\slice w k {k+4}} =  1$. The first character is blue because $\psi \p{\bm{0100}} = 1$; the second one is green because $\psi \p{\bm{1001}} = 0$. The last three characters are printed black because their colour depends on the upcoming characters that are not given here. Ignoring the green characters, always a blue character follows on a red and vice versa. The fact that there are green characters means that this $w$ cannot be a point in a $4$-colourable shift.} 
    \label{fig:shiftpointnoncol}
\hrulefill\end{figure}

\begin{corll} \label{corll:psilpsir}      
    Let $w \in \abc^{\ge 1}$ be such that $l \p w, r \p w$ are colourable. 
    Then $\psi \p{l \p w} \ne \psi \p{r \p w}$. 
\end{corll}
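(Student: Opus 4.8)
The plan is to obtain this as a direct corollary of theorem~\ref{thrm:psilllpsirrr} with $k = 1$. Set $n = \# w - 1$; since $\upepsilon$, $\zero$ and $\one$ are not colourable, the hypothesis already forces $n \ge 2$. I would pick any infinite word $w' \in \abc^\N$ that begins with $w$ (for instance $w$ followed by $\zero\zero\zero\dots$). Because $l \p w = w_{[0,\#w-1)}$ and $r \p w = w_{[1,\#w)}$, this word satisfies $\slice{w'}{0}{n} = l \p w$ and $\slice{w'}{1}{1+n} = r \p w$.

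Next I would check that $w'$ meets the hypothesis of theorem~\ref{thrm:psilllpsirrr} for $k = 1$: one needs that, for $\j \le k$, the window $\slice{w'}{\j}{\j+n}$ is colourable exactly when $\j \in \crl{0, k}$. Here $k = 1$, so the only indices to consider are $\j = 0$ and $\j = 1$, both of which lie in $\crl{0, 1} = \crl{0, k}$; and indeed $\slice{w'}{0}{n} = l \p w$ and $\slice{w'}{1}{1+n} = r \p w$ are colourable by assumption. Thus the biconditional holds (there is no index strictly between $0$ and $k$), and theorem~\ref{thrm:psilllpsirrr} gives $\psi \p{l \p w} = \psi \p{\slice{w'}{0}{n}} \ne \psi \p{\slice{w'}{1}{1+n}} = \psi \p{r \p w}$.

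There is no real obstacle; the only point requiring a moment's care is that the colourability biconditional in theorem~\ref{thrm:psilllpsirrr} carries no content when $k = 1$ and is therefore automatically satisfied once $l \p w$ and $r \p w$ are known to be colourable. If one preferred a self-contained argument avoiding theorem~\ref{thrm:psilllpsirrr}, one could instead note that colourability of $l \p w, r \p w$ gives $\xi \p{l \p w}, \xi \p{r \p w} \ne 0$ and split on the parity of $\# w$: when $\# w$ is odd, $l \p w$ and $r \p w$ have even length, $\psi$ agrees with $\phi$ on them by \eqref{eq:psiphi}, and corollary~\ref{corll:philphir} gives $\phi \p{l \p w} \ne \phi \p{r \p w}$; when $\# w$ is even, $l \p w, r \p w$ have odd length, $\phi$ is nonzero there by corollary~\ref{corll:phiodd}, and corollary~\ref{corll:philphir} makes ``$\xi \p{l \p w} = \xi \p{r \p w}$'' equivalent to ``$\phi \p{l \p w} \ne \phi \p{r \p w}$'', from which $\psi \p{l \p w} = \xi \p{l \p w}\,\phi \p{l \p w}$ and $\psi \p{r \p w} = \xi \p{r \p w}\,\phi \p{r \p w}$ turn out to be opposite nonzero values in either case. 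Routing through theorem~\ref{thrm:psilllpsirrr} is shorter, so I would present that version.
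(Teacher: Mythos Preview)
Your proposal is correct and matches the paper's own proof: extend $w$ to an infinite string, set $k=1$ and $n=\#w-1$, and invoke theorem~\ref{thrm:psilllpsirrr}. Your observation that the biconditional hypothesis is vacuous for $k=1$ is exactly right, and your alternative parity-split argument (though not needed) is also sound.
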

\begin{proof}
    The statement follows from theorem~\ref{thrm:psilllpsirrr} by regarding $w$ as the beginning of an infinite string and setting $k = 1, n = \# w - 1$. 
\end{proof}

\begin{remark}
    Compare corollary ~\ref{corll:psilpsir} with corollary ~\ref{corll:philphir}. While there were some special cases in which $\phi \p{l \p w} \ne \phi \p{r \p w}$ would not hold, $\psi \p{l \p w} \ne \psi \p{r \p w}$ always does. 
\end{remark}

\begin{figure}
    \centering
    \includegraphics[width=\textwidth]{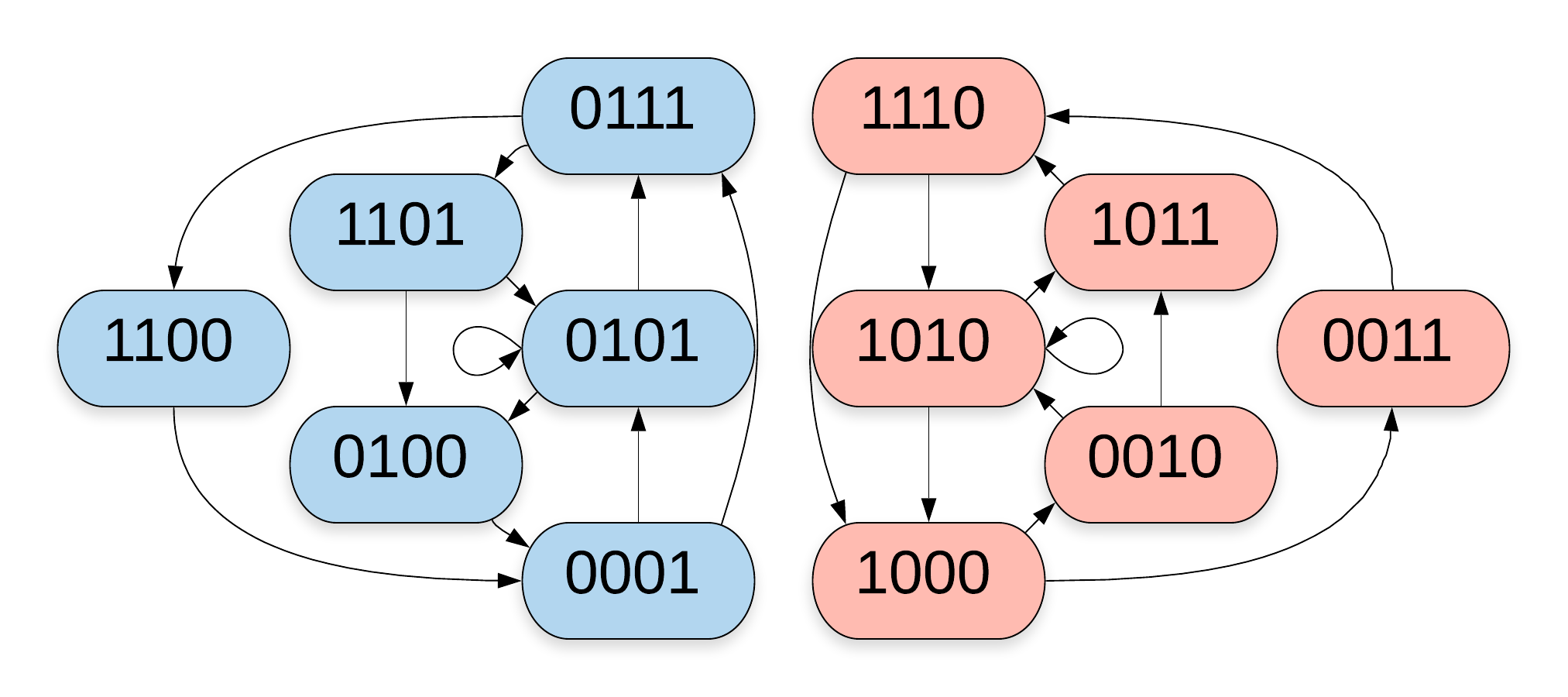}
    \caption{The \nth{2} power graph of the $4$-dimensional de Bruijn graph. The vertices whose $\xi$-value is zero are omitted as are the edges that represent walks which would have passed an omitted edge. The vertices are coloured red if $\xi = -1$ and blue if $\xi = 1$.}
    \label{fig:deBruijn4power}
    \hrulefill
\end{figure}

\begin{remark}
    Consider a de Bruijn graph of which all non-colourable vertices have been removed. Corollary~\ref{corll:psilpsir} states that the colour of the vertices a walk passes alternates between blue and red. For the resulting vertex shift that result will be expressed in corollary~\ref{corll:psialters}.
    
    In the even powers of such a graph there are only walks from vertices of one colour to vertices of the same colour. Figure~\ref{fig:deBruijn4power} shows the \nth{2} power of the $4$-dimensional de Bruijn subgraph without non-colourable edges. 
\end{remark}

\begin{corll} \label{corll:psialtersz}
    Let $k, n \in \N, w \in \abc^\N$ such that ${\slice w 0 n}, {\slice w k {k+n}}$ are colourable. Set $$z = \# \crl{\j < k; \slice w \j {\j + n} \text{ is not colourable}}.$$ Then 
    \begin{align}
        \psi \p{\slice w k {k+n}} = 
        \alter{k-z} \cdot \psi \p{\slice w 0 n}
        \label{eq:psialtersz}.
    \end{align}
\end{corll}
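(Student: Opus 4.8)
The plan is to deduce the corollary from Theorem~\ref{thrm:psilllpsirrr} by strong induction on $k$. For $k = 0$ the two windows $\slice w 0 n$ and $\slice w k {k+n}$ coincide, $z = 0$, and $\alter{0} = 1$, so there is nothing to prove. So assume $k \ge 1$ and that the statement holds, for the same $w$ and $n$, whenever $k$ is replaced by a smaller value.

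First I would locate the last colourable window strictly before position $k$: let $p$ be the largest index with $p < k$ such that $\slice w p {p+n}$ is colourable. It exists because $\slice w 0 n$ is colourable and $0 < k$, so $0 \le p < k$. By maximality of $p$, every window $\slice w \j {\j+n}$ with $p < \j < k$ is non-colourable, whereas $\slice w k {k+n}$ is colourable. Passing to the shifted point $w' \in \abc^\N$ given by $w'_i = w_{p+i}$, the hypothesis of Theorem~\ref{thrm:psilllpsirrr} holds with parameters $n$ and $k - p \ge 1$, so it yields $\psi \p{\slice w p {p+n}} \ne \psi \p{\slice w k {k+n}}$. Since a colourable string has $\psi \ne 0$ and $\img \psi = \crl{-1, 0, 1}$, both of these values lie in $\crl{-1, 1}$, and being distinct they satisfy $\psi \p{\slice w k {k+n}} = -\psi \p{\slice w p {p+n}}$.

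Next I would apply the induction hypothesis to the two colourable windows at positions $0$ and $p$. With $z' = \# \crl{\j < p; \slice w \j {\j+n} \text{ is not colourable}}$ this gives $\psi \p{\slice w p {p+n}} = \alter{p - z'} \psi \p{\slice w 0 n}$. For the bookkeeping, note that the windows counted by $z$ are those at an index $\j < p$ (there are $z'$ of them) together with those at an index $\j$ with $p \le \j < k$; of the latter, the one at $\j = p$ is colourable while the $k - 1 - p$ windows with $p < \j < k$ are all non-colourable, so $z = z' + (k - 1 - p)$ and hence $k - z = p - z' + 1$. Combining the two identities,
\[
    \psi \p{\slice w k {k+n}} = -\alter{p - z'} \psi \p{\slice w 0 n} = \alter{p - z' + 1} \psi \p{\slice w 0 n} = \alter{k - z} \psi \p{\slice w 0 n},
\]
which is \eqref{eq:psialtersz}.

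The substantive input is Theorem~\ref{thrm:psilllpsirrr}, already available, and no case split on the parity of $n$ is needed here since that theorem has absorbed it. The one place to be careful is the elementary parity count $k - z = p - z' + 1$, together with the observation that $p$ is well defined precisely because $k \ge 1$ --- which is what makes the inductive descent to $p < k$ legitimate.
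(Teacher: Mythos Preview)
Your proof is correct and follows essentially the same approach as the paper's: strong induction on $k$, locating the last colourable window strictly before $k$ (your $p$, the paper's renamed $k$), applying Theorem~\ref{thrm:psilllpsirrr} for the sign flip, invoking the induction hypothesis at that earlier index, and performing the same parity bookkeeping $k - z = p - z' + 1$. You are in fact slightly more explicit than the paper in justifying the shift needed to invoke Theorem~\ref{thrm:psilllpsirrr} and in explaining why ``$\ne$'' becomes ``$=-$'' for values in $\crl{-1,1}$.
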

\begin{proof}
    Strong induction on $k$. For $k = 0$ the statement is trivial. Now assume the statement holds whenever $k < K$ for a certain $K \in \N$. Set 
        \begin{align*}
            Z =& \spacemath{\#}{\max{}} \crl{
                \j < K;
                \slice w \j {\j+n} 
                \text{ is not colourable}
            }, 
        \\
            k =& \max \crl{
                \j < K; 
                \slice w \j {\j + n} 
                \text{ is colourable}
            }, 
        \\
            z =& \spacemath{\#}{\max{}} \crl{
                \j < k;
                \slice w \j {\j+n} 
                \text{ is not colourable}
            }.
        \end{align*}
        Note that $Z - z = K - k - 1$, so 
        \begin{align}
            k - z = K - Z - 1
            \label{eq:proof:JZ}.
        \end{align} 
        Hence 
        \begin{align*}
            \psi  \p{\slice w K {K+n}} 
            \by{\eqref{eq:psilllpsirrr}}
            &-\psi \p{\slice w k {k+n}}
        \\
            \by{\eqref{eq:psilllpsirrr}}
            &-\alter{k-z} \cdot 
            \psi \p{\slice w 0 n}
        \\
            \by{\eqref{eq:proof:JZ}}
            &\alter{K-Z} \cdot 
            \psi \p{\slice w 0 n}.
        \end{align*}
\end{proof}

\begin{corll}
    Take $k, n, w, z$ as in corollary~\ref{corll:psialtersz}. 
    
    Let $\psi \p{\slice w 0 n} = \psi \p{\slice w k {k+n}}$. Then 
    \begin{align*}
        k \iseven \iff 
        z \iseven[.]
    \end{align*}
\end{corll}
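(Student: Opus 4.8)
The plan is to read this statement off directly from corollary~\ref{corll:psialtersz}, of which it is essentially a parity reformulation. The one preliminary remark I would make is that since $\slice w 0 n$ is colourable, $\psi \p{\slice w 0 n} \ne 0$ by definition, so in fact $\psi \p{\slice w 0 n} \in \crl{-1, 1}$ (the same holds for $\psi \p{\slice w k {k+n}}$, but only the former is needed).

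Next I would combine the hypothesis $\psi \p{\slice w 0 n} = \psi \p{\slice w k {k+n}}$ with equation \eqref{eq:psialtersz}, which together give
\begin{align*}
    \psi \p{\slice w 0 n} = \psi \p{\slice w k {k+n}} = \alter{k-z} \cdot \psi \p{\slice w 0 n}.
\end{align*}
Dividing through by the nonzero value $\psi \p{\slice w 0 n}$ leaves $\alter{k-z} = 1$, hence $k - z$ is even.

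Finally, $k - z$ being even is the same as $k$ and $z$ having equal parity, i.e. $k \iseven \iff z \iseven$, which is the claim. I do not expect any genuine obstacle here: the only point worth stating explicitly is that $\psi$ takes values in $\crl{-1, 1}$ on colourable strings, which is what makes the cancellation legitimate; everything else is arithmetic modulo $2$.
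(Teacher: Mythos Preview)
Your proof is correct and follows exactly the same route as the paper, which simply records that corollary~\ref{corll:psialtersz} gives $\alter{k-z} = 1$. You have spelled out the cancellation step (noting $\psi \p{\slice w 0 n} \ne 0$) more explicitly than the paper does, but the argument is identical.
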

\begin{proof}
    By corollary~\ref{corll:psialtersz} $\alter{k-z} = 1$. 
\end{proof}

\begin{corll} \label{corll:psialters}
    Let $n \in \N$ and $w$ be a point in an $n$"~colourable shift. Then 
    \begin{align}
        \forall k \in \N \quad 
        \psi \p{w_{[k, k+n)}} = 
        \alter k \cdot \psi \p{w_{[0, n)}}
        \label{eq:psialters}.
    \end{align}
\end{corll}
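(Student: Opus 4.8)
The plan is to obtain this as the immediate special case $z = 0$ of Corollary~\ref{corll:psialtersz}. First I would unwind the definitions. Since $w$ is a point in an $n$"~colourable shift, it lies in some subshift of the maximal $n$"~colourable shift, hence in that maximal shift itself. By definition the maximal $n$"~colourable shift is the \sft{} generated by prohibiting exactly the strings in $\crl{v \in \abc^n; \psi \p v = 0}$, so none of those strings occurs in $w$. Consequently $\slice w \j {\j+n}$ is colourable for every $\j \in \N$.

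In particular $\slice w 0 n$ is colourable and, for an arbitrary fixed $k \in \N$, so is $\slice w k {k+n}$; thus the hypotheses of Corollary~\ref{corll:psialtersz} are satisfied for this $w$, this $k$ and this $n$. The count $z = \#\crl{\j < k; \slice w \j {\j+n} \text{ is not colourable}}$ occurring in that corollary equals $0$, again because every length"~$n$ window of $w$ is colourable. Substituting $z = 0$ into \eqref{eq:psialtersz} yields $\psi \p{\slice w k {k+n}} = \alter{k} \cdot \psi \p{\slice w 0 n}$, which is precisely \eqref{eq:psialters} once we note $\slice w k {k+n} = w_{[k, k+n)}$ and $\slice w 0 n = w_{[0, n)}$.

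There is essentially no real obstacle here: the only point that needs any care is the passage from ``$w$ is a point in an $n$"~colourable shift'' to ``every length"~$n$ subword of $w$ is colourable'', which is a matter of reading off the definition of the maximal $n$"~colourable shift (and using that $\psi \p v = 0 \iff v$ is not colourable). After that, the claim is Corollary~\ref{corll:psialtersz} with the parameter $z$ vanishing, and $k$ arbitrary.
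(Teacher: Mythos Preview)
Your proposal is correct and follows essentially the same route as the paper's own proof, which simply notes that for a point in an $n$"~colourable shift one has $z = 0$ and then invokes Corollary~\ref{corll:psialtersz}. Your additional unpacking of why $z = 0$ (via the definition of the maximal $n$"~colourable shift) is a welcome elaboration but not a different argument.
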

\begin{proof}
    The statement follows directly from corollary~\ref{corll:psialtersz} by noting that for an $n$"~colourable shift, $z = 0$. 
\end{proof}

\begin{remark}
    Corollary~\ref{corll:psialters} motivates the term \emph{alternating colouring}. 
\end{remark}

\begin{figure}
	\begin{huge}
	    \begin{align*}
	            \bm{
	                \blue 0
	                \red  0
	                \blue 0
	                \red  1
	                \blue 0
	                \red  1
	                \blue 1
	                \red  1
	                \blue 0
	                \red  0
	                \blue 0
	                \red  1
	                \blue 1
	                \red  1
	                \blue 0
	                \red  1
	                \blue .
	                \red  .
	                \blue .
	            }
	    \end{align*}
	\end{huge}
    \caption{The beginning of a point $w$ in a $4$-colourable shift. For $k \in \N$ the character on place $k$ has been coloured blue if $\psi \p{\slice w k {k+4}} = 1$ and red if $\psi \p{\slice w k {k+4}} = -1$. The sequence alternates between blue and red, a visualisation of the alternating colouring.}
    \label{fig:shiftpointcol}
\hrulefill\end{figure}

\begin{corll} \label{corll:closedeven}
    Let $k, n \in \N$ and $w$ be a point in an $n$"~colourable shift such that $\slice w 0 n = \slice w k {k+n}$. Then $k$ is even. 
\end{corll}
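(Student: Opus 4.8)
The plan is to read this off directly from Corollary~\ref{corll:psialters}. Since $w$ is a point in an $n$"~colourable shift, by definition every window $\slice w \j {\j+n}$ is colourable, so in particular $\psi \p{\slice w 0 n} \ne 0$ (using Lemma~\ref{lemma:psi0xi0} to translate colourability of $\slice w 0 n$ into $\psi \p{\slice w 0 n} \ne 0$). Corollary~\ref{corll:psialters}, applied with the given value of $k$, yields
\begin{align*}
    \psi \p{\slice w k {k+n}} = \alter k \cdot \psi \p{\slice w 0 n}.
\end{align*}

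The hypothesis $\slice w 0 n = \slice w k {k+n}$ says these two substrings are literally the same string, so $\psi$ takes the same value on both. Substituting this into the displayed identity and cancelling the non-zero factor $\psi \p{\slice w 0 n}$ leaves $\alter k = 1$, which is precisely the assertion that $k$ is even.

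I expect no genuine obstacle: the statement is essentially Corollary~\ref{corll:psialters} specialised to the case where the two windows coincide, and the only step worth making explicit is the non-vanishing of $\psi \p{\slice w 0 n}$, which is immediate from the definition of an $n$"~colourable shift. One could instead bypass Corollary~\ref{corll:psialters} and argue straight from Corollary~\ref{corll:psialtersz} with $z = 0$, or re-derive the needed identity from Theorem~\ref{thrm:psilllpsirrr} by induction on the number of windows strictly between position $0$ and position $k$; but invoking the already-packaged Corollary~\ref{corll:psialters} is the shortest route and is the one I would take.
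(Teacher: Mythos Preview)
Your argument is correct and matches the paper's own proof, which likewise just invokes Corollary~\ref{corll:psialters} to obtain $\alter k = 1$. You have merely made explicit the non-vanishing of $\psi \p{\slice w 0 n}$ that the paper leaves implicit.
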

\begin{proof}
    By corollary~\ref{corll:psialters} $\alter k = 1$. 
\end{proof}

\begin{remark}
    Corollary~\ref{corll:closedeven} states that any closed walk among the colourable edges of a de Bruijn graph has even length. 
\end{remark}

\begin{corll}
    Let $n \in \N$. A non-empty, $n$"~colourable shift is not mixing. 
\end{corll}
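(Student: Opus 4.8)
The plan is to reduce the claim to Corollary~\ref{corll:closedeven}: in a point of an $n$-colourable shift every return time of a block of length $n$ is even, whereas topological mixing would force return times of both parities.

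Concretely, suppose $X$ is a non-empty $n$-colourable shift and assume, towards a contradiction, that $X$ is topologically mixing. Using non-emptiness I would fix a point $w \in X$ and set $u = \slice w 0 n \in \abc^n$; then the cylinder $[u] = \crl{v \in X; \slice v 0 n = u}$ is a non-empty open subset of $X$. Applying the definition of mixing to the pair $U = V = [u]$ produces an $N \in \N$ such that $[u] \cap \sigma^{-m}[u] \ne \emptyset$ for all $m \ge N$. For any such $m$, picking a point $v \in X$ in this intersection gives $\slice v 0 n = u = \slice v m {m+n}$, so Corollary~\ref{corll:closedeven} (applied with $k = m$) shows that $m$ is even. (Equivalently, one could run the argument through the word-language characterisation of mixing: eventually every gap between two occurrences of $u$ is realised inside some point of $X$.)

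Since the conclusion ``$m$ is even'' then holds for every $m \ge N$, in particular for some odd $m \ge N$, we reach a contradiction, and hence $X$ is not mixing. I expect no genuine obstacle here: the mathematical substance is entirely contained in Corollary~\ref{corll:closedeven} -- and behind it Corollary~\ref{corll:psialters} and Theorem~\ref{thrm:psilllpsirrr} -- so the only care needed is the bookkeeping of unwinding topological mixing into a statement about recurrences of the fixed block $u$, together with the observation that the non-emptiness hypothesis is exactly what makes a non-empty cylinder $[u]$ available.
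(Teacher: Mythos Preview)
Your argument is correct and follows the same route as the paper: reduce to Corollary~\ref{corll:closedeven} and observe that mixing would force arbitrarily large (in particular odd) return times of a fixed block, contradicting that corollary. The paper's proof is a one-line version of exactly this (``for any natural number there is a larger odd number, but by corollary~\ref{corll:closedeven} there is no walk of odd length from a word back to itself''); you have simply unpacked the mixing hypothesis explicitly via cylinder sets.
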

\begin{proof}
    For any natural number there is a larger odd number, but by corollary~\ref{corll:closedeven} there is no walk of odd length from a word back to itself. 
\end{proof}

\section{Counting the non-colourable strings}
\label{section:Numbernoncolourable}

\begin{observ} \label{observ:coliffpalin}
    A string of length $\le 5$ is colourable if and only if it is not a palindrome. 
\end{observ}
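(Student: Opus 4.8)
The plan is to strip $\psi$ out of the statement, reduce it to a property of $\xi$, and then settle that by a short finite check organised by the symmetries $R$ and $C$. By Lemma~\ref{lemma:psi0xi0} a string $w$ is colourable precisely when $\xi\p w \ne 0$, so the claim is equivalent to: for $\# w \le 5$, $\xi\p w = 0 \iff w$ is a palindrome. The implication ``palindrome $\implies \xi\p w = 0$'' is exactly Corollary~\ref{corll:xipalindrome} and needs no restriction on the length, so the real content is the converse — every non"~palindrome of length at most $5$ is colourable. I would flag at this point that the length bound is essential: at length $6$ the string $\bm{001101}$ is a non"~palindrome with $\xi = 0$, so no argument valid for all lengths can exist.

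\emph{The finite check.} For $\# w \le 3$ everything is read off Table~\ref{tab:xi}: there $\xi\p w = 0$ only for $\upepsilon, \zero, \one, \bm{00}, \bm{11}, \bm{000}, \bm{010}, \bm{101}, \bm{111}$, all of which are palindromes. Now take $\# w \in \crl{4,5}$ with $\xi\p w = 0$. By Lemma~\ref{lemma:xi0xim0} one has $\xi\p{m\p w} = 0$, and since $m\p w$ has length $2$ or $3$, the case just treated forces $m\p w$ to be a palindrome; write $w = a\,m\p w\,b$ with $a,b \in \abc$. If $m\p w \in \crl{\zero^k, \one^k}$ — which covers every $\# w = 4$, as then $m\p w \in \crl{\bm{00},\bm{11}}$ — then, passing to $C\p w$ and using $\xi \circ C = -\xi$ if necessary, I may assume $m\p w = \zero^k$: if $a = b$ then $w$ is a palindrome, and otherwise $w \in \crl{\zero^{k+1}\one,\, \one\zero^{k+1}}$, so $\xi\p w = \pm 1$ by Lemma~\ref{lemma:xi0i1j} and Corollary~\ref{corll:xi1j0i}, contradicting $\xi\p w = 0$. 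The only remaining case is $\# w = 5$ with $m\p w \in \crl{\bm{010},\bm{101}}$; this produces eight strings, four of which are palindromes and the other four are $\bm{00101}, \bm{10100}, \bm{01011}, \bm{11010}$. These four form a single orbit under $R$ and $C$ (e.g.\ $R\p{\bm{00101}} = \bm{10100}$ and $C\p{\bm{00101}} = \bm{11010}$), so by \eqref{eq:xiCxiR} it is enough to evaluate $\xi\p{\bm{00101}}$: Table~\ref{tab:xi} gives $\xi\p{\bm{0010}} = \sgn\p{\xi\p{\bm{001}} + \xi\p{\bm{010}}} = \sgn\p{1+0} = 1$, while $\bm{0101} = T^4 \in \crl{T}^{\even}$ gives $\xi\p{\bm{0101}} = 1$ straight from the definition, and $\bm{00101}$ has odd length so $\xi\p{\bm{00101}} = \sgn\p{\xi\p{\bm{0010}} + \xi\p{\bm{0101}}} = 1 \ne 0$. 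Hence all four non"~palindromes are colourable, the assumption $\xi\p w = 0$ fails, and $w$ must be a palindrome.

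\emph{The main obstacle.} There is no deep difficulty, only a mild structural one worth isolating: the four length"~$5$ strings $\bm{00101}, \bm{10100}, \bm{01011}, \bm{11010}$ are genuinely not reachable by the structural results — Lemma~\ref{lemma:ximw0w} leaves open the pattern $\xi\p{l\p w} = -\xi\p{r\p w} = \pm 1$, and no lemma can deliver ``non"~palindrome $\implies$ colourable'' outright since that is false from length $6$ on — so one is forced to compute $\xi$ on a representative by hand. The $R$/$C$ symmetry is what keeps this to a single explicit evaluation rather than four; everything else is bookkeeping over a handful of short strings.
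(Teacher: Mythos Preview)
Your argument is correct: the reduction to $\xi$ via Lemma~\ref{lemma:psi0xi0}, the use of Corollary~\ref{corll:xipalindrome} for one direction, and the finite check for the converse organised through Lemma~\ref{lemma:xi0xim0} and the $R$/$C$ symmetries all go through, including the single explicit computation $\xi\p{\bm{00101}} = 1$. The paper itself offers no proof here --- the statement is recorded as an observation, i.e.\ a direct finite verification left to the reader --- so your write-up is a careful, well-structured version of exactly that check.
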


\begin{lemma}
    Let $n \ge 4$. Then 
    \begin{align}
        \#\crl{
            w \in \abc^n; \cp \xi m w = 0 \land \xi \p w \ne 0
        } = 
        6 - 2 \cdot \alter n
        \label{eq:hashxm0xn0}
    \end{align}
\end{lemma}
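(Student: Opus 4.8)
The plan is to read the set off from Lemma~\ref{lemma:ximw0w}. Write $W = \crl{w \in \abc^n; \xi \p{m \p w} = 0}$. Under the hypothesis defining $W$, that lemma partitions $W$ according to the value of the pair $\p{\xi \p{l \p w}, \xi \p{r \p w}}$, so the only work left is to decide, block by block, whether the members also satisfy $\xi \p w \ne 0$, and then to count.

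The first reduction is this: for $w \in W$ one automatically has $w \notin \crl T ^\even \cup \crl{CT}^\even$, because $m$ sends an alternating string of even length $n \ge 4$ to an alternating string of even length $n - 2 \ge 2$, on which $\xi$ equals $\pm 1$ -- contradicting $\xi \p{m \p w} = 0$. Hence Observation~\ref{observ:xi0} yields, for every $w \in W$,
\[
    \xi \p w = 0 \iff \xi \p{l \p w} = -\xi \p{r \p w}.
\]
Consequently the set in question is the union of those blocks of $W$ whose index pair $\p{a,b}$ satisfies $a \ne -b$, namely $\p{1,1}$, $\p{-1,-1}$, $\p{0,1}$, $\p{0,-1}$, $\p{1,0}$, $\p{-1,0}$; the blocks for $\p{0,0}$, $\p{1,-1}$, $\p{-1,1}$ consist entirely of strings with $\xi \p w = 0$ and contribute nothing.

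By equations \eqref{eq:ximw0w0Tn1Tn11}--\eqref{eq:ximw0w10n1} of Lemma~\ref{lemma:ximw0w} the six relevant blocks are $\crl{\zero^{n-1}\one}$, $\crl{\one^{n-1}\zero}$, $\crl{\zero\one^{n-1}}$, $\crl{\one\zero^{n-1}}$ for every $n$, together with $\crl{\zero T^{n-1}, T^{n-1}\one}$ and $\crl{CT^{n-1}\zero, \one CT^{n-1}}$ when $n$ is odd (and empty when $n$ is even). A quick comparison of the first, last and one interior letter shows that for $n \ge 4$ these eight strings (four when $n$ is even) are pairwise distinct. Hence the cardinality is $4$ for $n$ even and $8$ for $n$ odd, i.e. $6 - 2\alter n$.

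The step I would be most careful about is the first reduction. Lemma~\ref{lemma:ximw0w} simply does not list the value pairs $\p{1,-1}$ and $\p{-1,1}$, and the corresponding blocks of $W$ are genuinely non-empty -- for instance $w = T^n$ with $n$ odd has $\xi \p{l \p w} = 1$, $\xi \p{r \p w} = -1$, $\xi \p{m \p w} = 0$ -- so it is not enough to quote Lemma~\ref{lemma:ximw0w}; one must verify that every string in those two blocks has $\xi \p w = 0$, which is precisely what the displayed equivalence (together with the exclusion of $\crl T ^\even \cup \crl{CT}^\even$) provides.
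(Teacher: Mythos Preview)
Your proof is correct and follows essentially the same route as the paper's: both set $W=\crl{w\in\abc^n;\cp\xi m w=0}$, reduce via Observation~\ref{observ:xi0} to the condition $\cp\xi l w\ne-\cp\xi r w$, and then read the contributing strings off from Lemma~\ref{lemma:ximw0w}. You are more explicit than the paper on two points it leaves implicit --- the exclusion of $\crl T^\even\cup\crl{CT}^\even$ from $W$ (needed to invoke Observation~\ref{observ:xi0}) and the pairwise distinctness of the listed strings for $n\ge 4$ --- which makes your write-up slightly more self-contained, but the underlying argument is the same.
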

\begin{proof}
    The statement can be seen as a corollary to lemma~\ref{lemma:ximw0w}. Set $$W = \crl{w \in \abc^n; \cp \xi m w = 0}.$$ Then 
    \begin{align*}
        &
        \#\crl{
            w \in \abc^n; 
            \cp \xi m w = 0 \land 
            \xi \p w \ne 0
        }
    \\
        \by[=]{\phantom{\eqref{eq:ximw0w0n11}}}&
        \# \crl{w \in W; \xi \p w \ne 0} 
    \\
        \by[=]{\phantom{\eqref{eq:ximw0w0n11}}}&
        \# \crl{
            w \in W; 
            \cp \xi l w = 0 \land 
            \cp \xi r w \ne 0
        } 
    \\ 
        {}+{} &
        \# \crl{
            w \in W; 
            \cp \xi l w \ne 0 \land 
            \cp \xi r w = 0
        } 
    \\    
        {}+{} & 
        \# \crl{
            w \in W; 
            \cp \xi l w = 
            \cp \xi r w \ne 
            0
        }
    \\ 
        \by[=]{\eqref{eq:ximw0w0n11}} &2 
        \by[+]{\eqref{eq:ximw0w01n1}} 2 
        \by[+]{
            \eqref{eq:ximw0w0Tn1Tn11},
            \eqref{eq:ximw0wCTn101CTn1}
        } 2 \cdot \p{1 - \alter n} 
    \\ 
        \by[=]{\phantom{\eqref{eq:ximw0w0n11}}}
        &6 - 2 \cdot \alter n.
    \end{align*}
\end{proof}

\begin{corll} \label{corll:KnKn2}
    For $n \in \N$ set $K_n = \# \crl{w \in \abc^n; \psi \p w = 0}$. Let $n \ge 4$. Then 
    \begin{align}
        K_n = 
        4 \cdot K_{n-2} - 6 + 2 \cdot \alter n.
        \label{eq:KnKn2}
    \end{align}
\end{corll}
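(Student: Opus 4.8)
The plan is to count the strings annihilated by $\xi$ rather than the non-colourable ones, and then to exploit that passing to the proper infix $m$ is a $4$-to-$1$ operation.

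First I would rewrite, using lemma~\ref{lemma:psi0xi0},
\begin{align*}
    K_n = \#\crl{w \in \abc^n; \xi \p w = 0}.
\end{align*}
Next I would note that $m : \abc^n \to \abc^{n-2}$ is exactly $4$-to-$1$: the preimages of a given $v \in \abc^{n-2}$ are the four strings $a v b$ with $a, b \in \abc$. Hence
\begin{align*}
    \#\crl{w \in \abc^n; \cp \xi m w = 0} = 4 \cdot K_{n-2}.
\end{align*}
The structural input is lemma~\ref{lemma:xi0xim0}, which gives $\xi \p w = 0 \implies \cp \xi m w = 0$, i.e. $\crl{w \in \abc^n; \xi \p w = 0} \subseteq \crl{w \in \abc^n; \cp \xi m w = 0}$. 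Thus the latter set is the disjoint union of $\crl{w \in \abc^n; \xi \p w = 0}$, of cardinality $K_n$, and $\crl{w \in \abc^n; \cp \xi m w = 0 \land \xi \p w \ne 0}$, of cardinality $6 - 2 \cdot \alter n$ by equation~\eqref{eq:hashxm0xn0}. Combining the three displays yields $4 \cdot K_{n-2} = K_n + 6 - 2 \cdot \alter n$, which is the claimed recurrence after rearranging.

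I do not expect a genuine obstacle here: the two non-trivial ingredients, lemma~\ref{lemma:xi0xim0} and equation~\eqref{eq:hashxm0xn0}, are already available, so what remains is a one-line counting argument. The only points to check carefully are that $m$ really is $4$-to-$1$ on $\abc^n$ (so that the number of infix-annihilated strings is exactly $4 K_{n-2}$ and not merely bounded by it) and that the inclusion supplied by lemma~\ref{lemma:xi0xim0} runs in the direction used above, so that the set partition — and hence the subtraction — is legitimate.
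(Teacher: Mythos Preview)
Your argument is correct and matches the paper's proof essentially step for step: rewrite $K_n$ via lemma~\ref{lemma:psi0xi0}, use lemma~\ref{lemma:xi0xim0} to reduce to the set $\crl{w; \cp \xi m w = 0}$, count that set as $4 K_{n-2}$ via the $4$-to-$1$ property of $m$, and subtract the excess $6 - 2 \cdot \alter n$ from equation~\eqref{eq:hashxm0xn0}. If anything, you are more explicit than the paper about why the $4$-to-$1$ count and the set partition are valid.
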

\begin{proof}
    \begin{align*}
        K_n 
        \by{\eqref{eq:psi0xi0}}
        \# \crl{w \in \abc^n; \xi \p w = 0}
        \by{\eqref{eq:xi0xim0}}& 
        \# \crl{  
            w \in \abc^n; 
            \cp \xi m w = 0
        }
    \\*
        {}-{}&
        \# \crl{
            w \in \abc^n; 
            \cp \xi m w = 0 \land 
            \xi \p w \ne 0
        }
    \\ 
        \by{\eqref{eq:hashxm0xn0}}&
        4 \cdot K_{n-2} - 6 + 2 \cdot \alter n.
    \end{align*}    
\end{proof}

\begin{defn}
    For $n \in \N$ the \empar{Jacobsthal number} $J_n$ is defined as 
    \begin{align}
        J_n = \frac{2^n - \alter n}{3}
        \label{eq:Jn}.
    \end{align}
\end{defn}

\begin{table}
    \centering
    \begin{tabular}{c|rrrrrrrrrrrr}
        $n$ & 0 & 1 & 2 & 3 & 4 & 5 & 6 & 7 & 8 & 9 & 10 & 11 
    \\ \hline
        $J_n$ & 0 & 1 & 1 & 3 & 5 & 11 & 21 & 43 & 85 & 171 & 341 & 683
    \end{tabular}
    \caption{The first Jacobsthal numbers}
    \label{tab:J}
\hrulefill\end{table}

\begin{fact} \label{fact:Jn12}
    Let $n \in \N$. 
    \begin{align}
        J_{n+1} &= 
        2 \cdot J_{n} + \alter {n} 
        \label{eq:Jn1}
    \\       
        J_{n+2} &=  
        J_{n+1} + 2 \cdot J_n
        \label{eq:Jn2}
    \end{align}
\end{fact}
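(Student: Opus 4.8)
The plan is to verify both identities by direct substitution of the closed form \eqref{eq:Jn}, relying on nothing more than the elementary facts that $\alter{n+1} = -\alter{n}$ (equivalently $\alter{n} + \alter{n+1} = 0$, since $\alter{n} = \p{-1}^n$), that $\alter{n+2} = \alter{n}$, and that $2^{n+1} = 2 \cdot 2^n$. Since $\crl{2^n}$ and $\crl{\alter n}$ are integer sequences whose difference is divisible by $3$, the quotients are well defined, so the computations below are purely formal manipulations of these expressions.

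For \eqref{eq:Jn1} I would start from the right-hand side: $2 \cdot J_n + \alter n = \tfrac{2\p{2^n - \alter n}}{3} + \alter n$, put everything over the common denominator $3$ to obtain $\tfrac{2^{n+1} - 2\alter n + 3\alter n}{3} = \tfrac{2^{n+1} + \alter n}{3}$, and finally substitute $\alter n = -\alter{n+1}$ to recognise this as $\tfrac{2^{n+1} - \alter{n+1}}{3} = J_{n+1}$.

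For \eqref{eq:Jn2} there are two equally short routes. The direct one substitutes the closed form for all three terms: $J_{n+1} + 2 J_n = \tfrac{2^{n+1} - \alter{n+1}}{3} + \tfrac{2\p{2^n - \alter n}}{3}$, which simplifies (using $\alter{n+1} = -\alter n$) to $\tfrac{2^{n+2} - \alter n}{3}$, and then $\alter n = \alter{n+2}$ identifies this as $J_{n+2}$. Alternatively, one can deduce \eqref{eq:Jn2} from \eqref{eq:Jn1}: applying \eqref{eq:Jn1} at index $n+1$ gives $J_{n+2} = 2 J_{n+1} + \alter{n+1}$, while \eqref{eq:Jn1} at index $n$ rearranges to $\alter n = J_{n+1} - 2 J_n$; adding the two sign terms kills them because $\alter n + \alter{n+1} = 0$, leaving $J_{n+2} = J_{n+1} + 2 J_n$.

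There is no genuine obstacle: the only point requiring a little care is the bookkeeping of the alternating sign $\alter{\cdot}$ under the shift $n \mapsto n+1$, and once $\alter{n+1} = -\alter n$ is invoked each part collapses to a one-line computation. I would present the proof as the two short displayed calculations above, possibly merging both into a single \texttt{align*} block.
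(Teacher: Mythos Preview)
Your verification is correct: both identities follow by a one-line substitution of the closed form \eqref{eq:Jn}, and your bookkeeping with $\alter{n+1} = -\alter{n}$ and $\alter{n+2} = \alter{n}$ is accurate.

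Note, however, that the paper does not actually prove this statement. It is labelled a \emph{Fact}, and the remark immediately following it says that the definition, the table values, and Fact~\ref{fact:Jn12} are taken from \cite{wiki:jacobsthal}. So there is no proof in the paper to compare against; your direct computation is exactly the natural way to supply one, given that the paper already adopts the closed form \eqref{eq:Jn} as its definition of $J_n$.
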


\begin{remark}
    The definition of the Jacobsthal numbers, the values in table~\ref{tab:J} and fact~\ref{fact:Jn12} have been taken from \cite{wiki:jacobsthal}.
\end{remark}

\begin{thrm} \label{thrm:Kn2}
    Let $n \in \N$. 
    \begin{align}
        K_{n+2} = 2 \cdot \p{J_n + 1}. 
        \label{eq:Kn2}
    \end{align}
\end{thrm}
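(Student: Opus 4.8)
The plan is to prove the identity by induction on $n$, feeding off the recurrence $K_n = 4 K_{n-2} - 6 + 2 \alter n$ of corollary~\ref{corll:KnKn2}, which is available only for $n \ge 4$. Because that recurrence relates $K_n$ to $K_{n-2}$, it splits the argument by parity, so I would set up two base cases, one even and one odd. Reading off table~\ref{tab:psi} (equivalently, unwinding the definition of $\psi$ directly), the length"~$2$ strings with $\psi = 0$ are exactly $\bm{00}$ and $\bm{11}$, so $K_2 = 2 = 2 \cdot \p{J_0 + 1}$, and the length"~$3$ strings with $\psi = 0$ are $\bm{000}, \bm{010}, \bm{101}, \bm{111}$, so $K_3 = 4 = 2 \cdot \p{J_1 + 1}$. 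Thus the claim holds for $n = 0$ and $n = 1$.

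For the inductive step, fix $n \ge 2$ and assume $K_n = 2 \p{J_{n-2} + 1}$. Since $n + 2 \ge 4$, the recurrence applies, and with $\alter{n+2} = \alter n$ one gets
\begin{align*}
    K_{n+2} = 4 K_n - 6 + 2 \alter{n+2} = 8 J_{n-2} + 2 + 2 \alter n.
\end{align*}
It then suffices to show $8 J_{n-2} + 2 + 2 \alter n = 2 \p{J_n + 1}$, i.e.\ that
\begin{align*}
    J_n = 4 J_{n-2} + \alter n.
\end{align*}
This drops out of fact~\ref{fact:Jn12}: substituting \eqref{eq:Jn1} in the form $J_{n-1} = 2 J_{n-2} + \alter{n-2} = 2 J_{n-2} + \alter n$ into \eqref{eq:Jn2} in the form $J_n = J_{n-1} + 2 J_{n-2}$ gives $J_n = 4 J_{n-2} + \alter n$ immediately. (Alternatively, straight from the closed form \eqref{eq:Jn}: $4 J_{n-2} + \alter n = \tfrac{2^n - 4 \alter n}{3} + \alter n = \tfrac{2^n - \alter n}{3} = J_n$.) This closes the induction.

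The computations are essentially mechanical; there is no real obstacle beyond two points of hygiene. First, the recurrence of corollary~\ref{corll:KnKn2} genuinely fails for $n < 4$, so the base values $K_2$ and $K_3$ must be obtained by direct inspection rather than from the recurrence. Second, the parity bookkeeping ($\alter{n+2} = \alter n$ and $\alter{n-2} = \alter n$) has to be carried through consistently. The mathematical substance of the theorem is already packaged in corollary~\ref{corll:KnKn2}; recognising the Jacobsthal identity $J_n = 4 J_{n-2} + \alter n$ is exactly what identifies $2 \p{J_n + 1}$ as the solution of that recurrence.
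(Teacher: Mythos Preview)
Your proof is correct and takes essentially the same approach as the paper: induction on $n$ with base cases $n \in \crl{0,1}$, the recurrence of corollary~\ref{corll:KnKn2} for the step, and the Jacobsthal identities of fact~\ref{fact:Jn12} to finish. The only cosmetic difference is an index shift (you go from $n-2$ to $n$, the paper from $n$ to $n+2$) and that you package the two identities into $J_n = 4 J_{n-2} + \alter n$ before applying them, whereas the paper applies \eqref{eq:Jn1} and \eqref{eq:Jn2} in sequence.
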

\begin{proof}
    By induction on $n$. For $n \le 1$ there are only two cases to consider. Now assume the statement holds for a certain $n$. 
    \begin{align*}
        K_{n+4}
        \by{\eqref{eq:KnKn2}}&
        4 \cdot K_{n+2} - 6 + 
        2 \cdot \alter n
    \\ 
        \by{\eqref{eq:Kn2}}&
        8 \cdot \p{J_n + 1} - 6 + 
        2 \cdot \alter n
    \\ 
        \by{\phantom{\eqref{eq:Kn2}}}&
        2 \cdot \p{
            2 \cdot J_n + \alter n
        } + 
        4 \cdot J_n + 2
    \\  
        \by{\eqref{eq:Jn1}}&
        2 \cdot J_{n+1} + 
        4 \cdot J_n + 2
    \\  
        \by{\eqref{eq:Jn2}}&
        2 \cdot \p{J_{n+2} + 1}
    \end{align*}
\end{proof}

\begin{table}
    \centering
    \begin{tabular}{c|rrrrrrrrrrrr}
        $n$ & 0 & 1 & 2 & 3 & 4 & 5 & 6 & 7 & 8 & 9 & 10 & 11 
    \\ \hline
        $K_n$ & 1 & 2 & 2 & 4 & 4 & 8 & 12 & 24 & 44 & 88 & 172 & 344
    \end{tabular}
    \caption{The number $K_n$ of non-colourable strings in $\abc^n$}
    \label{tab:K}
\hrulefill\end{table}

\begin{corll} \label{corll:Kn}
    Let $n \ge 2$. Then 
    \begin{align}
        n \iseven \implies&
        K_n = \frac{2^n + \spacemath{8}{16}}{6}
        \label{eq:Kneven}
    \\ 
        n \isodd  \implies&
        K_n = \frac{2^n + 16}{6}
        \label{eq:Knodd}.
    \end{align}
\end{corll}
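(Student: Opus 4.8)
The plan is to read off both formulas directly from Theorem~\ref{thrm:Kn2} by unfolding the closed form of the Jacobsthal numbers. Theorem~\ref{thrm:Kn2} gives $K_{n+2} = 2 \cdot \p{J_n + 1}$ for all $n \in \N$, while equation~\eqref{eq:Jn} gives $J_n = \frac{2^n - \alter n}{3}$. Substituting the latter into the former yields, for every $n \in \N$,
\[
    K_{n+2} = 2 \cdot \p{\frac{2^n - \alter n}{3} + 1} = \frac{2^{n+1} - 2 \cdot \alter n + 6}{3}.
\]

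Next I would re-index. Setting $m = n + 2$, so that $m$ ranges over the integers $\ge 2$ — exactly the range appearing in the corollary — one has $n = m - 2$, hence $2^{n+1} = 2^{m-1}$ and $\alter n = \alter{m-2} = \alter m$. Multiplying numerator and denominator by $2$ then gives
\[
    K_m = \frac{2^{m-1} - 2 \cdot \alter m + 6}{3} = \frac{2^m - 4 \cdot \alter m + 12}{6}.
\]
It only remains to substitute the two possible values of $\alter m$: when $m$ is even, $\alter m = 1$ and $K_m = \frac{2^m + 8}{6}$, which is \eqref{eq:Kneven}; when $m$ is odd, $\alter m = -1$ and $K_m = \frac{2^m + 16}{6}$, which is \eqref{eq:Knodd}. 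Renaming $m$ back to $n$ finishes the argument.

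I expect no genuine obstacle: granted Theorem~\ref{thrm:Kn2}, the corollary is pure algebra. The single point that wants a little care is the index shift — because Theorem~\ref{thrm:Kn2} is phrased in terms of $K_{n+2}$ rather than $K_n$, one must track that $\alter n$ is \emph{unchanged} (not negated) under the substitution $n \mapsto m - 2$, and check that the constraint $n \ge 0$ there corresponds precisely to $n \ge 2$ in the corollary. A one-line sanity check against table~\ref{tab:K} (e.g. $K_4 = \tfrac{24}{6} = 4$, $K_5 = \tfrac{48}{6} = 8$) confirms the constants.
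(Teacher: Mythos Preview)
Your proof is correct and follows essentially the same route as the paper: both substitute the closed form \eqref{eq:Jn} of the Jacobsthal numbers into Theorem~\ref{thrm:Kn2} and simplify to $\frac{2^n + 12 - 4\alter n}{6}$ before splitting by parity. The only cosmetic difference is that the paper applies the theorem directly in the shifted form $K_n = 2\p{J_{n-2}+1}$, whereas you quote it as stated and then re-index via $m = n+2$; the algebra is identical.
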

\begin{proof}
    \begin{align*}
        K_n 
        \by{\eqref{eq:Kn2}}
        2 \cdot \p{J_{n-2} + 1} 
        \by{\eqref{eq:Jn}}
        2 \cdot \p{
            \frac{2^{n-2} - \alter n}{3} + 1
        } = 
        \frac{2^n + 12 - 4 \cdot \alter n}{6}    
    \end{align*}
\end{proof}

\begin{corll} \label{corll:Kn1Kneven}
    Let $n$ be even. Then 
    \begin{align}
        K_{n+1} = 2 \cdot K_n
        \label{eq:Kn1Kneven}.
    \end{align}
\end{corll}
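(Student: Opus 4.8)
The plan is to deduce this immediately from the closed forms in Corollary~\ref{corll:Kn}. First I would note that if $n$ is even and $n \ge 2$, then $n+1$ is odd, so \eqref{eq:Kneven} gives $K_n = \frac{2^n + 8}{6}$ while \eqref{eq:Knodd} gives $K_{n+1} = \frac{2^{n+1} + 16}{6}$. A one-line computation
\begin{align*}
    2 \cdot K_n = 2 \cdot \frac{2^n + 8}{6} = \frac{2^{n+1} + 16}{6} = K_{n+1}
\end{align*}
then finishes the case $n \ge 2$. Since Corollary~\ref{corll:Kn} is only stated for $n \ge 2$, I would dispatch the remaining even value $n = 0$ by hand, reading off $K_0 = 1$ and $K_1 = 2$ from Table~\ref{tab:K}, so that $K_1 = 2 = 2 \cdot K_0$.

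An alternative route, if one prefers to stay at the level of the Jacobsthal recursion rather than the closed form, is to use Theorem~\ref{thrm:Kn2}: $K_{n+1} = 2 \cdot \p{J_{n-1} + 1}$ and $K_n = 2 \cdot \p{J_{n-2} + 1}$, so the claim $K_{n+1} = 2 K_n$ reduces to $J_{n-1} = 2 J_{n-2} + 1$. This is exactly \eqref{eq:Jn1} evaluated at index $n-2$, using that $n$ even forces $n - 2$ even, hence $\alter{n-2} = 1$. (For $n = 0$ one again checks the base case from the table, or extends the Jacobsthal convention to negative indices.)

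I expect there to be essentially no obstacle here: the only thing requiring a moment's care is bookkeeping the parities — verifying that $n$ even makes $n+1$ odd so the correct branch of Corollary~\ref{corll:Kn} applies (equivalently, that $n - 2$ is even in the Jacobsthal approach) — and remembering that Corollary~\ref{corll:Kn} carries the hypothesis $n \ge 2$, so the degenerate even value $n = 0$ must be checked directly against Table~\ref{tab:K}. Everything else is a routine substitution.
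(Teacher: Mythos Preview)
Your primary approach is exactly what the paper does: invoke \eqref{eq:Knodd} for $K_{n+1}$ and \eqref{eq:Kneven} for $K_n$, then compare. You are in fact slightly more careful than the paper, which silently relies on Corollary~\ref{corll:Kn} without singling out the case $n=0$; your explicit check against Table~\ref{tab:K} closes that tiny gap.
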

\begin{proof}
    \begin{align*}
        K_{n+1} 
        \by{\eqref{eq:Knodd}}
        \frac{2^{n+1} + 16}{6}
        = 
        2 \cdot \frac{2^n + 8}{6}
        \by{\eqref{eq:Kneven}}
        2 \cdot K_n
    \end{align*}
\end{proof}

\begin{corll} \label{corll:Kn2n}
    \begin{align*}
        \lim_{n \to \infty} 
        \frac{K_n}{2^n} = 
        \frac{1}{6}
    \end{align*}    
\end{corll}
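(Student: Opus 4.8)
The plan is to deduce this immediately from the closed form for $K_n$ already established in corollary~\ref{corll:Kn}, so no new combinatorial work is needed — only a routine limit computation. Recall that for $n \ge 2$ we have $K_n = \tfrac{2^n + 8}{6}$ when $n$ is even and $K_n = \tfrac{2^n + 16}{6}$ when $n$ is odd. Dividing by $2^n$ gives
\begin{align*}
    \frac{K_n}{2^n} = \frac{1}{6} + \frac{c_n}{6 \cdot 2^n}
    \quad \text{with } c_n \in \crl{8, 16}.
\end{align*}
Since $0 \le \tfrac{c_n}{6 \cdot 2^n} \le \tfrac{16}{6 \cdot 2^n} \to 0$ as $n \to \infty$, the claim follows by the squeeze theorem.

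Alternatively — and this avoids the case split — one can start from theorem~\ref{thrm:Kn2}, namely $K_{n+2} = 2\p{J_{n} + 1}$, substitute the definition \eqref{eq:Jn} of the Jacobsthal numbers to get $K_{n+2} = \tfrac{2^{n+1} - 2 \cdot \alter n + 6}{3}$, and then observe that
\begin{align*}
    \frac{K_{n+2}}{2^{n+2}} = \frac{1}{6} + \frac{6 - 2 \cdot \alter n}{3 \cdot 2^{n+2}} \xrightarrow[n \to \infty]{} \frac{1}{6},
\end{align*}
since the numerator of the second term stays bounded (between $4$ and $8$) while the denominator grows without bound. Re-indexing $n+2 \mapsto n$ gives the statement.

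There is essentially no obstacle here: the statement is a direct corollary of the exact formula for $K_n$, and the only thing to check is that the correction term is $O\p{2^{-n}}$, which is clear from its explicit form. I would present the short computation using corollary~\ref{corll:Kn}, as it makes the bound on the error term most transparent.
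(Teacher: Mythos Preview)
Your proposal is correct and takes essentially the same approach as the paper, which simply states that the limit follows immediately from corollary~\ref{corll:Kn}. Your write-up merely spells out the routine limit computation that the paper leaves implicit.
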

\begin{proof}
    This follows immediately from corollary ~\ref{corll:Kn}. 
\end{proof}

\begin{remark}
    Since there are $2^n$ strings of length $n$, corollary ~\ref{corll:Kn2n} tells that in the limit 1 out of 6 strings is not colourable. 
\end{remark}

\begin{corll}
    Let $n \ge 2$. Then 
    \begin{align*}
        K_{n+4} = 5 \cdot K_{n+2} - 4 \cdot K_n.
    \end{align*}
\end{corll}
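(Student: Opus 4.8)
The plan is to eliminate the inhomogeneous term $-6 + 2\alter{n}$ from the recurrence of corollary~\ref{corll:KnKn2} by applying that recurrence at two consecutive even offsets and taking a suitable linear combination. Concretely, for $n \ge 2$ both $n+4 \ge 4$ and $n+2 \ge 4$, so \eqref{eq:KnKn2} applies to give
\begin{align*}
    K_{n+4} &= 4 \cdot K_{n+2} - 6 + 2 \cdot \alter{n+4} = 4 \cdot K_{n+2} - 6 + 2 \cdot \alter n,
\\
    K_{n+2} &= 4 \cdot K_n - 6 + 2 \cdot \alter{n+2} = 4 \cdot K_n - 6 + 2 \cdot \alter n,
\end{align*}
using $\alter{n+2} = \alter{n+4} = \alter n$.

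The second identity can be rearranged to $-4 \cdot K_n = -K_{n+2} - 6 + 2 \cdot \alter n$. Substituting this into $5 \cdot K_{n+2} - 4 \cdot K_n = 4 \cdot K_{n+2} + \p{K_{n+2} - 4 \cdot K_n}$ yields
\begin{align*}
    5 \cdot K_{n+2} - 4 \cdot K_n
    = 4 \cdot K_{n+2} + \p{-6 + 2 \cdot \alter n}
    = K_{n+4},
\end{align*}
where the last step is the first displayed identity. This establishes the claim.

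The argument is purely formal manipulation of the known second-order recurrence, so there is no real obstacle; the only thing to be careful about is checking the index ranges (both applications of \eqref{eq:KnKn2} require the argument to be $\ge 4$, which holds precisely because $n \ge 2$) and the parity bookkeeping $\alter{n+2} = \alter n$. Alternatively, one could note that $K_n - \tfrac{2^n}{6}$ satisfies a homogeneous linear recurrence and that $2^n$ is itself a solution of $x_{n+4} = 5 x_{n+2} - 4 x_n$ (characteristic roots $\pm 1, \pm 2$), but the direct elimination above is shortest.
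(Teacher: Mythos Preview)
Your proof is correct. You eliminate the inhomogeneous term $-6 + 2\alter{n}$ by subtracting two instances of the recurrence \eqref{eq:KnKn2}, which immediately gives $K_{n+4} - K_{n+2} = 4\,(K_{n+2} - K_n)$ and hence the claim; the index check $n \ge 2$ is exactly what is needed for both applications.

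The paper proceeds differently: it invokes the explicit closed form from corollary~\ref{corll:Kn}, namely $K_m = \tfrac{2^m + c}{6}$ with $c \in \{8,16\}$ depending only on the parity of $m$, and then verifies the linear identity $5\cdot\tfrac{2^{n+2}+c}{6} - 4\cdot\tfrac{2^{n}+c}{6} = \tfrac{2^{n+4}+c}{6}$ directly. Your route is more self-contained in that it only needs the inhomogeneous recurrence \eqref{eq:KnKn2} and avoids the detour through the Jacobsthal numbers and the closed form; it also makes transparent \emph{why} a homogeneous three-term recurrence emerges (the constant inhomogeneity cancels upon differencing). The paper's route, on the other hand, makes the characteristic roots $\pm 2$ and $\pm 1$ visible via the explicit formula, which is the observation you allude to in your final remark.
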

\begin{proof}
    Let $c \in \N$. The statement follows from corollary ~\ref{corll:Kn} and the observation that 
    \begin{align*}
        5 \cdot \frac{2^{n+2} + c}{6} - 
        4 \cdot \frac{2^{n  } + c}{6} = 
        \frac{5 \cdot 2^{n+2} - 2^{n+2} + c}{6} = 
        \frac{2^{n+4} + c}{6}.         
    \end{align*}
\end{proof}

\section{Colourability sources and sinks}

\begin{defn}
    Let $w \in \abc^*$. If $w \zero, w \one$ are not colourable, $w$ is called a \empar{colourability sink}; if $\zero w, \one w$ are not colourable, $w$ is called a \empar{colourability source}.
    $\upepsilon$ is called the \empar[trivial colourability sink and source]{trivial} colourability sink and source. 
\end{defn}

\begin{observ} \label{observ:reversesource}
    The reverse of a colourability source is a colourability sink and vice versa. 
\end{observ}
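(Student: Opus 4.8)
The plan is to reduce the statement to two elementary facts: how string reversal interacts with prepending versus appending a single letter, and the identity $\psi \circ R = -\psi$ recorded in \eqref{eq:psiCpsiR}. First I would note that for a single letter $a \in \abc$ and $w \in \abc^*$ one has $R \p{a w} = R \p w \, a$ and $R \p{w a} = a \, R \p w$, which is immediate from the definition of $R$ as string reversal. Together with \eqref{eq:psiCpsiR} this gives, for every $u \in \abc^*$, that $\psi \p u = 0$ if and only if $\psi \p{R \p u} = 0$; that is, colourability is invariant under $R$ (the sign is irrelevant for this).

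Now suppose $w$ is a colourability source, so that $\zero w$ and $\one w$ are not colourable, i.e. $\psi \p{\zero w} = \psi \p{\one w} = 0$. Applying the commutation fact and \eqref{eq:psiCpsiR}, $\psi \p{R \p w \, \zero} = \psi \p{R \p{\zero w}} = -\psi \p{\zero w} = 0$, and in the same way $\psi \p{R \p w \, \one} = 0$. Hence neither $R \p w \, \zero$ nor $R \p w \, \one$ is colourable, so $R \p w$ is a colourability sink.

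For the converse, assume $w$ is a colourability sink, so $\psi \p{w \zero} = \psi \p{w \one} = 0$. Then $\psi \p{\zero \, R \p w} = \psi \p{R \p{w \zero}} = -\psi \p{w \zero} = 0$ and likewise $\psi \p{\one \, R \p w} = 0$, so $R \p w$ is a colourability source. Since $R$ is an involution, the "vice versa" is now automatic, and the argument in fact shows the two assertions are equivalent.

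I do not expect any real obstacle here; the only points to keep straight are (i) to invoke $\psi \circ R = -\psi$ rather than the more delicate $\psi \circ C$ relation, and (ii) to remember that under $R$ a letter prepended on the left becomes a letter appended on the right and vice versa. The parity-dependent sign that appears in the $C$-identity and in $\phi \circ R$ never enters, because only the vanishing of $\psi$ — not its value — is at issue.
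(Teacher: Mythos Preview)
Your argument is correct. In the paper this statement is recorded as an observation without a written proof; it is taken as immediate from the definition of $R$ together with the symmetry $\psi \circ R = -\psi$ in \eqref{eq:psiCpsiR} (equivalently, from $\xi \circ R = -\xi$ in \eqref{eq:xiCxiR} combined with lemma~\ref{lemma:psi0xi0}). Your write-up simply makes those implicit steps explicit, so there is nothing to compare beyond noting that you have supplied the routine details the paper omits.
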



\begin{corll}
    Let $n \in \N$. There are equally many colourability sinks as sources in $\abc^n$. 
\end{corll}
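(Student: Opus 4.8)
The plan is to observe that the reversal map $R$, restricted to $\abc^n$, is a bijection that interchanges colourability sources and colourability sinks. First I would record two elementary facts about $R$: it preserves length (clear from the definition $R \p{w_0 \dots w_{\# w - 1}} = w_{\# w - 1} \dots w_0$), so $R$ maps $\abc^n$ into $\abc^n$; and $R \circ R = \mathrm{id}$, i.e. $R$ is an involution, so $R$ restricted to $\abc^n$ is a bijection of $\abc^n$ onto itself.

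Next I would invoke observation~\ref{observ:reversesource}: if $w \in \abc^n$ is a colourability source then $R \p w$ is a colourability sink, and if $w \in \abc^n$ is a colourability sink then $R \p w$ is a colourability source. Consequently $R$ carries the set of sources in $\abc^n$ into the set of sinks in $\abc^n$ and vice versa, and since these two restrictions of $R$ are mutually inverse (again because $R \circ R = \mathrm{id}$), each of them is a bijection between the two finite sets. Hence the number of colourability sources in $\abc^n$ equals the number of colourability sinks in $\abc^n$.

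There is essentially no obstacle: the statement is an immediate consequence of observation~\ref{observ:reversesource} together with the fact that $R$ is a length"~preserving involution. The only point that deserves to be written out is precisely that $R$ does not change the length of a string, which guarantees that the bijection stays within $\abc^n$ rather than merely mapping length"~$n$ strings to length"~$n$ strings in some larger ambient set.
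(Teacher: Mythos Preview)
Your proof is correct and follows essentially the same approach as the paper: both invoke observation~\ref{observ:reversesource} and the fact that $R$ is a bijection on $\abc^n$. You merely spell out in more detail why $R$ is such a bijection (length preservation plus $R \circ R = \mathrm{id}$), which the paper leaves implicit.
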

\begin{proof}
    If $w \in \abc^n$ is a colourability sink, $R \p w$ is a colourability source and vice versa. Since $R$ is bijective, the statement follows.
\end{proof}

\begin{lemma} \label{lemma:xi0wxi1wsink}
    If $w \in \abc^*$ is a colourability sink $\zero w, \one w$ are colourable. If $w$ is a colourability source $w \zero, w \one$ are colourable. 
\end{lemma}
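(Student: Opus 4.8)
The plan is to prove the statement for colourability sinks; the statement for sources then follows formally. Indeed, if $w$ is a colourability source, then $R \p w$ is a colourability sink by observation~\ref{observ:reversesource}, so, granting the sink case, $\zero R \p w$ and $\one R \p w$ are colourable; since $R \p{w \zero} = \zero R \p w$ and $R \p{w \one} = \one R \p w$, the identity $\xi \circ R = -\xi$ of equation~\eqref{eq:xiCxiR} together with lemma~\ref{lemma:psi0xi0} turns this into colourability of $w \zero$ and $w \one$. So assume from now on that $w$ is a colourability sink with $w \ne \upepsilon$, hence $\# w \ge 1$ and, by lemma~\ref{lemma:psi0xi0}, $\xi \p{w \zero} = \xi \p{w \one} = 0$. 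The goal is to show $\xi \p{\zero w} \ne 0$ and $\xi \p{\one w} \ne 0$.

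The key move is to use lemma~\ref{lemma:xi0xim0} on both ends of $w$. Because $\# w \ge 1$ we have $m \p{w \zero} = r \p w$, so $\xi \p{w \zero} = 0$ forces $\xi \p{r \p w} = 0$. Suppose now, towards a contradiction, that $\zero w$ is not colourable, i.e. $\xi \p{\zero w} = 0$. Since $m \p{\zero w} = l \p w$, lemma~\ref{lemma:xi0xim0} likewise gives $\xi \p{l \p w} = 0$. Thus $\xi \p{l \p w} = \xi \p{r \p w} = 0$, and lemma~\ref{lemma:xilr0} applied to $w$ forces
\begin{align*}
    w \in \crl \zero ^* \cup \crl \one ^* \cup \crl T ^\even \cup \crl{CT}^\even,
\end{align*}
the precise alternative being pinned down by the value of $\xi \p w$.

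Finally I would rule out each of these four shapes, contradicting in each case that $w$ is a sink; this is the only spot where a short computation enters, and it parallels the bookkeeping in lemma~\ref{lemma:ximw0w}. If $w = T^k$ with $k \ge 2$ even, then $\xi \p{w \one} = \xi \p{T^k \one} = 1$ by equation~\eqref{eq:xiTk1}; if $w = CT^k$ with $k \ge 2$ even, then $\xi \p{w \zero} = \xi \p{CT^k \zero} = -1$ by equation~\eqref{eq:xiCTk0}; if $w = \zero^k$ with $k \ge 1$, then $\xi \p{w \one} = \xi \p{\zero^k \one} = 1$ by equation~\eqref{eq:xi0i1j}; and if $w = \one^k$ with $k \ge 1$, then $\xi \p{w \zero} = \xi \p{\one^k \zero} = -1$ by equation~\eqref{eq:xi1j0i}. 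Each outcome contradicts $\xi \p{w \zero} = \xi \p{w \one} = 0$, so in fact $\xi \p{\zero w} \ne 0$; repeating the argument verbatim with $\one w$ in place of $\zero w$ gives $\xi \p{\one w} \ne 0$. The one non-mechanical ingredient is the two-sided application of lemma~\ref{lemma:xi0xim0}, which simultaneously annihilates $\xi$ on $l \p w$ and on $r \p w$ and so collapses the whole problem onto lemma~\ref{lemma:xilr0}; everything afterwards is routine case-checking.
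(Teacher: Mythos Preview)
Your argument is correct and rests on the same lemma~\ref{lemma:xilr0} as the paper, but the route is less direct. The paper applies lemma~\ref{lemma:xilr0} straight to the padded string $\zero w \zero$: since $l \p{\zero w \zero} = \zero w$ and $r \p{\zero w \zero} = w \zero$ both have $\xi$-value $0$, the lemma forces $\zero w \zero$ into one of the four families, and the fixed first and last letter immediately single out $\zero w \zero \in \crl \zero ^*$; hence $w \in \crl \zero ^*$ and $\xi \p{w \one} \ne 0$ by equation~\eqref{eq:xi0i1j}, contradicting the sink hypothesis (and symmetrically with $\one w \one$). Your detour through lemma~\ref{lemma:xi0xim0}, descending from $\xi \p{w \zero} = 0$ and $\xi \p{\zero w} = 0$ to $\xi \p{r \p w} = 0$ and $\xi \p{l \p w} = 0$, is valid but throws away the endpoint information, which is why you then have to eliminate all four families by hand, including $\crl T ^\even$ and $\crl{CT}^\even$, which the paper never sees. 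The paper's symmetric phrasing also dispatches sinks and sources in one stroke, whereas you reduce sources to sinks via reversal; both work, but the single application of lemma~\ref{lemma:xilr0} to the padded string is the cleaner move.
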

\begin{proof}
    Assume the contrary, say $\xi \p{w \zero} = \xi \p{\zero w} = 0$. Then by lemma~\ref{lemma:xilr0} $\zero w \zero \in \crlzero *$, so $w \in \crlzero *$. Hence $\xi \p{\one w}, \xi \p{w \one} \by[\ne]{\eqref{eq:xi0i1j}, \eqref{eq:xi1j0i}} 0,$ a contradiction. Similarly if $\xi \p{w \one} = \xi \p{\one w} = 0$ one gets that $\xi \p{\zero w}, \xi \p{w \zero} \ne 0$. 
\end{proof} 

\begin{corll} \label{corll:sinksource}
    The only string that is both a colourability sink and a colourability source is $\upepsilon$. 
\end{corll}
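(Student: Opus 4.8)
The plan is to split on whether $w$ is the empty string, so that the corollary comes out sharp. First I would verify that $\upepsilon$ genuinely is both a colourability sink and a colourability source (this is the reason the statement says ``the only string'' rather than ``no string''): since $\upepsilon\zero = \zero\upepsilon = \zero$ and $\upepsilon\one = \one\upepsilon = \one$, and $\psi\p{\zero} = \psi\p{\one} = 0$ by table~\ref{tab:psi}, neither $\zero$ nor $\one$ is colourable, so $\upepsilon$ meets both definitions.

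For the main direction I would suppose that some $w \in \abc^{\ge 1}$ is both a colourability sink and a colourability source. Being a sink means $w\zero$ and $w\one$ are not colourable; being a source means $\zero w$ and $\one w$ are not colourable. But lemma~\ref{lemma:xi0wxi1wsink}, applied to the sink property, says $\zero w$ and $\one w$ \emph{are} colourable --- a contradiction. (Equally well one can apply the other half of lemma~\ref{lemma:xi0wxi1wsink} to the source property to get that $w\zero$ and $w\one$ are colourable, contradicting the sink property; by observation~\ref{observ:reversesource} the two arguments are mirror images of one another.) Hence the only string that can be both is $w = \upepsilon$.

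The only delicate point --- and the reason the empty case must be peeled off first --- is that lemma~\ref{lemma:xi0wxi1wsink} tacitly needs $w$ to be non-empty: for $w = \upepsilon$ its conclusion would assert that $\zero$ and $\one$ are colourable, which is false. So the lemma must not be invoked until after $\upepsilon$ has been handled separately. Apart from keeping track of that case split, the argument is a one-line deduction from lemma~\ref{lemma:xi0wxi1wsink}, and I do not expect any real obstacle.
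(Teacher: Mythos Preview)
Your argument is correct and matches the paper's one-line proof, which simply says that any non-trivial colourability sink is, by lemma~\ref{lemma:xi0wxi1wsink}, not a colourability source. Your additional observation that lemma~\ref{lemma:xi0wxi1wsink} silently fails for $w = \upepsilon$ (despite being stated for $w \in \abc^*$) is a genuine catch --- the paper sidesteps this by inserting the word ``non-trivial'' in the corollary's proof, but never flags the discrepancy in the lemma itself.
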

\begin{proof}
    Any non-trivial colourability sink is by lemma~\ref{lemma:xi0wxi1wsink} not a colourability source. 
\end{proof}

\begin{remark}
    Corollary ~\ref{corll:sinksource} means that removing the non-colourable edges of a de Bruijn graph does not leave isolated vertices.
\end{remark}

\begin{lemma} \label{lemma:sinkcol}
    Any non-trivial colourability sink is colourable. 
\end{lemma}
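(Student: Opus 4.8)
The plan is to argue by strong induction on the length $\# w$, by contradiction: assuming $w$ were a non-trivial colourability sink that is not colourable, I would produce a strictly shorter non-trivial colourability sink that is likewise not colourable, and iterate down to a length at which no such object exists. The base of the induction is that there are no non-trivial colourability sinks of length $\le 1$: neither $\zero$ nor $\one$ is a sink, because $\zero \one$ and $\one \zero$ are colourable ($\xi \p{\zero \one} = 1$ and $\xi \p{\one \zero} = -1$ by lemma~\ref{lemma:xi0i1j} and corollary~\ref{corll:xi1j0i}, or simply table~\ref{tab:xi}), so the statement holds vacuously there.

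For the inductive step, let $\# w = n \ge 2$, assume the statement for all shorter strings, suppose $w\zero$ and $w\one$ are both non-colourable (equivalently $\xi \p{w\zero} = \xi \p{w\one} = 0$, by lemma~\ref{lemma:psi0xi0}), and suppose for a contradiction that $\xi \p w = 0$. I would then read off three facts about $r \p w$. First, since $\# \p{w\zero} \ge 2$ and $\xi \p{w\zero} = 0$, lemma~\ref{lemma:xi0xim0} gives $\xi \p{m \p{w\zero}} = 0$, and a direct index computation shows $m \p{w\zero} = r \p w$; hence $\xi \p{r \p w} = 0$. Second, applying observation~\ref{observ:xi0} to $w\zero$ — using $l \p{w\zero} = w$ and $r \p{w\zero} = r \p w \zero$ — gives $\xi \p w = -\xi \p{r \p w \zero}$, so from $\xi \p w = 0$ we obtain $\xi \p{r \p w \zero} = 0$. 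Third, the same reasoning with $w\one$ gives $\xi \p{r \p w \one} = 0$. Thus $r \p w$ is itself a colourability sink; it is non-trivial (its length is $n - 1 \ge 1$) and strictly shorter than $w$, yet $\xi \p{r \p w} = 0$ says it is not colourable, contradicting the induction hypothesis. Therefore $\xi \p w \ne 0$ and $w$ is colourable.

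The only point that requires a moment's thought is the identity $m \p{w\zero} = r \p w$ (and likewise $m \p{w\one} = r \p w$): it is precisely this that lets lemma~\ref{lemma:xi0xim0} deliver $\xi \p{r \p w} = 0$ essentially for free, and hence lets one see that the colourability-sink property is inherited by $r \p w$. I expect that to be the crux of the argument; once it is noticed, the remaining steps are routine manipulations with $l$, $r$ and observation~\ref{observ:xi0}, and the induction closes immediately.
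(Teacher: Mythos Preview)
Your argument is correct, but it takes a different route from the paper's. Both proofs begin identically: assuming $\xi\p w = \xi\p{w\zero} = 0$ and applying observation~\ref{observ:xi0} to $w\zero$ to obtain $\xi\p{r\p w\,\zero} = 0$. From there the paper does not induct. Having $\xi\p{l\p{w\zero}} = \xi\p w = 0$ and $\xi\p{r\p{w\zero}} = \xi\p{r\p w\,\zero} = 0$ together with $\xi\p{w\zero} = 0$, it invokes lemma~\ref{lemma:xilr0} (equation~\eqref{eq:xilr0xi0}) directly to force $w\zero \in \crlzero{*} \cup \crlone{*}$, hence $w \in \crlzero{*}$; the symmetric argument with $w\one$ gives $w \in \crlone{*}$, so $w = \upepsilon$.

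Your approach instead brings in lemma~\ref{lemma:xi0xim0} to get $\xi\p{r\p w} = 0$, packages $r\p w$ as a strictly shorter non-colourable sink, and closes by strong induction. This is perfectly valid and has the appeal of being self-contained once one notices the inheritance $m\p{w\zero} = r\p w$; on the other hand, lemma~\ref{lemma:xi0xim0} itself rests on lemma~\ref{lemma:xilr0}, so you are not really avoiding that structural result, just using it one level removed. The paper's direct appeal to lemma~\ref{lemma:xilr0} is shorter and yields the stronger conclusion $w = \upepsilon$ in one stroke, without the need for a base case or an inductive scaffold.
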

\begin{proof}
    Let $w \in \abc^*$ not be colourable. 
    \begin{align*}
        \xi \p w = 
        \xi \p{w \zero} = 
        0 
        \spaceby[\implies]{\eqref{eq:xi0}}{\eqref{eq:xilr0xi0}}
        &\xi \p w = 
        \xi \p{r \p w \zero} = 
        \xi \p{w \zero} = 
        0 
    \\
        \by[\implies]{\eqref{eq:xilr0xi0}}
        &w \zero \in \crlzero * \cup \crlone *
    \\
        \spaceby[\implies]{}{\eqref{eq:xilr0xi0}}
        &w \in \crlzero *   
    \end{align*}
    but similarly
    \begin{align*}
        \xi \p w = 
        \xi \p{w \one} = 
        0 
        \implies 
        w \in \crlone *,
    \end{align*}
    so $w = \upepsilon$. 
\end{proof}

\begin{corll} \label{corll:sourcecol}
    Any non-trivial colourability source is colourable. 
\end{corll}
\begin{proof}
    Since the reverse of a colourability source is a colourability sink, the statement follows directly from lemmata \ref{lemma:xiCxiR} and \ref{lemma:sinkcol}.
\end{proof}

\begin{lemma} \label{lemma:sinksourcexipsi}
    If $w \in \abc*$ is a colourability sink,
    \begin{align}
        \xi \p{\zero w} &= 
        \xi \p{\one w} = 
        \xi \p w 
        \label{eq:xi0sinkxi1sink}
    \\ 
        \phi \p{\zero w} &= 
        \phi \p{\one w} = 
        \phi \p w
        \label{eq:phi0sinkphi1sink}
    \\ 
        \psi \p{\zero w} &= 
        \psi \p{\one w} = 
        \xi \p w \cdot \psi \p w
        \label{eq:psi0sinkpsi1sink}.
    \end{align}
    If $w$ is a colourability source,
    \begin{align}
        \xi \p{w \zero} &= 
        \xi \p{w \one} = 
        \xi \p w 
        \label{eq:xisource0xisource1}
    \\ 
        \phi \p{w \zero} &= 
        \phi \p{w \one} = 
        -\phi \p w
        \label{eq:phisource0phisource1}
    \\ 
        \psi \p{w \zero} &= 
        \psi \p{w \one} = 
        -\xi \p w \cdot \psi \p w
        \label{eq:psisource0psisource1}.
    \end{align}
\end{lemma}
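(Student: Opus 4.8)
The plan is to establish the colourability sink case directly from the suffix compatibility of $\xi$ and $\phi$, and then to read off the colourability source case using the reversal relations. So let $w$ be a non-trivial colourability sink. By lemma~\ref{lemma:sinkcol} $w$ is itself colourable, so $\xi \p w \ne 0$ and $\psi \p w \ne 0$, and since $\psi \p w = \p{\xi \p w}^{\# w} \cdot \phi \p w$ this forces $\phi \p w \ne 0$. By lemma~\ref{lemma:xi0wxi1wsink} both $\zero w$ and $\one w$ are colourable, so $\xi \p{\zero w}, \xi \p{\one w}, \phi \p{\zero w}, \phi \p{\one w}$ are all non-zero.

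The observation that drives the sink case is $r \p{\zero w} = r \p{\one w} = w$. Applying \eqref{eq:xixil} to $\zero w$, which is legitimate because $\xi \p{\zero w} \ne 0$, and using $\xi \p{r \p{\zero w}} = \xi \p w \ne 0$, I obtain $\xi \p{\zero w} = \xi \p{r \p{\zero w}} = \xi \p w$; the identical step for $\one w$ yields \eqref{eq:xi0sinkxi1sink}. In the same way \eqref{eq:phiphir} applied to $\zero w$, which is legitimate because $\phi \p{\zero w} \ne 0$, together with $\phi \p{r \p{\zero w}} = \phi \p w \ne 0$, gives $\phi \p{\zero w} = \phi \p w$, and likewise for $\one w$; this is \eqref{eq:phi0sinkphi1sink}. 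Then \eqref{eq:psi0sinkpsi1sink} is immediate from the definition of $\psi$ together with $\# \p{\zero w} = \# w + 1$ and $\p{\xi \p w}^{\# w + 1} = \xi \p w \cdot \p{\xi \p w}^{\# w}$.

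For the source case, let $w$ be a non-trivial colourability source. By observation~\ref{observ:reversesource} its reverse $R \p w$ is a non-trivial colourability sink, so the sink case applies to $R \p w$, and since $R \p{w \zero} = \zero \, R \p w$ and $R \p{w \one} = \one \, R \p w$, each identity transports through a reversal relation: $\xi \circ R = -\xi$ from \eqref{eq:xiCxiR}, $\p{\phi \circ R}\p v = \p{-1}^{\# v + 1}\phi \p v$ from \eqref{eq:phiCphiR}, and $\psi \circ R = -\psi$ from \eqref{eq:psiCpsiR}. For instance $\xi \p{w \zero} = -\xi \p{R \p{w \zero}} = -\xi \p{\zero \, R \p w} = -\xi \p{R \p w} = \xi \p w$, which is \eqref{eq:xisource0xisource1}. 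For $\phi$, the length of $w \zero$ exceeds that of $w$ by one, so the two applications of \eqref{eq:phiCphiR} contribute opposite signs and produce the minus in \eqref{eq:phisource0phisource1}; and \eqref{eq:psisource0psisource1} follows either from $\psi \p{w \zero} = -\psi \p{\zero \, R \p w}$ together with \eqref{eq:psi0sinkpsi1sink}, or directly from $\psi \p v = \p{\xi \p v}^{\# v} \cdot \phi \p v$ as before.

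The argument is essentially bookkeeping and I do not expect a genuine obstacle; the only real input is the pair of facts, lemmata~\ref{lemma:sinkcol} and \ref{lemma:xi0wxi1wsink}, which guarantee that $w$, $\zero w$ and $\one w$ are simultaneously colourable, exactly the hypothesis needed to invoke \eqref{eq:xixil} and \eqref{eq:phiphir}. Two small points to watch when writing it out: the statement is intended for non-trivial sinks and sources, since for $w = \upepsilon$ the $\phi$ identity fails already because $\phi \p \zero = -1 \ne 1 = \phi \p \one$; and the exponent manipulation $\p{\xi \p w}^{\# w + 1} = \xi \p w \cdot \p{\xi \p w}^{\# w}$ is the one step shared by both halves.
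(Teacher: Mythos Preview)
Your proof is correct. For the sink case you argue exactly as the paper does: lemma~\ref{lemma:sinkcol} and lemma~\ref{lemma:xi0wxi1wsink} supply the non-vanishing needed to invoke \eqref{eq:xixil} and \eqref{eq:phiphir} through $r\p{\zero w}=r\p{\one w}=w$, and the $\psi$ identity drops out of the definition via the exponent shift.

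The one genuine difference is in the source case. The paper treats sources in parallel with sinks: it uses corollary~\ref{corll:sourcecol} and the second half of lemma~\ref{lemma:xi0wxi1wsink} to get $w$, $w\zero$, $w\one$ colourable, and then applies the $l$-versions \eqref{eq:xixir} and \eqref{eq:phiphil} directly through $l\p{w\zero}=l\p{w\one}=w$; in particular \eqref{eq:phiphil} only gives $\phi\p{w\zero}\ne\phi\p w$, and one concludes $\phi\p{w\zero}=-\phi\p w$ because both values lie in $\crl{-1,1}$. You instead transport the sink case through $R$ using \eqref{eq:xiCxiR}, \eqref{eq:phiCphiR}, \eqref{eq:psiCpsiR}. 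Your route avoids citing corollary~\ref{corll:sourcecol} separately and makes the sign in \eqref{eq:phisource0phisource1} appear from the length-parity mismatch in \eqref{eq:phiCphiR}; the paper's route is more symmetric and slightly shorter. Both are fine.

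Your remark that the $\phi$ identity fails for the trivial sink $w=\upepsilon$ (since $\phi\p\zero=-1\ne 1=\phi\p\one$) is correct and worth noting; the paper's proof tacitly restricts to the non-trivial case by invoking lemma~\ref{lemma:sinkcol}, and the only downstream use (theorem~\ref{thrm:psiw0psiw1}) needs just the $\psi$ identity, which does survive at $\upepsilon$.
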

\begin{proof}
    Consider lemma~\ref{lemma:xi0wxi1wsink},  lemma~\ref{lemma:sinkcol} and corollary~\ref{corll:sourcecol}. 
    For \eqref{eq:xi0sinkxi1sink} and \eqref{eq:xisource0xisource1} combine them with lemma~\ref{lemma:xixil}, for \eqref{eq:phi0sinkphi1sink} and \eqref{eq:phisource0phisource1} with lemma~\ref{lemma:phiphir}.  
    The following establishes \eqref{eq:psi0sinkpsi1sink}:
    \begin{align*}
        \begin{split}
            \psi \p{\zero w} &= 
            \p{\xi \p{\zero w}}^{\# w + 1} \cdot 
            \phi \p{\zero w} 
            \by{\eqref{eq:xi0sinkxi1sink}}
        \\ 
            \psi \p{\one w} &= 
            \p{\xi \p{\one w}}^{\# w + 1} \cdot 
            \phi \p{\one w} 
            \by{\eqref{eq:phi0sinkphi1sink}}
        \end{split}
        \p{\xi \p w}^{\# w + 1} \cdot 
        \phi \p w 
        = 
        \xi  \p w \cdot 
        \psi \p w
    \end{align*}
    and the following \eqref{eq:psisource0psisource1}: 
    \begin{align*}
        \begin{split}
            \psi \p{w \zero} 
            &= 
            \p{\xi \p{w \zero}}^{\# w + 1} \cdot 
            \phi \p{w \zero} 
            \by{\eqref{eq:xisource0xisource1}}
        \\ 
            \psi \p{w \one} 
            &= 
            \p{\xi \p{w \one}}^{\# w + 1} \cdot 
            \phi \p{w \one} 
            \by{\eqref{eq:phisource0phisource1}}
        \end{split}  
        \p{\xi \p w}^{\# w + 1} \cdot 
        \p{-\phi \p w}
        = 
        -\xi \p w \cdot 
        \psi \p w.       
    \end{align*}
\end{proof}

\begin{thrm} \label{thrm:psiw0psiw1}
    Let $w \in \abc^*$. Then 
    \begin{align*}
        w \zero, w \one 
        \text{ are colourable} 
        &\implies
        \psi \p{w \zero} = 
        \psi \p{w \one}
    \\ 
        \zero w, \one w
        \text{ are colourable} 
        &\implies
        \psi \p{\zero w} = 
        \psi \p{\one w}
    \end{align*}
\end{thrm}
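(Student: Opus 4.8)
The plan is to establish the first implication, $w \zero, w \one \text{ colourable} \implies \psi \p{w \zero} = \psi \p{w \one}$, directly, and then obtain the second one for free by reversal: since $R \p{w \zero} = \zero R \p w$ and $R \p{w \one} = \one R \p w$, since $R$ is an involution, and since $\psi \circ R = -\psi$ by \eqref{eq:psiCpsiR} (so $R$ preserves colourability), the second implication for $w$ is exactly the first implication for $R \p w$. Throughout I would use that $\psi \p v = \p{\xi \p v}^{\# v} \phi \p v$ and that $v$ is colourable iff $\xi \p v \ne 0$, in which case also $\phi \p v \ne 0$ (theorem~\ref{thrm:phi0xi0}); also $l \p{w \zero} = l \p{w \one} = w$, so the idea is to compare $\xi$ and $\phi$ of $w \zero$ and $w \one$ against those of $w$ via lemmata~\ref{lemma:xixir} and \ref{lemma:phiphir}.

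I would split on the parity of $\# w$ and, when $\# w$ is even, on whether $\xi \p w = 0$. If $\# w$ is odd, then $\phi \p w \ne 0$ by corollary~\ref{corll:phiodd}, so \eqref{eq:phiphil} applied to $w \zero$ and to $w \one$ gives $\phi \p{w \zero} = \phi \p{w \one} = -\phi \p w$; since $\# \p{w \zero}$ is even, \eqref{eq:psiphi} turns this into $\psi \p{w \zero} = \psi \p{w \one}$. If $\# w$ is even and $\xi \p w \ne 0$, then $\phi \p w \ne 0$ as well (theorem~\ref{thrm:phi0xi0}), so \eqref{eq:xixir} gives $\xi \p{w \zero} = \xi \p{w \one} = \xi \p w$ and \eqref{eq:phiphil} gives $\phi \p{w \zero} = \phi \p{w \one} = -\phi \p w$, whence $\psi \p{w \zero} = \psi \p{w \one}$ immediately.

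The remaining, and principal, case is $\# w$ even with $\xi \p w = 0$, so $\phi \p w = 0$ too (theorem~\ref{thrm:phi0xi0}). Here I would first record that $w \notin \crl{\zero}^* \cup \crl{\one}^*$ — otherwise $w \zero$ (if $w \in \crl{\zero}^*$) or $w \one$ (if $w \in \crl{\one}^*$) would be a palindrome, hence not colourable by corollary~\ref{corll:xipalindrome} — and that $w \notin \crl{T}^\even \cup \crl{CT}^\even$, where $\xi$ is nonzero. From $\xi \p w = 0$, observation~\ref{observ:xi0} gives $\xi \p{l \p w} = -\xi \p{r \p w}$, and these cannot both vanish, for then lemma~\ref{lemma:xilr0} would force $w \in \crl{\zero}^* \cup \crl{\one}^*$; hence $\xi \p{r \p w} \ne 0$. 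Symmetrically, from $\phi \p w = 0$ via observation~\ref{observ:phi0} and lemma~\ref{lemma:philwrw0} one gets $\phi \p{r \p w} \ne 0$. Now $r \p{w \zero} = \p{r \p w}\zero$, and $w \zero$ lies in neither $\crl{T}^\even \cup \crl{CT}^\even$ (odd length) nor $\crl{\zero}^\odd \cup \crl{\one}^\odd$ (since $w \notin \crl{\zero}^* \cup \crl{\one}^*$), so the defining recursions collapse: $\xi \p{w \zero} = \xi \p{\p{r \p w}\zero}$ and $\phi \p{w \zero} = \phi \p{\p{r \p w}\zero}$, both nonzero. Applying \eqref{eq:xixir} to the string $\p{r \p w}\zero$ (whose $l$-image is $r \p w$, with $\xi \p{r \p w} \ne 0$) yields $\xi \p{w \zero} = \xi \p{r \p w}$, and applying \eqref{eq:phiphil} to $\p{r \p w}\zero$ yields $\phi \p{w \zero} = -\phi \p{r \p w}$. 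The identical computation with $\one$ in place of $\zero$ gives $\xi \p{w \one} = \xi \p{r \p w}$ and $\phi \p{w \one} = -\phi \p{r \p w}$, so $\psi \p{w \zero} = \p{\xi \p{r \p w}}^{\# w + 1}\p{-\phi \p{r \p w}} = \psi \p{w \one}$.

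The main obstacle is this last case. It is not conceptually deep, but it requires carefully ruling out membership of $w$ (and of $w \zero$, $w \one$) in the exceptional sets $\crl{T}^\even, \crl{CT}^\even, \crl{\zero}^*, \crl{\one}^*$ so that the recursions for $\xi$ and $\phi$ genuinely simplify, and then invoking lemmata~\ref{lemma:xixir} and \ref{lemma:phiphir} with the correct string, namely $\p{r \p w}\zero$ (resp.\ $\p{r \p w}\one$), in the role of the lemma's argument. The bookkeeping — tracking which of $\xi \circ l$, $\xi \circ r$, $\phi \circ l$, $\phi \circ r$ vanish and which exceptional memberships have been excluded — is where all the care is needed.
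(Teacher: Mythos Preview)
Your proof is correct; every case is handled and the bookkeeping in the principal case (even $\#w$, $\xi\p w = 0$) goes through as you describe. However, your route is genuinely different from the paper's. The paper does not split on the parity of $\#w$ or on whether $\xi\p w = 0$ at all. Instead it makes a single dichotomy: if $w$ is a colourability sink or source, lemma~\ref{lemma:sinksourcexipsi} already gives both conclusions; otherwise at least one of $\zero w, \one w$ (resp.\ $w\zero, w\one$) is colourable, and applying corollary~\ref{corll:psilpsir} to the two strings $\zero w\zero$ and $\zero w\one$ (say) forces $\psi\p{w\zero}$ and $\psi\p{w\one}$ both to equal $-\psi\p{\zero w}$. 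What you gain is self-containment: your argument uses only the elementary lemmata on $\xi$ and $\phi$ from sections~\ref{section:xi}--\ref{section:phi}, never touching theorem~\ref{thrm:psilllpsirrr}, corollary~\ref{corll:psilpsir} or the sink/source machinery, so in principle the theorem could be placed much earlier. What the paper gains is brevity---two sentences---by cashing in on the alternation property that has already been established.
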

\begin{proof}
    If $w$ is a colourability sink or source, the statement is proven in lemma~\ref{lemma:sinksourcexipsi}. Otherwise it follows from corollary~\ref{corll:psilpsir}.
\end{proof}

\begin{figure}
    \centering
    \includegraphics[height=\graphicheight]{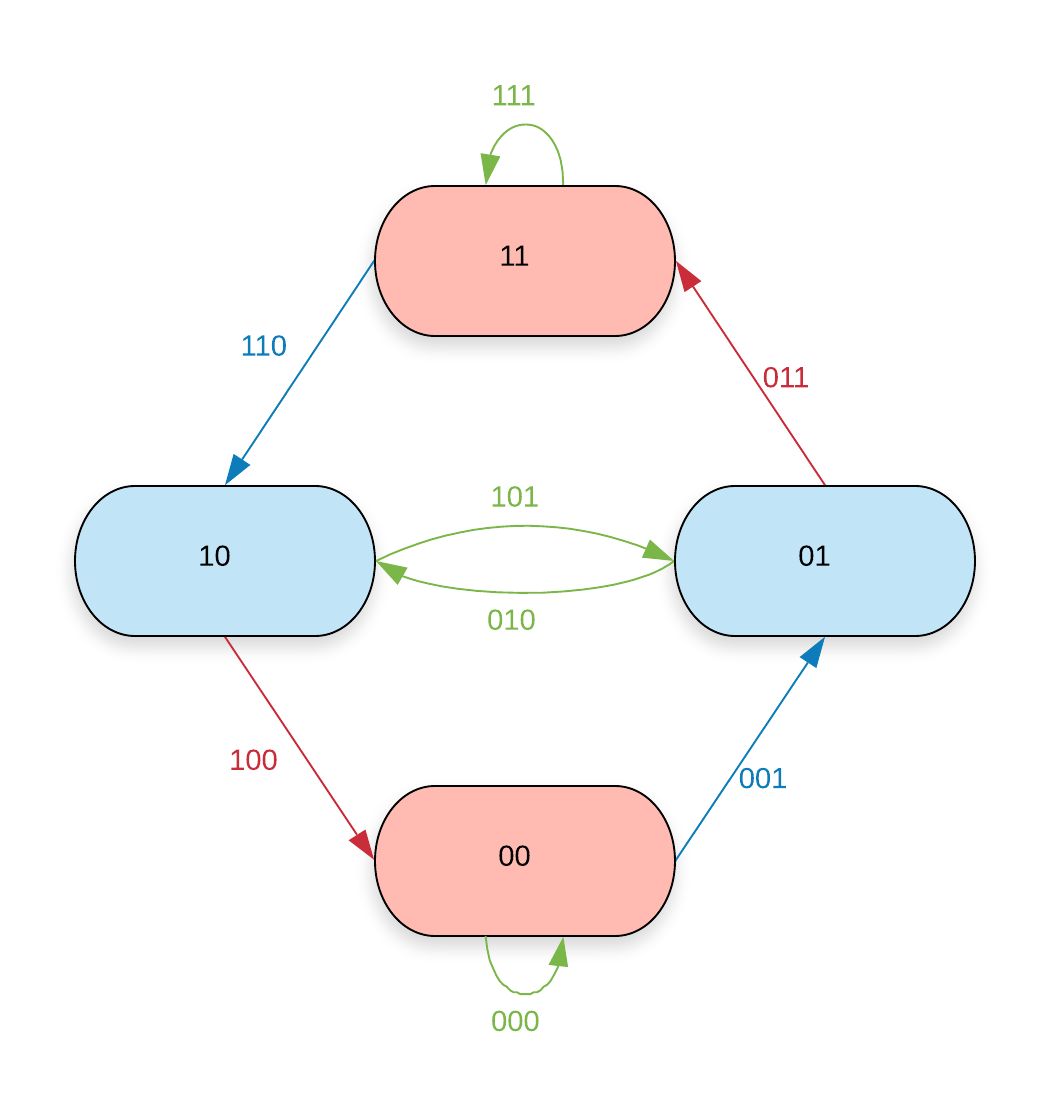}
    \caption{The $2$-dimensional de Bruijn graph. As in figure~\ref{fig:deBruijn2psi} the edges are coloured according to the value $\psi$ assigns to them. The colour of the vertices coincides with the colour of the incoming edges and differs from the colour of the outgoing ones.}
    \label{fig:deBruijn2psivertices}
    \hrulefill
\end{figure}
\begin{figure}
    \centering
    \includegraphics[height=\textwidth]{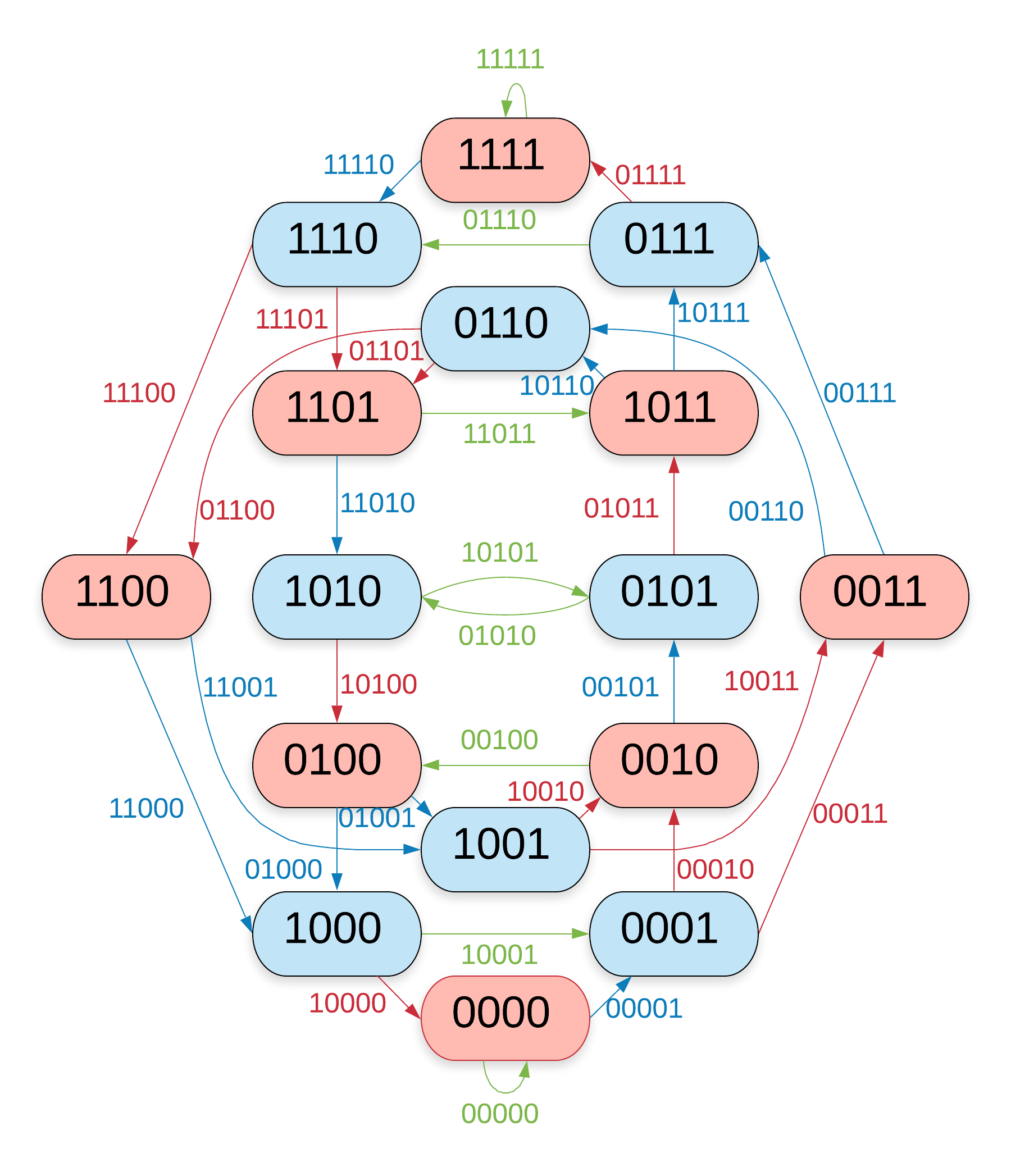}
    \caption{The $4$-dimensional de Bruijn graph. As in figure~\ref{fig:deBruijn4psi} the edges are coloured according to the value $\psi$ assigns to them. The colour of the vertices coincides with the colour of the incoming edges and differs from the colour of the outgoing ones.}
    \label{fig:deBruijn4psivertices}
    \hrulefill
\end{figure}

\begin{remark}
    Theorem~\ref{thrm:psiw0psiw1} makes it possible to define a function that assigns to the vertices of a de Bruijn graph a colour that corresponds to the colour of the incoming edges while differing from the colour of the outgoing ones. Lemma~\ref{lemma:phiphir} and corollary~\ref{corll:phiodd} show that for odd-dimensional de Bruijn graphs that function is just $\phi$. Figures~\ref{fig:deBruijn2psivertices} and \ref{fig:deBruijn4psivertices} give examples for even-dimensional de Bruijn graphs. 
\end{remark}

\begin{observ}
    In $\abc^{\le 4}$ there are no non-trivial colourability sinks or sources. 
\end{observ}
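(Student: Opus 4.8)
The plan is to reduce the statement entirely to observation~\ref{observ:coliffpalin}. First I would observe that for $w \in \abc^{\le 4}$ the strings $w\zero$, $w\one$, $\zero w$, $\one w$ all have length at most $5$, so by observation~\ref{observ:coliffpalin} (together with lemma~\ref{lemma:psi0xi0}, which lets us read ``not colourable'' as $\psi = 0$, equivalently $\xi = 0$) non-colourability of any of these four strings is the same as it being a palindrome. Thus it suffices to show that no non-trivial $w \in \abc^{\le 4}$ has both $w\zero, w\one$ palindromes, and likewise no non-trivial such $w$ has both $\zero w, \one w$ palindromes.

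For the sink case, let $w \in \abc^{\ge 1}$ with $\# w \le 4$ and suppose both $w\zero$ and $w\one$ are non-colourable, hence palindromes. Writing $w = w_0 \cdots w_{\# w - 1}$, the first letter of $w\zero$ is $w_0$ and its last letter is $\zero$, so being a palindrome forces $w_0 = \zero$; applying the same reasoning to the palindrome $w\one$ forces $w_0 = \one$, a contradiction. Hence the only colourability sink in $\abc^{\le 4}$ is $\upepsilon$, i.e. there is no non-trivial one.

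For the source case I would invoke observation~\ref{observ:reversesource}: the reverse of a colourability source is a colourability sink. Since $R$ preserves length, is injective, and satisfies $R\p\upepsilon = \upepsilon$, a non-trivial colourability source in $\abc^{\le 4}$ would produce a non-trivial colourability sink in $\abc^{\le 4}$, contradicting the previous paragraph. (One could equally well mirror the sink argument directly, reading off the first and last letters of the palindromes $\zero w$ and $\one w$.) There is no real obstacle in this proof; the only point deserving care is the length bookkeeping — it is precisely the hypothesis $\# w \le 4$ that keeps $w\zero$ and $w\one$ short enough for observation~\ref{observ:coliffpalin} to apply, which is also why the statement is phrased for $\abc^{\le 4}$ rather than for arbitrary strings.
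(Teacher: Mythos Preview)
Your argument is correct. The paper states this as an observation without proof, presumably because for $\abc^{\le 4}$ one can simply inspect the finitely many strings (or their $\xi$-values, already tabulated for small lengths). Your route via observation~\ref{observ:coliffpalin} is a genuinely different and more conceptual one: instead of case-checking, you translate non-colourability of $w\zero$ and $w\one$ into both being palindromes, and then read off the contradiction $w_0 = \zero = \one$ from the first and last letters. This buys you a proof that explains \emph{why} the bound is $4$ (it is exactly what keeps $w\zero, w\one$ within reach of observation~\ref{observ:coliffpalin}), and the reduction of sources to sinks via observation~\ref{observ:reversesource} is the right way to avoid repeating the work. The parenthetical appeal to lemma~\ref{lemma:psi0xi0} is harmless but unnecessary, since observation~\ref{observ:coliffpalin} already speaks directly of colourability.
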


\begin{thrm} \label{thrm:Sn}
    For $n \in \N$ let $S_n$ be the number of colourability sinks in $\abc^n$. If $n \ge 3$, $$S_n = K_n - 4.$$ 
\end{thrm}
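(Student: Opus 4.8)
The plan is to count colourability sinks in $\abc^n$ by relating them to the non-colourable strings counted by $K_n$. The key structural fact is lemma~\ref{lemma:sinkcol}: every non-trivial colourability sink is itself colourable, i.e. $\xi \p w \ne 0$. So a sink $w \in \abc^n$ has $\xi \p w \ne 0$ while both $w \zero$ and $w \one$ have $\xi$-value $0$. I want to turn this into a bijection-type argument between sinks and some easily countable subset of non-colourable strings of length $n+1$, or alternatively length $n$.

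First I would look at the map $w \mapsto w \zero$ from colourability sinks in $\abc^n$ into non-colourable strings in $\abc^{n+1}$ (and likewise $w \mapsto w \one$). This is clearly injective. The real question is which non-colourable strings of length $n+1$ arise this way, i.e. which $u \in \abc^{n+1}$ with $\xi \p u = 0$ have $l \p u$ colourable and $l \p u \one = r\text{-complement partner}$ also non-colourable. By observation~\ref{observ:xi0}, $\xi \p u = 0$ forces $\xi \p{l \p u} = -\xi \p{r \p u}$; if $l \p u$ is to be colourable then $\xi \p{l \p u} \ne 0$, hence $\xi \p{r \p u} \ne 0$ too. So I would instead characterize non-colourable $u \in \abc^{n+1}$ according to whether $\xi \p{l \p u}$ and $\xi \p{r \p u}$ vanish. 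Lemma~\ref{lemma:ximw0w} (via lemma~\ref{lemma:xilr0}) tells us exactly which strings have both $l$ and $r$ images of $\xi$-value $0$: they are the words in $\crl{\zero^{n+1}, \one^{n+1}}$, the words $\zero^n \one$, $\one^n \zero$, $\zero \one^n$, $\one \zero^n$, and (for the appropriate parity) the words built from $T$, $CT$ — altogether the strings directly tied to the $K_{n+1} - $(something) bookkeeping. A colourability sink $w$ of length $n$, appended with either letter, yields exactly a non-colourable string of length $n+1$ whose proper prefix $w$ is colourable; these are precisely the non-colourable strings of length $n+1$ that are \emph{not} of the "both-sides-zero" type listed above.

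So the cleaner route: directly count $\#\crl{w \in \abc^n; \xi \p w \ne 0 \land \xi \p{w\zero} = \xi \p{w\one} = 0}$. For such $w$, lemma~\ref{lemma:xixil}/\ref{lemma:xixir} gives constraints; in fact $\xi \p{w \zero} = 0$ with $\xi \p w \ne 0$ forces, by observation~\ref{observ:xi0}, $\xi \p{r \p w \zero} = -\xi \p w$, and similarly from $\xi\p{w\one}=0$. I would pin down that this forces $w$ to lie in a small explicit family, then check the count equals $K_n - 4$. The $-4$ should come from the four "corner" non-colourable strings of length $n$ — namely $\zero^n, \one^n$ and the two length-$n$ alternating-type palindromes (or $\zero^{n-1}\one$-type words) — which correspond to prefixes that are \emph{not} sinks or to sinks that are the trivial one counted differently. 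Concretely, I expect: $K_n$ counts all non-colourable strings of length $n$; among these, $4$ of them fail to be "$w\zero, w\one$ non-colourable for a colourable $w$" in the sink sense, and the remaining $K_n - 4$ are in bijection with sinks via $w \mapsto w\zero$ restricted appropriately — or one argues each sink $w$ produces exactly one "new" non-colourable string not already accounted for by the four exceptions, using that $w\zero$ and $w\one$ are distinct and exactly one lies in the relevant set.

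The main obstacle will be getting the bijection and the constant $-4$ exactly right: I must identify which non-colourable strings of length $n$ (or $n+1$) are \emph{not} hit, show there are precisely four of them independent of $n \ge 3$, and verify the map is a bijection onto the rest. This is essentially an inclusion–exclusion over the cases of lemma~\ref{lemma:ximw0w}, combined with lemma~\ref{lemma:xi0wxi1wsink} (which guarantees a sink $w$ makes $\zero w, \one w$ colourable, so $w$ is not also constrained on the left) and lemma~\ref{lemma:sinkcol}. I would set up $S_n$ via $S_n = \#\crl{w \in \abc^n ; w \text{ colourable}, \; \xi\p{w\zero}=\xi\p{w\one}=0} + [\upepsilon\text{-type correction}]$, expand using observation~\ref{observ:xi0} and lemma~\ref{lemma:xixil}, and match term by term against the expansion of $K_n$ from corollary~\ref{corll:KnKn2}/theorem~\ref{thrm:Kn2}, so that the four excluded non-colourable words ($\zero^n$, $\one^n$, and the two words of the form $\zero^{n-1}\one$-type or the alternating palindrome, whichever have no colourable proper prefix closing up as a sink) account for the difference.
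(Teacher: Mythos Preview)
Your proposal has a genuine gap. You are trying to set up a bijection between sinks and non-colourable strings of length $n$, but by lemma~\ref{lemma:sinkcol} every non-trivial sink is \emph{colourable}, so these two sets are disjoint; no such bijection exists. The equality $S_n = K_n - 4$ is purely numerical, not a ``remove four exceptions from $K_n$'' statement.

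The key reduction you are missing is this: if $w$ is a sink then $\xi\p{w\zero} = 0$, so by lemma~\ref{lemma:xi0xim0} applied to $w\zero$ one gets $\xi\p{m\p{w\zero}} = \xi\p{r\p w} = 0$. Thus every sink lies in $\crl{w \in \abc^n : \xi\p{r\p w} = 0}$, a set of size exactly $2K_{n-1}$. Now lemma~\ref{lemma:ximw0w} (applied to $w\zero$ and $w\one$, both of which have middle $r\p w$) tells you precisely which $w$ with $\xi\p{r\p w} = 0$ fail to be sinks: an explicit list whose length depends on the parity of $n$. For $n$ odd the list has $4$ elements, giving $S_n = 2K_{n-1} - 4$, and corollary~\ref{corll:Kn1Kneven} gives $2K_{n-1} = K_n$. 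For $n$ even the list has $8$ elements, and $2K_{n-1} - 8 = K_n - 4$ only after plugging in the closed forms from corollary~\ref{corll:Kn}. So the ``$-4$'' is not four excluded strings independent of parity, as you conjectured; it is the outcome of arithmetic with the explicit $K$-formulas. Your attempt to locate four universal corner strings would not close.
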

\begin{proof}
    Let $n \ge 3$ and $w \in \abc^n$ be a colourability sink. Then by lemma~\ref{lemma:xi0xim0} $\cp \xi r w = 0$. 
    First, let $n$ be odd. By lemma~\ref{lemma:ximw0w} 
    \begin{align*}
        \xi \p{w \zero} = 
        \xi \p{w \one} = 
        0 \iff
        \cp \xi r w = 0 \land 
        w \notin \crl{
            \zero^n, 
            \zero \one^{n-1},
            \one \zero^{n-1},
            \one^n
        },
    \end{align*}
    so 
    \begin{align*}
        S_n = 
        \# \crl{
            w \in \abc^n;
            \cp \xi r w = 0
        } - 4 = 
        2 \cdot K_{n-1} - 4 
        \by{\eqref{eq:Kn1Kneven}}
        K_n - 4.
    \end{align*}
    Now let $n$ be even. Then by lemma~\ref{lemma:ximw0w} 
    \begin{align*}
        &\xi \p{w \zero} = 
        \xi \p{w \one} = 
        0 
    \\
        \iff
        &\cp \xi r w = 0 \land 
        w \notin \crl{
            \zero^n, 
            \zero T^{n-1},
            T^n,
            \zero \one^{n-1},
            \one \zero^{n-1},
            CT^n,
            \one CT^{n-1},
            \one^n
        },      
    \end{align*}
    so 
    \begin{align*}
        S_n 
        \by{\phantom{\eqref{eq:Kneven}}}&
        \# \crl{
            w \in \abc^n;
            \cp \xi r w = 0
        } - 8 
    \\
        \by{\phantom{\eqref{eq:Kneven}}}&
        2 \cdot K_{n-1} - 8 
    \\
        \by{\eqref{eq:Knodd}}&
        2 \cdot \frac{2^{n-1} + 16}{6} - 8 
    \\
        \by{\phantom{\eqref{eq:Kneven}}}& 
        \frac{2^n + 8}{6} - 4 
    \\
        \by{\eqref{eq:Kneven}}  &
        K_n - 4. 
    \end{align*}
\end{proof}

\begin{table}
    \centering
    \begin{tabular}{c|rrrrrrrrrrrr}
        $n$ & 0 & 1 & 2 & 3 & 4 & 5 & 6 & 7 & 8 & 9 & 10 & 11 
    \\ \hline
        $S_n$ & 1 & 0 & 0 & 0 & 0 & 4 & 8 & 20 & 40 & 84 & 168 & 340
    \end{tabular}
    \caption{The number $S_n$ of colourability sinks in $\abc^n$. There are equally many colourability sources as sinks.}
    \label{tab:S}
\hrulefill\end{table}

\begin{remark}
    Theorem~\ref{thrm:Sn} is a little bit more surprising than it looks at first glance: Due to the definition of a colourability sink focusing on $w \zero$ and $w \one$, $S_n$ rather says something about $\abc^{n+1}$ than about $\abc^n$. $K_n$ in contrast should be seen as an information about $\abc^n$. 
\end{remark}
%
%
        \printbibliography
\end{document}